\newif\ifpersonal
\numberwithin{equation}{section}
\theoremstyle{plain}
\newtheorem{theorem}[equation]{Theorem}
\newtheorem{lemma}[equation]{Lemma}
\newtheorem*{claim*}{Claim}
\newtheorem{proposition}[equation]{Proposition}
\theoremstyle{definition}
\newtheorem{definition}[equation]{Definition}
\newtheorem{remark}[equation]{Remark}
\newcommand{\personal}[1]{\textcolor[rgb]{0,0,1}{(Personal: #1)}}
\newcommand{\personal}[1]{\ignorespaces}
\providecommand{\abs}[1]{\lvert#1\rvert}
\newcommand{\bbB}{\mathbb B}
\newcommand{\bbC}{\mathbb C}
\newcommand{\bbN}{\mathbb N}
\newcommand{\bbQ}{\mathbb Q}
\newcommand{\bbR}{\mathbb R}
\newcommand{\bbT}{\mathbb T}
\newcommand{\bbU}{\mathbb U}
\newcommand{\bbZ}{\mathbb Z}
\newcommand{\rC}{\mathrm C}
\newcommand{\rH}{\mathrm H}
\newcommand{\rR}{\mathrm R}
\newcommand{\fX}{\mathfrak X}
\newcommand{\fY}{\mathfrak Y}
\newcommand{\cF}{\mathcal F}
\newcommand{\cI}{\mathcal I}
\newcommand{\cM}{\mathcal M}
\newcommand{\cN}{\mathcal N}
\newcommand{\cO}{\mathcal O}
\newcommand{\cX}{\mathcal X}
\newcommand{\bA}{\mathbf A}
\newcommand{\bV}{\mathbf V}
\newcommand{\bW}{\mathbf W}
\newcommand{\bZ}{\mathbf Z}
\let\save@mathaccent\mathaccent
\newcommand*\if@single[3]{%
	\setbox0\hbox{${\mathaccent"0362{#1}}^H$}%
	\setbox2\hbox{${\mathaccent"0362{\kern0pt#1}}^H$}%
	\ifdim\ht0=\ht2 #3\else #2\fi
}
\newcommand*\rel@kern[1]{\kern#1\dimexpr\macc@kerna}
\newcommand*\widebar[1]{\@ifnextchar^{{\wide@bar{#1}{0}}}{\wide@bar{#1}{1}}}
\newcommand*\wide@bar[2]{\if@single{#1}{\wide@bar@{#1}{#2}{1}}{\wide@bar@{#1}{#2}{2}}}
\newcommand*\wide@bar@[3]{%
	\begingroup
	\def\mathaccent##1##2{%
		%Enable nesting of accents:
		\let\mathaccent\save@mathaccent
		%If there's more than a single symbol, use the first character instead (see below):
		\if#32 \let\macc@nucleus\first@char \fi
		%Determine the italic correction:
		\setbox\z@\hbox{$\macc@style{\macc@nucleus}_{}$}%
		\setbox\tw@\hbox{$\macc@style{\macc@nucleus}{}_{}$}%
		\dimen@\wd\tw@
		\advance\dimen@-\wd\z@
		%Now \dimen@ is the italic correction of the symbol.
		\divide\dimen@ 3
		\@tempdima\wd\tw@
		\advance\@tempdima-\scriptspace
		%Now \@tempdima is the width of the symbol.
		\divide\@tempdima 10
		\advance\dimen@-\@tempdima
		%Now \dimen@ = (italic correction / 3) - (Breite / 10)
		\ifdim\dimen@>\z@ \dimen@0pt\fi
		%The bar will be shortened in the case \dimen@<0 !
		\rel@kern{0.6}\kern-\dimen@
		\if#31
		\overline{\rel@kern{-0.6}\kern\dimen@\macc@nucleus\rel@kern{0.4}\kern\dimen@}%
		\advance\dimen@0.4\dimexpr\macc@kerna
		%Place the combined final kern (-\dimen@) if it is >0 or if a superscript follows:
		\let\final@kern#2%
		\ifdim\dimen@<\z@ \let\final@kern1\fi
		\if\final@kern1 \kern-\dimen@\fi
		\else
		\overline{\rel@kern{-0.6}\kern\dimen@#1}%
		\fi
	}%
	\macc@depth\@ne
	\let\math@bgroup\@empty \let\math@egroup\macc@set@skewchar
	\mathsurround\z@ \frozen@everymath{\mathgroup\macc@group\relax}%
	\macc@set@skewchar\relax
	\let\mathaccentV\macc@nested@a
	%The following initialises \macc@kerna and calls \mathaccent:
	\if#31
	\macc@nested@a\relax111{#1}%
	\else
	%If the argument consists of more than one symbol, and if the first token is
	%a letter, use that letter for the computations:
	\def\gobble@till@marker##1\endmarker{}%
	\futurelet\first@char\gobble@till@marker#1\endmarker
	\ifcat\noexpand\first@char A\else
	\def\first@char{}%
	\fi
	\macc@nested@a\relax111{\first@char}%
	\fi
	\endgroup
}
\newcommand{\oC}{\widebar C}
\newcommand{\oM}{\widebar M}
\newcommand{\RigSHet}{\mathrm{RigSH}_\et}
\newcommand{\RigSm}{\mathrm{RigSm}}
\newcommand{\eff}{\mathrm{eff}}
\newcommand{\PD}{\mathrm{PD}}
\newcommand{\ff}{\mathfrak f}
\newcommand{\HBM}{\mathrm{H}^\mathrm{BM}}
\newcommand{\CBM}{\mathrm{C}^\mathrm{BM}}
\newcommand{\bGm}{\mathbf G_{\mathrm m}}
\newcommand{\longto}{\longrightarrow}
\newcommand{\rsp}{\mathrm{sp}}
\newcommand{\Shv}{\mathrm{Shv}}
\newcommand{\infcat}{$\infty$-category\xspace}
\newcommand{\trunc}{\mathrm{t}_0}
\newcommand{\anL}{\mathbb L\an}
\newcommand{\bfMap}{\mathbf{Map}}
\newcommand{\st}{\mathrm{st}}
\newcommand{\llp}{(\!(}
\newcommand{\rrp}{)\!)}
\newcommand{\an}{^\mathrm{an}}
\newcommand{\et}{\mathrm{\acute{e}t}}
\newcommand{\ev}{\mathrm{ev}}
\newcommand{\inv}{^{-1}}
\newcommand{\id}{\mathrm{id}}
\newcommand{\kanal}{$k$-analytic\xspace}
\newcommand{\oMpre}{\oM^\mathrm{pre}}
\newcommand{\oCpre}{\oC^\mathrm{pre}}
\newcommand{\DM}{Deligne-Mumford\xspace}
\newcommand{\RoM}{\bbR\oM}
\providecommand{\abs}[1]{\lvert#1\rvert}
\newcommand*{\longhookrightarrow}{\ensuremath{\lhook\joinrel\relbar\joinrel\rightarrow}}
\tikzset{
  closed/.style = {decoration = {markings, mark = at position 0.5 with { \node[transform shape, xscale = .8, yscale=.4] {/}; } }, postaction = {decorate} },
  open/.style = {decoration = {markings, mark = at position 0.5 with { \node[transform shape, scale = .7] {$\circ$}; } }, postaction = {decorate} }
}
\DeclareMathOperator{\iHom}{\underline{Hom}}
\DeclareMathOperator{\Map}{Map}
\DeclareMathOperator{\Sp}{Sp}
\DeclareMathOperator{\Spec}{Spec}
\DeclareMathOperator{\Spf}{Spf}
\DeclareMathOperator{\Sym}{Sym}
\DeclareMathOperator{\val}{val}
\DeclareMathOperator*{\colim}{colim}
\begin{document}
\title{Non-archimedean Gromov-Witten invariants}

\author{Mauro PORTA}
\address{Mauro PORTA, Institut de Recherche Mathématique Avancée, 7 Rue René Descartes, 67000 Strasbourg, France}
\email{porta@math.unistra.fr}

\author{Tony Yue YU}
\address{Tony Yue YU, Department of Mathematics M/C 253-37, California Institute of Technology, 1200 E California Blvd, Pasadena, CA 91125, USA}
\email{yuyuetony@gmail.com}
\date{September 26, 2022}
\subjclass[2010]{Primary 14N35; Secondary 14G22, 14D23}
\keywords{}

\begin{abstract}
Motivated by mirror symmetry and the enumeration of holomorphic disks, we construct the theory of Gromov-Witten invariants in the setting of non-archimedean analytic geometry.
We build on our previous works on derived non-archimedean geometry and non-archimedean quantum K-invariants, as well as recent developments of rigid analytic motives and virtual fundamental classes in derived geometry.
Our approach gives also a new perspective for the classical algebraic case.
\end{abstract}

\maketitle

\tableofcontents

\section{Introduction} \label{sec:intro}

The Gromov-Witten invariants of a smooth complex projective variety $X$ are rational numbers which intuitively count closed algebraic curves in $X$ of given genus, homology class and incidence conditions.
They are of great interest in mathematics because they provide the first rigorous and general approach to the ancient problem of curve counting in algebraic varieties (see \cite{Pandharipande_13/2_ways}).
More importantly, they are essential for the mathematical formulation of mirror symmetry (see \cite{Hori_Mirror_symmetry}).

Recall that mirror symmetry is a conjectural duality between Calabi-Yau varieties.
Roughly, it predicts that given any Calabi-Yau variety $X$, there is a mirror Calabi-Yau variety $Y$, such that we have an ever-growing list of deep geometric relations between $X$ and $Y$, involving Hodge structures, Gromov-Witten invariants, Fukaya categories, derived categories of coherent sheaves, SYZ torus fibrations, and so on.
A more careful study on the origin of mirror symmetry reveals that mirror symmetry should not be considered as a duality between individual Calabi-Yau varieties, but rather a duality between maximally degenerating families of Calabi-Yau varieties (see \cite{Strominger_Mirror_symmetry_is_T-duality,Kontsevich_Homological_mirror_symmetry,Gross_Large_complex_structure_limits}).
In general, given any algebraic family of varieties $\cX_t$ over a small punctured disk in $\bbC$ with coordinate $t$, we obtain a single algebraic variety $X$ defined over the field $\bbC\llp t\rrp$ of formal Laurent series.
Note that $\bbC\llp t\rrp$ is a non-archimedean field with respect to the norm given by the $t$-adic valuation.
So it makes sense to study $X$ not only as an algebraic variety, but also as a non-archimedean analytic space, after applying non-archimedean analytification $X\an$.
This observation opens exciting new ways for studying the family $\cX_t$ thanks to substantial recent developments on the subject of non-archimedean geometry (see \cite{Conrad_Several_approaches,Berkovich_Spectral_theory,Berkovich_Etale_cohomology,Huber_Etale_cohomology}).

More concretely, we now have a rigorous foundation of SYZ (Strominger-Yau-Zaslow) torus fibration in the non-archimedean setting (see \cite{Nicaise_Xu_Yu_The_non-archimedean_SYZ_fibration}), whose original archimedean version is still far beyond reach.
This will also facilitate the mirror reconstruction problem.
It is noted in \cite[\S 1.2]{Kontsevich_Affine_structures} that heuristic ideas for mirror reconstruction were proposed independently by Fukaya and Kontsevich more than twenty years ago, but their realization was hindered by a poor understanding of the SYZ torus fibration, and by the lack of knowledge of instanton corrections given by counts of curves with boundaries on SYZ torus fibers.
The non-archimedean approach is supposed to solve both problems simultaneously.
The proof of concept has been worked out in the case of affine log Calabi-Yau varieties (see \cite{Yu_Enumeration_of_holomorphic_cylinders_I,Yu_Enumeration_of_holomorphic_cylinders_II,Keel_Yu_The_Frobenius}),
and has found applications beyond mirror symmetry, towards cluster algebras in representation theory (\cite{Keel_Yu_The_Frobenius}) and the compactification of moduli spaces (\cite{Hacking_Keel_Yu_Secondary_fan}).

Nevertheless, due to the lack of a robust theory of non-archimedean enumerative geometry, the authors were obliged to restrict to naive curve counts in the above works.
Although naive curve counts are highly intuitive, they are delicate to work with, and this also hampers any generalizations beyond affine log Calabi-Yau varieties.
Therefore, it has been desired for a long time to develop an analog of Gromov-Witten theory in non-archimedean analytic geometry.

In order to set up a new enumerative theory, there are in general two issues to be resolved: compactness and transversality.
For symplectic Gromov-Witten theory, the compactness is based on the famous Gromov compactness theorem (see \cite{Gromov_Pseudoholomorphic_curves,Pansu_Compactness,Ye_Gromov's_compactness,Hummel_Gromov's_compactness}).
Its algebraic formulation is worked out in \cite{Kontsevich_Enumeration,Fulton_Notes_on_stable_maps}, and the non-archimedean analog is established in \cite{Yu_Gromov_compactness}.
The issue of transversality means that the moduli space of curves in a given target space can be singular and have larger than expected dimension.
So the correct enumerative invariants should not be given by the actual fundamental class of the moduli space, but rather a ``virtual fundamental class'' (see \cite[\S 1.4]{Kontsevich_Enumeration}).
A proposal for the construction of virtual fundamental classes was made by Kontsevich in loc.\ cit., using hypothetical notions of super-scheme and quasi-manifold.
Although these ideas were precursors of modern derived algebraic geometry, they were not successfully carried out at that time due to the lack of foundations of higher category theory and higher algebra.
Nevertheless, they quickly inspired a variety of methods for the construction of virtual fundamental classes in symplectic geometry (see \cite{Fukaya_Arnold_conjecture,McDuff_Notes_on_Kuranishi_atlases,Ruan_Virtual_neighborhoods,Hofer_Applications_of_polyfold_theory_I,Pardon_An_algebraic_approach_to_virtual_fundamental_cycles}) and in algebraic geometry (see \cite{Li_Virtual_moduli,Behrend_Intrinsic_normal_cone}).

Unfortunately, none of the existing methods in symplectic geometry or algebraic geometry can be easily carried over to non-archimedean analytic geometry.
Thankfully, the theory of derived algebraic geometry is now well-established (see \cite{DAG-V,Lurie_SAG,HAG-II}).
Our approach towards the issue of transversality is to develop an analog of derived geometry in the non-archimedean setting (see \cite{Porta_Yu_Derived_non-archimedean_analytic_spaces,Porta_Yu_Representability_theorem}).
Using the representability theorem, we endow the moduli stack of non-archimedean stable maps with a derived structure (see \cite{Porta_Yu_Representability_theorem,Porta_Yu_Derived_Hom_spaces,Porta_Yu_Non-archimedean_quantum_K-invariants}), which is supposed to retain all information of the underlying derived moduli problem (see \cite{DAG-X}), in particular, the original enumerative problem.

Now the question becomes: how do we extract numerical enumerative invariants from the derived structure.
In algebraic geometry, we have two types of numerical invariants built from Gromov-Witten theory: the classical cycle-theoretic invariants via Chow groups (see \cite{Li_Virtual_moduli,Behrend_Gromov-Witten_invariants}), and the K-theoretical invariants  (see \cite{Givental_On_the_WDVV,Lee_Quantum_K-theory_I}).
K-theory works similarly in non-archimedean geometry, and we deduced the K-theoretical invariants from the derived structure and established all the expected properties in \cite[\S 8]{Porta_Yu_Non-archimedean_quantum_K-invariants}.
We were not able to deduce the cycle-theoretic invariants in loc.\ cit., because intersection theory for Chow groups as in \cite{Fulton_Intersection_theory} does not work generally in analytic geometry, e.g.\ we may not have enough cycles to arrange into transverse positions.

The purpose of this paper is to circumvent this problem and build the cycle-theoretic numerical invariants from the derived structure.
The insight is that Chow groups may not be the right objects for intersection theory in non-archimedean analytic geometry.
Instead, we will use the theory of rigid analytic motives developed by Ayoub \cite{Ayoub_Motifs}.
Although the exact relations between cycles and motivic homology groups are not yet well understood, the six functor formalism for rigid analytic motives (see \cite{Ayoub_The_six}) is already sufficiently powerful for the purpose of intersection theory at the level of motivic homologies; in fact, it gives not only an intersection theory, but a \emph{bivariant} intersection theory in the sense of Fulton and MacPherson (\cite{Fulton_Categorical_framework}) in non-archimedean geometry.

It is plausible to use other cohomology theories in non-archimedean geometry for the purpose of extracting numerical invariants, such as étale cohomology (\cite{Berkovich_Etale_cohomology,Huber_Etale_cohomology,de_Jong_Etale_cohomology}) or de Rham cohomology (\cite{Schneider_The_cohomology,Grosse-Klonne_De_Rham_cohomology}).
However, one disadvantage is that they do not have rational coefficients.
So the numerical invariants (i.e.\ the counts of curves) will be $l$-adic numbers (in the case of étale cohomology) or elements of the ground non-archimedean field (in the case of de Rham cohomology), neither of which is satisfactory.
In the case of $\bbC\llp t\rrp$ as the ground field, there is an étale cohomology theory with integer coefficients via vanishing cycles for formal schemes by Berkovich \cite{Berkovich_Complex_analytic_vanishing_cycles_for_formal_schemes}, which may be used to overcome the above disadvantage, provided that we develop sufficient functorial properties of the theory.

Another essential tool we use in this paper is the deformation to the shifted tangent bundle by Khan and Rydh \cite{Khan_Virtual_Cartier_divisors}, which is a derived analog of the deformation to the normal cone.
This deformation gives rise to the desired virtual fundamental class, as constructed in Khan \cite{Khan_Virtual_fundamental_classes}.

The resulting virtual fundamental classes satisfy all the expected properties parallel to the classical Behrend-Manin axioms in \cite{Behrend_Stacks_of_stable_maps}.

\begin{theorem}[see \cref{sec:axioms}] \label{thm:relations_vfc}
	Let $S$ be a rigid \kanal space and $X$ a rigid \kanal space smooth over $S$.
	Let $\RoM(X/S,\tau,\beta)$ denote the derived moduli stack of $(\tau,\beta)$-marked stable maps into $X/S$ associated to an A-graph $(\tau,\beta)$.
	Let
	\[[\RoM(X/S,\tau,\beta)]\in\HBM_d\big(\RoM(X/S,\tau,\beta)/S,\bbQ_S(2d)\big)\]
	be the associated virtual fundamental class in the motivic Borel-Moore homology, where $d$ denotes the virtual dimension.
	It satisfies the following equalities with respect to elementary operations on A-graphs:
	
	\medskip\noindent
	(1) Mapping to a point:
	Let $(\tau,0)$ be any A-graph where $\beta$ is 0.
	We have $\oM(X/S,\tau,0)\simeq\oM_\tau\times X$, and
	\[[\RoM(X/S,\tau,0)]=c_{g(\tau)\dim X/S}\big(\rR^1\pi_*\cO_{\oC_\tau}\boxtimes\bbT\an_{X/S}\big)\cap[\oM(X/S,\tau,0)],\]
	where $\pi\colon\oC_\tau\to\oM_\tau$ and $c_\cdot(\cdot)$ denotes the Chern class.
	
	\medskip\noindent
	(2) Products: Let $(\tau_1, \beta_1)$ and $(\tau_2, \beta_2)$ be two A-graphs.
	We have
	\[[\RoM(X/S,\tau_1\sqcup\tau_2,\beta_1\sqcup\beta_2)]=[\RoM(X/S, \tau_1, \beta_1)] \boxtimes [\RoM(X/S, \tau_2, \beta_2)].\]
	
	\medskip\noindent
	(3) Cutting edges: Let $(\sigma,\beta)$ be an A-graph obtained from $(\tau,\beta)$ by cutting an edge $e$ of $\tau$.
	Let $v,w$ be the two tails of $\sigma$ created by the cut.
	We have a derived pullback diagram
	\[ \begin{tikzcd}
		\RoM(X/S, \tau, \beta) \rar \dar{\ev_e} & \RoM(X/S, \sigma, \beta) \dar{\ev_v \times \ev_w} \\
		X \rar{\Delta} & X \times_S X ,
	\end{tikzcd} \]
	and
	\[[\RoM(X/S, \tau, \beta)]=\Delta^![\RoM(X/S, \sigma, \beta)].\]
	
	\medskip\noindent
	(4) Universal curve:
	Let $(\sigma,\beta)$ be an A-graph obtained from $(\tau,\beta)$ by forgetting a tail $t$ attached to a vertex $w$.
	Let
	\[\pi\colon\RoM(X/S,\tau,\beta)\to\RoM(X/S,\sigma,\beta)\]
	denote the forgetful map, and $\oCpre_w\to\oMpre_\sigma$ the universal curve corresponding to $w$.
	We have a derived pullback diagram
	\[ \begin{tikzcd}
		\RoM( X/S, \tau, \beta ) \rar{\pi} \dar & \RoM( X/S, \sigma, \beta ) \dar \\
		\oCpre_w \rar & \oMpre_\sigma ,
	\end{tikzcd} \]
	and
	\[[\RoM(X/S, \tau, \beta)]=\pi^![\RoM(X/S, \sigma, \beta)].\]

	\medskip\noindent
	(5) Forgetting tails:
	Following the context in (4), we have a derived pullback diagram
	\[ \begin{tikzcd}
		\RoM(X/S, \tau, \beta) \rar{\Psi} \dar & \oM_\tau \times_{\oM_\sigma}  \RoM(X/S, \sigma, \beta) \dar \\
		\oCpre_w \rar & \oM_\tau \times_{\oM_\sigma} \oMpre_\sigma ,
	\end{tikzcd} \]
	and
	\[\Psi_*[\RoM(X/S, \tau, \beta)]=\Phi^![\RoM(X/S, \sigma, \beta)],\]
	where $\Phi\colon\oM_\tau\to\oM_\sigma$ is the forgetting-tails map for pointed stable curves.
	
	\medskip\noindent
	(6) \textbf{Contracting edges:}
	Let $(\sigma,\beta)$ be an A-graph where $\sigma$ is obtained from a modular graph $\tau$ by contracting an edge (possibly a loop) $e$.
	Let $\beta_j, j\in J$ be all possible curve classes on $\tau$ that project to $\beta$ under the contraction.
	We have a derived pullback diagram
	\[\begin{tikzcd}
		\coprod_j \RoM(X/S,\tau,\beta_j) \rar{\Psi} \dar& \oM_\tau\times_{\oM_\sigma}\RoM(X/S,\sigma,\beta) \dar \\
		\oMpre_\tau \rar & \oM_\tau \times_{\oM_\sigma} \oMpre_\sigma ,
	\end{tikzcd}\]
	and
	\[\sum_j\Psi_*[\RoM(X/S, \tau, \beta_j)]=\Phi^![\RoM(X/S, \sigma, \beta)] ,\]
	where $\Phi\colon\oM_\tau\to\oM_\sigma$ is the contracting-edges map for pointed stable maps.
\end{theorem}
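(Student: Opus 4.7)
The plan is to verify each axiom by reducing it to the known functorial properties of Khan's virtual fundamental class construction \cite{Khan_Virtual_fundamental_classes}, applied to the derived pullback diagrams that emerge from our prior work on derived moduli of non-archimedean stable maps. The key inputs are: (i) the quasi-smoothness of $\RoM(X/S,\tau,\beta)$ over $S$, whose cotangent complex was computed in \cite{Porta_Yu_Derived_Hom_spaces,Porta_Yu_Non-archimedean_quantum_K-invariants}; (ii) Ayoub's six-functor formalism for rigid analytic motives, which endows motivic Borel-Moore homology with refined Gysin pullbacks $f^!$ and proper pushforwards $f_*$; and (iii) the compatibility of virtual fundamental classes with derived base change and products, established in Khan's work and transferring verbatim to the rigid analytic setting once the six-functor formalism is fixed.

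Axiom (1) amounts to a virtual excess intersection computation. When $\beta=0$, stable maps reduce to constant maps, so the underlying classical moduli is $\oM_\tau\times X$, while the obstruction theory exhibits $\RoM(X/S,\tau,0)$ as the derived zero locus of the tautological section of the vector bundle $\rR^1\pi_*\cO_{\oC_\tau}\boxtimes\bbT\an_{X/S}$ over $\oM_\tau\times X$. The virtual class is then given by the top Chern class cap the fundamental class, by the standard excess formula.

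For axioms (2)--(5) the relevant derived pullback squares follow essentially from the universal properties of the moduli of stable maps, as set up in our earlier work: products of A-graphs yield products of moduli; cutting an edge corresponds to fiber product with the diagonal $X\to X\times_S X$; attaching a tail corresponds to pullback along the universal curve; and the forget-tails map factors through $\oM_\tau\times_{\oM_\sigma}\RoM(X/S,\sigma,\beta)$ with the indicated Cartesian square. Axiom (2) then follows from the external product formula for virtual classes. Axioms (3) and (4) are applications of functoriality of virtual classes under refined Gysin pullback, the former along the regular immersion $\Delta$ which is a smooth closed immersion since $X$ is smooth over $S$, the latter along the universal curve morphism. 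Axiom (5) combines Gysin pullback along $\oCpre_w\to\oM_\tau\times_{\oM_\sigma}\oMpre_\sigma$ with proper pushforward along $\Psi$; here $\Psi$ is an isomorphism on truncations and virtually birational, so $\Psi_*$ preserves the virtual class.

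The main obstacle, and the heart of the theorem, is axiom (6), the edge contraction formula. The subtle point is that the disjoint union $\coprod_j\RoM(X/S,\tau,\beta_j)\to\oM_\tau\times_{\oM_\sigma}\RoM(X/S,\sigma,\beta)$ arises from decomposing stable maps to the contracted graph $\sigma$ according to how they lift along the contraction: each lift is canonically classified by a curve class $\beta_j$ projecting to $\beta$. Establishing that the derived structures match on both sides of the square requires a careful analysis of the behavior of the relative cotangent complex under contraction of a node, distinguishing loops from non-loop edges, since in the loop case the smoothing parameter contributes an additional infinitesimal deformation. Once this derived Cartesian identity is established at the level of stacks, the relation between virtual classes follows by the same combination of Gysin pullback functoriality and proper pushforward compatibility as in axiom (5).
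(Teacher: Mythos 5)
Your high-level strategy---reduce to the Behrend--Manin axioms, then apply functoriality, base change, and projection formulas for Khan-style virtual classes within Ayoub's six-functor formalism---matches the paper. For axioms (2), (3), (4) your reasoning is essentially the paper's, modulo one bookkeeping convention you leave implicit: the paper's refined Gysin $\Delta^!$ and $\pi^!$ are, by the convention of Remark~\ref{rem:Gysin}, the same as the Gysin pullbacks along the top arrows $c$ and $\pi$, so the identities follow from $a_\tau^! = c^! \circ a_\sigma^!$ (resp.\ $a_\tau^! = \pi^! \circ a_\sigma^!$) applied to $1$.

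Two of your steps have genuine gaps. For axiom~(1), you assert that $\RoM(X/S,\tau,0)$ \emph{is} the derived zero locus of a tautological section of $E\coloneqq \rR^1\pi_*\cO_{\oC_\tau}\boxtimes\bbT\an_{X/S}$. That is not established, and it is not what the paper proves: there is no obvious nonzero tautological section, and the derived zero locus of the zero section is the vector-bundle-cone $\mathfrak N = \Spec_{\oM_\tau\times X}(\Sym(\anL_j[-1]))$, which is a priori a different derived enhancement of $\oM_\tau\times X$ than $\RoM(X/S,\tau,0)$. The paper's mechanism is the Khan--Rydh deformation to the $1$-shifted tangent bundle applied to the closed immersion $j\colon \oM_\tau\times X\hookrightarrow\RoM(X/S,\tau,0)$, together with $\bA^1$-invariance of Borel-Moore homology, which gives $[\RoM(X/S,\tau,0)]=[\mathfrak N]$; only then does excess intersection apply. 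Without the deformation you have no way to identify the class of the actual derived moduli stack with the class of the cone.

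For axiom~(5) you claim that $\Psi$ is ``an isomorphism on truncations and virtually birational, so $\Psi_*$ preserves the virtual class.'' That is not correct, and in any case it is not where the genericity is used. The generic-isomorphism input in the paper's argument concerns the bottom horizontal map $\rho\colon\oCpre_w\to\oM_\tau\times_{\oM_\sigma}\oMpre_\sigma$ between \emph{underived} stacks (and similarly $\oMpre_\tau\to\oM_\tau\times_{\oM_\sigma}\oMpre_\sigma$ in axiom~(6)), which is an isomorphism over the dense open locus where the universal pointed curve is stable, giving $\rho_*[\oCpre_w]=[\oM_\tau\times_{\oM_\sigma}\oMpre_\sigma]$. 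This is then lifted to the top row through base change of virtual classes ($\lambda^!(1)=\rho^*\mu^!(1)$) and the projection formula for composition products, not by a birationality assertion about $\Psi$ itself. Finally, your emphasis on axiom~(6) as ``the heart of the theorem'' requiring a fresh analysis of the cotangent complex under edge contraction overstates what is done here: the derived Cartesian square is quoted from the authors' previous paper, and the proof of axiom~(6) in this paper is word-for-word parallel to that of axiom~(5).
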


We summarize the functorial properties of bivariant motivic Borel-Moore homology with respect to pullback, pushforward, Gysin pullback, Gysin pushforward and various products in \cref{sec:motivic_homology,sec:vfc}.
Combining with the above theorem, we deduce the analogous Behrend-Manin axioms for the numerical invariants associated to the virtual fundamental classes, see \cref{sec:numerical_GW}.

In the last section of the paper, we introduce non-archimedean Gromov-Witten invariants with naive tangency conditions.
Consider a proper smooth \kanal space $X$ and an A-graph $(\tau,\beta)$.
For every tail $i$, we fix an lci closed analytic subspace $Z_i\subset X$ and a positive integer $m_i$.
Then we define Gromov-Witten counts of curves whose $i$-th marked point meets $Z_i$ with tangency at least $m_i$.
We prove a list of properties for such counts parallel to the list in \cref{sec:numerical_GW}.
We call it \emph{naive tangency conditions} because the theories of relative Gromov-Witten invariants (\cite{Ionel_Relative_Gromov-Witten_invariants,Li_A_degeneration_formula}) and logarithmic Gromov-Witten invariants (\cite{Chen_Stable_logarithmic_maps_I,Abramovich_Stable_logarithmic_maps_II,Gross_Logarithmic_Gromov-Witten_invariants}) are more sophisticated ways of treating tangency conditions.
Their relations with our naive tangency conditions require further investigation.
Our main motivation for introducing naive tangency conditions is for the construction of structure constants and theta functions in mirror symmetry generalizing \cite{Keel_Yu_The_Frobenius}.
Thanks to the flexibility of non-archimedean analytic curves compared with logarithmic curves, we are hopeful that naive tangency conditions will be sufficient for our purposes.
It is also speculated that one may obtain a simplification of the degeneration formula via naive tangency conditions.

\medskip
Although the whole paper is phrased in the language of non-archimedean analytic geometry in view of the future applications, the theory can be carried over verbatim to algebraic geometry.

Let us summarize three main advantages of our derived approach towards Gromov-Witten invariants versus more classical treatments:

\smallskip\noindent
(1) We mentioned above the two general issues for establishing any new enumerative theory: compactness and transversality.
In the classical approach, the solution for transversality, i.e.\ the construction of virtual fundamental classes, is intertwined with compactness.
In our derived approach, the two issues are now completely separate.
We construct the virtual fundamental classes in motivic Borel-Moore homology, without any assumptions on the compactness of moduli stacks.
This will be important in our applications towards instanton corrections in non-archimedean mirror symmetry, because we will not be able to guarantee compactness in the beginning, instead the compactness will only be attained under further conditions on the associated tropical curves (see \cite{Yu_Enumeration_of_holomorphic_cylinders_I,Keel_Yu_The_Frobenius}).

\smallskip\noindent
(2) Our theory is relative while the classical approach is absolute.
Our construction of non-archimedean Gromov-Witten virtual fundamental classes works for any rigid analytic space $X$ that is smooth over an \emph{arbitrary} rigid analytic space $S$.
This provides flexibility for studying non-archimedean Gromov-Witten invariants in families.
In particular, it will aide the proof of the invariance of counts of non-archimedean holomorphic disks with varying tail conditions, generalizing \cite[\S 13]{Keel_Yu_The_Frobenius}.

\smallskip\noindent
(3) The derived approach is conceptually more intuitive, while technically more involved.
The geometric relations of the derived moduli stacks of stable maps with respect to elementary operations on graphs are as simple as the ones for the underived moduli stacks, nevertheless, they already contain all the subtle information involving virtual fundamental classes and the underlying enumerative problem.
The deductions of Behrend-Manin axioms in \cref{sec:axioms,sec:numerical_GW} are now straightforward applications of various functorial properties of motivic (co)homology.

\bigskip
\paragraph{\textbf{General references}}

We refer to \cite{Lurie_HTT,Lurie_Higher_algebra} for background on $\infty$-categories,
to \cite{Bosch_Non-Archimedean_analysis,Berkovich_Spectral_theory,Berkovich_Etale_cohomology,Huber_Etale_cohomology} for non-archimedean analytic geometry,
to \cite{Mazza_Lecture_notes_on_motivic_cohomology,Cisinski_Triangulated} for motives in algebraic geometry,
to \cite{Ayoub_Motifs,Ayoub_The_six} for rigid analytic motives,
to \cite{Lurie_SAG,HAG-II} for derived algebraic geometry,
to \cite{Porta_Yu_Higher_analytic_stacks,Porta_Yu_Derived_non-archimedean_analytic_spaces,Porta_Yu_Representability_theorem,Porta_Yu_Derived_Hom_spaces} for derived non-archimedean analytic geometry,
to \cite{Kontsevich_Gromov-Witten_classes,Behrend_Stacks_of_stable_maps,Li_Virtual_moduli,Behrend_Gromov-Witten_invariants} for algebraic Gromov-Witten theory,
and to \cite{Porta_Yu_Non-archimedean_quantum_K-invariants} for derived stacks of stable maps.

We fix $k$ a complete non-archimedean field with nontrivial valuation throughout the paper.

\bigskip
\paragraph{\textbf{Acknowledgments}}

We are indebted to Joseph Ayoub, Martin Gallauer and Alberto Vezzani for the powerful theory of rigid analytic motives and six operations.
We would like to thank Adeel Khan for his inspiring work on virtual fundamental classes, and for numerous detailed technical discussions, without which our paper would be impossible.
Our long-term project where this paper belongs has received great support and invaluable insights from Denis Auroux, Vladimir Berkovich, Federico Binda, Antoine Chambert-Loir, Antoine Ducros, Tom Graber, Mark Gross, Walter Gubler, Benjamin Hennion, Ludmil Katzarkov, Sean Keel, Maxim Kontsevich, Gérard Laumon, Y.P.\ Lee, Jacob Lurie, Etienne Mann, Tony Pantev, Francesco Sala, Paul Seidel, Carlos Simpson, Georg Tamme, Bertrand To\"en and Gabriele Vezzosi.
The authors would also like to thank each other for the joint effort.

\section{Rigid analytic motives for stacks} \label{sec:motives}

In this section, we review the theory of motives for rigid analytic spaces following Ayoub et al.\ \cite{Ayoub_Motifs,Ayoub_The_six}, and then explain an extension of the theory to rigid analytic stacks.

Fix a rigid \kanal space $S$.
Let $\RigSm/S$ denote the category of rigid \kanal spaces smooth over $S$, and $\Shv_\et(\RigSm/S,\bbQ)$ the \infcat of étale sheaves of derived $\bbQ$-modules over $\RigSm/S$.
Let
\[\bbQ_\et(-)\colon\RigSm/S\to\Shv_\et(\RigSm/S,\bbQ)\]
denote the Yoneda embedding composed with sheafification.
Let
\[L_{\bbB^1}\colon\Shv_\et(\RigSm/S,\bbQ)\longto\RigSHet^\eff(S,\bbQ)\]
be the localization with respect to the maps of the form $\bbQ_\et(\bbB^1_X)\to\bbQ_\et(X)$ for all $X\in\RigSm/S$, and their desuspensions, where $\bbB^1_X$ denotes the closed unit disk relative over $X$.

The \infcat $\RigSHet^\eff(S,\bbQ)$ has a natural monoidal structure inherited from derived $\bbQ$-modules.
Let $\bbU^1_S$ denote the unit circle relative over $S$, and $T_S$ the image by $L_{\bbB^1}$ of the cofiber of the split inclusion $\bbQ_\et(S)\to\bbQ_\et(\bbU^1_S)$ induced by the unit section.
We define the inversion
\[\Sigma^\infty_T\colon \RigSHet^\eff(S,\bbQ)\longto\RigSHet(S,\bbQ)\coloneqq\RigSHet^\eff(S,\bbQ)[T_S^{-1}].\]
Objects of $\RigSHet(S,\bbQ)$ are called \emph{rigid analytic motives} over $S$.
For any rigid analytic space $X$ smooth over $S$, we denote $M^\eff(X)\coloneqq \Sigma^\infty_T(L_{\bbB^1}(\bbQ_\et(X)))$, called the \emph{motive} of $X$.
We denote by $1_S$ the monoidal unit of $\RigSHet(S,\bbQ)$.
For any $M\in\RigSHet(S,\bbQ)$ and $n\in\bbZ$, we denote by $M(n)$ the Tate twist given by tensoring with $T_S^{\otimes n}$.

The theory of six functors for rigid analytic motives is developed in Ayoub-Gallauer-Vezzani \cite{Ayoub_The_six}.
The \infcat $\RigSHet(S,\bbQ)$ have adjoint bifunctors $(\otimes,\iHom)$, and for any morphism $f\colon T\to S$ of rigid \kanal spaces, we have adjunctions
\begin{align*}
	f^* &\colon\RigSHet(S,\bbQ)\leftrightarrows\RigSHet(T,\bbQ)\colon f_*\\
	f_! &\colon\RigSHet(T,\bbQ)\leftrightarrows\RigSHet(S,\bbQ)\colon f^!.
\end{align*}
They satisfy the standard Grothendieck six functors formalism (see \cite[A.5.1]{Cisinski_Triangulated}).

We can extend rigid analytic motives to (higher) \kanal stacks (see \cite[\S 3.3]{Porta_Yu_Higher_analytic_stacks}) via a right Kan extension.
More precisely, given any \kanal stack $S$, the \infcat $\RigSHet(S,\bbQ)$ is identified with the limit of $\RigSHet(S',\bbQ)$ over the smooth site of $S$.
The six functors for rigid analytic motives extend also to \kanal stacks, following \cite[A.2]{Khan_Virtual_fundamental_classes}.
The only uncertainty is the invertibility of the natural transformation $\alpha_f\colon f_!\to f_*$ when $f$ is proper but non-representable.
We expect it to hold but do not yet have a proof.
Fortunately, the following special case will be sufficient for non-archimedean Gromov-Witten theory at least when the ground field $k$ has residue characteristic zero.

\begin{lemma} \label{lem:alpha_f}
	Let $\ff \colon \fX \to \fY$ be a proper morphism of formal \DM stacks locally finitely presented over $k^\circ$, the ring of integers of $k$.
	Let $f \colon X \to Y$ denote the induced map between the generic fibers.
	For any $\cF\in\RigSHet(X,\bbQ)$, the morphism
	\[\alpha_f\colon f_!(\cF)\to f_*(\cF)\]
	induced by the natural transformation $\alpha_f$ is an equivalence.
\end{lemma}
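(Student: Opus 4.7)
The strategy is to reduce to the representable case handled in \cite{Ayoub_The_six}, using the coarse moduli space to strip off the representable part of $\ff$ and then exploiting the semisimplicity of $\bbQ[G]$ for finite $G$ to handle the remaining non-representable finite-group quotient.

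First I would observe that $f_!$, $f_*$, and the natural transformation $\alpha_f$ are all compatible with smooth base change on the target, as follows from the definition of the six functors on stacks via right Kan extension along smooth covers (\cite[Appendix~A]{Khan_Virtual_fundamental_classes}). This lets me assume $\fY$ is an affine formal scheme, so that $Y$ is a rigid analytic space.

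Since $\ff$ is proper and $\fX$ is a formal DM stack, the relative inertia of $\ff$ is finite. Applying a formal analog of Keel--Mori, I would construct the coarse moduli space $\pi \colon \fX \to \fX^c$, with $\fX^c$ a formal algebraic space proper over $\fY$ and $\pi$ proper (even finite). Passing to generic fibers, the induced $f$ factors as $f = f^c \circ \pi$ with $f^c \colon X^c \to Y$ proper and representable, so $\alpha_{f^c}$ is an equivalence by the representable case of \cite{Ayoub_The_six}. By compatibility of $\alpha$ with composition, it therefore suffices to prove that $\alpha_\pi$ is an equivalence.

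For this, I would work étale-locally on $X^c$. By the local structure theorem for DM stacks with finite inertia, étale-locally the morphism $\pi$ takes the form $[U/G] \to U/G$ for a rigid analytic space $U$ equipped with an action of a finite group $G$. Tracing through the Kan-extension definitions of the six functors on $[U/G]$ and the descent along the atlas $U \to [U/G]$, one identifies $\pi_* \cF$ with the derived $G$-invariants $\cF^{hG}$ and $\pi_! \cF$ with the derived $G$-coinvariants $\cF_{hG}$ of the underlying $G$-equivariant motive on $U$. Because $\bbQ[G]$ is semisimple for any finite $G$, invariants and coinvariants agree canonically, and $\alpha_\pi$ is an equivalence.

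The main obstacle I anticipate is not the $\bbQ$-linear algebra at the last step---which is essentially formal---but the preparatory formal-geometric input: establishing a Keel--Mori-type theorem for formal DM stacks locally of finite presentation over $k^\circ$, making the étale-local quotient presentation precise in this setting, and, most delicately, justifying the identifications $\pi_* \simeq (-)^{hG}$ and $\pi_! \simeq (-)_{hG}$ at the level of rigid analytic motives directly from the Kan-extension construction in \cite{Khan_Virtual_fundamental_classes}. Each ingredient is either folklore or a close analog of a known algebraic or topological statement, but assembling them cleanly in the rigid analytic motivic setting will require some care.
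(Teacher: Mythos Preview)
Your outline matches the paper's: reduce étale-locally to $\fY$ affine, factor $f$ through the coarse moduli space, and handle the representable and non-representable pieces separately. The divergence is in the non-representable piece, which étale-locally has the form $s \colon V \to \abs{V}$ with $V = [U/G]$. You propose to identify $s_*$ and $s_!$ with homotopy $G$-invariants and coinvariants and then invoke semisimplicity of $\bbQ[G]$; the paper instead observes that since $U \to \abs{V}$ is finite and $\abs{V}$ is affinoid, the stack $V$ is relatively algebraic over $\abs{V}$, and appeals to compatibility of rigid motives with analytification (\cite[\S 4.6]{Ayoub_The_six}) together with the algebraic case \cite[Theorem A.7]{Khan_Virtual_fundamental_classes}. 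This sidesteps precisely the setup you flagged as the main obstacle. Your route is correct in principle and more self-contained, but the paper's reduction-to-algebraic trick is shorter given the available literature. One further point to tighten: the coarse moduli $\fX^c$ is a priori only a formal algebraic space, and you need (as the paper does, via \cite[\S B.2]{Conrad_Spreading-out}) that its generic fiber is a genuine rigid analytic space before the representable case applies to $f^c$.
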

\begin{proof}
		The question being étale local on the target, we can assume that $\fY$ is affine.
	Let $\abs{\fX_s}$ and $\abs{\fX}$ denote the coarse moduli spaces.
	Then the map $\ff\colon\fX\to\fY$ factors as
	$\fX \xrightarrow{\mathfrak p} \abs{\fX} \xrightarrow{\overline{\ff}} \fY$, so $f \colon X \to Y$ factors as $X \xrightarrow{p} \abs{\fX}_\eta \xrightarrow{\widebar{f}} Y$.
	By \cite[Proposition 3.6]{Abramovich_Tame_stacks}, étale locally over $\abs{\fX_s}$, the \DM stack $\fX_s$ is isomorphic to the quotient of an affine scheme by a finite group.
	Using the equivalence between the étale sites of $\abs{\fX_s} \simeq \abs{\fX}_s$ and of $\abs{\fX}$, we deduce that étale locally over $\abs{\fX}_\eta$, the \kanal \DM stack $X = \fX_\eta$ is isomorphic to the quotient of an affinoid space by a finite group.

	Let us consider such a quotient $q\colon U \to V\coloneqq U/G$ and $s\colon V\to\abs{V}$ the projection to its course moduli space.
	Since $q$ is finite, and $\abs{V}$ is affinoid by \cite[Proposition 6.3.3/3]{Bosch_Non-Archimedean_analysis}, the situation is relatively algebraic over $\abs{V}$.
	Therefore, by the compatibility with analytification (see \cite[\S 4.6]{Ayoub_The_six}), we deduce from \cite[Theorem A.7]{Khan_Virtual_fundamental_classes} that $\alpha_s$ is an equivalence.
	Therefore, $\alpha_p$ is an equivalence.
	
	By \cite[\S B.2]{Conrad_Spreading-out}, the generic fiber $\abs{\fX}_\eta$ of the formal algebraic space $\abs{\fX}$ is a \kanal space.
	Since $\ff$ is proper, so are $\overline{\ff}$ and $\widebar f$.
	We deduce that $\alpha_{\widebar f}$ is an equivalence.
	As $f_* = \widebar{f}_* \circ p_*$, the conclusion follows.
\end{proof}

Finally, we extend rigid analytic motives and six functors to derived \kanal stacks (see \cite[Definition 7.2]{Porta_Yu_Representability_theorem}) by composing with the truncation functor $\trunc$ from derived \kanal stacks to (underived) \kanal stacks.
The rationale behind this quick definition is that the motives as a universal cohomology theory should not be affected by any derived structures.
This is confirmed in the algebraic case by \cite{Khan_Motivic_homotopy_theory_in_derived_algebraic_geometry}.

\section{Bivariant motivic Borel-Moore chains and their products} \label{sec:motivic_homology}

In this section, we introduce bivariant motivic Borel-Moore chains, their functorialities and product operations on them.
Then we deduce motivic Borel-Moore homologies, motivic cohomologies and their properties.
The material is parallel to the algebraic theory of motivic cohomologies (see \cite{Mazza_Lecture_notes_on_motivic_cohomology}, \cite{Deglise_Bivariant_theories} and \cite[\S 2]{Khan_Virtual_fundamental_classes}), but we put more emphasis on the various product operations for the purpose of applications in later sections.

\begin{definition}\label{def:bivariant_chains}
	Let
	\[ \begin{tikzcd}[column sep = small]
		X \arrow{rr}{f} \arrow{dr}[swap]{a} & & Y \arrow{dl}{b} \\
		& S
	\end{tikzcd} \]
	be a triangle of derived \kanal stacks, and let $\cF \in \RigSHet(S,\bbQ)$ be a rigid analytic motive.
	We define the \emph{motivic bivariant Borel-Moore chain} of $f/S$ with coefficients in $\cF$ by
	\[ \CBM_\bullet(f/S,\cF) \coloneqq a_* f^! b^*(\cF) \in \RigSHet(S,\bbQ),\]
	and the \emph{motivic bivariant Borel-Moore homology} of $f/S$ with coefficients in $\cF$ by
	\[\HBM_s\big(f/S,\cF(r)\big)\coloneqq \pi_0 \Map_{\RigSHet(S,\bbQ)}\big(1_S(r)[s],\CBM_\bullet(f/S,\cF)\big),\qquad r,s\in\bbZ.\]
	We will omit the word ``bivariant'' for simplicity.
	When the base $S$ is clear from the context, we will equally denote the above objects by $\CBM_\bullet(X/Y,\cF)$ and $\HBM_s(X/Y,\cF(r))$.
\end{definition}

Here are two special cases of the above definition.

\begin{definition}
	Let $a\colon X\to S$ be a morphism of derived \kanal stacks and $\cF\in\RigSHet(S,\bbQ)$.
	We define:
	\begin{enumerate}[leftmargin=*, itemsep=1ex]
		\item the \emph{motivic Borel-Moore chain} of $X/S$ with coefficients in $\cF$ by
		\[ \CBM_\bullet(X/S,\cF) \coloneqq \CBM_\bullet(a/S,\cF) \simeq a_* a^!(\cF) \in \RigSHet(S,\bbQ), \]
		\item the \emph{motivic Borel-Moore homology} of $X/S$ with coefficients in $\cF$ by
		\[\HBM_s\big(X/S,\cF(r)\big)\coloneqq \pi_0 \Map_{\RigSHet(S,\bbQ)}\big(1_S(r)[s],\CBM_\bullet(X/S,\cF)\big),\qquad r,s\in\bbZ,\]
		\item the \emph{motivic cochain of $X/S$} with coefficients in $\cF$ by
		\[ \rC^\bullet(X/S,\cF) \coloneqq \CBM_\bullet(\id_X/S) \simeq a_* a^*(\cF) \in \RigSHet(S,\bbQ) ,\]
		\item the \emph{motivic cohomology} of $X$ with coefficients in $\cF$ by
		\[\rH^s\big(X,\cF(r)\big)\coloneqq \pi_0\Map_{\RigSHet(S,\bbQ)}\big(1_S(-r)[-s],\rC^\bullet(X/S,\cF)\big),\qquad r,s\in\bbZ,\]
		which is independent of the base $S$.
	\end{enumerate}
\end{definition}

\begin{definition}
	Let
	\[ \begin{tikzcd}[column sep = small]
		X_2 \arrow{rr}{f_2} \dar{g} & & Y_2 \dar{h} \\
		X_1 \arrow{rr}{f_1} \arrow{dr}[swap]{a_1} & & Y_1 \arrow{dl}{b_1} \\
		& S
	\end{tikzcd} \]
	be a commutative diagram of derived \kanal stacks.
	Set $a_2 \coloneqq a_1 \circ g$ and $b_2 \coloneqq a_2 \circ h$.
	\begin{enumerate}[wide=0pt, itemsep=1ex]
		\item Assume that the square is a pullback.
		The \emph{chain-level pullback}
		\[ h^* \colon \CBM_\bullet(f_1/S,\cF) \longrightarrow \CBM_\bullet(f_2/S,\cF) \]
		is defined as the composition of
		\[ a_{1*} f_1^! b_1^*(\cF) \xrightarrow{\eta_*} a_{1*} f_1^! h_* h^* b_1^*(\cF) \simeq a_{1*} g_* f_2^! h^* b_1^*(\cF) \simeq a_{2*} f_2^! b_2^*(\cF) , \]
		where $\eta_*$ denotes the unit of the adjunction $h^* \dashv h_*$.
		
		\item Assume that $g$ is proper and $h$ is étale.
		The \emph{chain-level proper pushforward}
		\[ g_* \colon \CBM_\bullet(f_2/S,\cF) \longrightarrow \CBM_\bullet(f_1/S,\cF) \]
		is defined as the composition of
		\[ a_{2*} f_2^! b_2^*(\cF) \xrightarrow{\eta_!} a_{2*} f_2^! h^! h_! b_2^*(\cF) \simeq a_{1*} g_* g^! f_1^! h_! h^* b_1^*(\cF) \simeq a_{1*} g_! g^! f_1^! h_! h^! b_1^*(\cF) \to a_{1*} f_1^! b_1^*(\cF), \]
		where $\eta_!$ denotes the unit of the adjunction $h_!\dashv h^!$.
	\end{enumerate}
\end{definition}

\begin{remark} \label{rem:motivic_homology_functoriality}
	As special cases of the above definition, we obtain:
	\begin{enumerate}[wide=0pt, itemsep=1ex]
		\item when $f_1 = \id_{X_1}$ and $f_2 = \id_{X_2}$, the \emph{pullback} morphisms
		\begin{align*}
		g^* &\colon \rC^\bullet(X_1/S,\cF) \longrightarrow \rC^\bullet(X_2/S,\cF), \\
		g^* &\colon \rH^s(X_1/S,\cF(r)) \longrightarrow \rH^s(X_2/S,\cF(r)) ;
		\end{align*}
		
		\item when $h = b_1 = \id_{S}$ and $g$ is proper, the \emph{proper pushforward} morphisms
		\begin{align*}
		g_* &\colon \CBM_\bullet(X_2/S,\cF) \longrightarrow \CBM_\bullet(X_1/S,\cF) , \\
		g_* &\colon \HBM_s(X_2/S,\cF(r)) \longrightarrow \HBM_s(X_1/S,\cF(r)) ;
		\end{align*}
		
		\item \label{rem:motivic_homology_functoriality:pushforward_homology_to_cohomology}
		when $h = f_1 = \id_{Y_1}$ and $g$ is proper, the \emph{proper pushforward} morphisms
		\begin{align*}
		g_* &\colon \CBM_\bullet(f_2/S) \longrightarrow \rC^\bullet(Y_1/S) , \\
		g_* &\colon \HBM_s(f_2/S,\cF(r)) \longrightarrow \rH^{-s}(X,\cF(-r)) ;
		\end{align*}
		
		\item \label{rem:motivic_homology_functoriality:base_change}
		when $b_1 = \id_{Y_1}$, the \emph{base change} morphisms
		\begin{align*}
		h^* &\colon \CBM_\bullet(X_1/Y_1,\cF) \longrightarrow \CBM_\bullet(X_2/Y_1,\cF) \simeq h_* \CBM_\bullet(X_2/Y_2,h^\ast(\cF)), \\
		h^* &\colon \HBM_s(X_1/Y_1,\cF(r)) \longrightarrow \HBM_s(X_2/Y_2, h^\ast(\cF(r))) .
		\end{align*}
	\end{enumerate}
\end{remark}

\bigskip
Assume from now on that the coefficient $\cF$ admits an $\mathbb E_\infty$-ring structure, and denote by
\[ m \colon \cF \otimes \cF \longrightarrow \cF \]
the underlying multiplication.
This will induce various product operations on motivic (co)chains and (co)homologies as follows.

\begin{definition}
	Let $f \colon X \to Y$ be a morphism of derived \kanal stacks, $\cM \in \RigSHet(X,\bbQ)$ and $\cN \in \RigSHet(Y,\bbQ)$.
	We have a natural transformation
	\[ f_!(f^!(\cM) \otimes f^*(\cN)) \simeq f_!f^!(\cM) \otimes \cN \xrightarrow{\varepsilon_!} \cM \otimes \cN , \]
	where $\varepsilon_!$ denotes the counit of the adjunction $f_! \dashv f^!$, and the equivalence is given by the projection formula.
	Using the same adjunction again, we obtain an exchange transformation
	\[ \mathrm{Ex}^{!*}_\otimes(f) \colon f^!(\cM) \otimes f^\ast(\cN) \longrightarrow f^!(\cM \otimes \cN) .  \]
\end{definition}

\begin{definition}
	Let
	\[ \begin{tikzcd}[column sep = small]
		X \arrow{rr}{f} \arrow{dr}[swap]{a} & & Y \arrow{dl}{b} \\
		& S
	\end{tikzcd} \]
	be a commutative triangle of derived \kanal stacks.
	\begin{enumerate}[wide=0pt, itemsep=1ex]
		\item The \emph{left composition product} is the map
		\begin{equation}\label{eq:left_composition}
			\circ \colon \rC^\bullet(X/S,\cF) \otimes \CBM_\bullet(f/S,\cF) \longrightarrow \CBM_\bullet(f/S,\cF)
		\end{equation}
		that corresponds under the adjunctions $a^* \dashv a_*$ and $f_! \dashv f_!$ to the composition of
		\begin{multline*}
			f_!( a^* a_* a^*(\cF) \otimes a^* a_* f^! b^*(\cF) ) \xrightarrow{f_!(\varepsilon^*_a \otimes \varepsilon^*_a)} f_!( f^* b^*(\cF) \otimes f^! b^*(\cF) ) \\
			\xrightarrow{\mathsf{Ex}^{!*}_\otimes(f)} f_!f^!(b^*(\cF) \otimes b^*(\cF)) \xrightarrow{\varepsilon^!_f} b^*(\cF \otimes \cF) \xrightarrow{b^*(m)} b^*(\cF) ,
		\end{multline*}
		where $\varepsilon^*_a$ and $\varepsilon^!_f$ denote respectively the counits of the adjunctions $a^* \dashv a_*$ and $f_! \dashv f^!$.
		
		\item The \emph{right composition product} is the map
		\begin{equation}\label{eq:right_composition}
			\circ \colon \CBM_\bullet(f/S,\cF) \otimes \CBM_\bullet(Y/S,\cF) \longrightarrow \CBM_\bullet(X/S,\cF)
		\end{equation}
		that corresponds under the adjunctions $a^* \dashv a_*$ and $a_! \dashv a^!$ to the composition of
		\begin{multline*}
			a_!( a^* a_* f^! b^*(\cF) \otimes a^* b_* b^!(\cF) )  \xrightarrow{a_!(\varepsilon^*_a \otimes \varepsilon^*_b)} a_!( f^! b^*(\cF) \otimes f^* b^!(\cF) ) \xrightarrow{\mathrm{Ex}^{!*}_\otimes(f)} a_!f^!( b^*(\cF) \otimes b^!(\cF)) \\
			\xrightarrow{\mathrm{Ex}^{!*}_\otimes(b)} a_! f^! b^!( \cF \otimes \cF ) \simeq a_! a^!(\cF \otimes \cF) \xrightarrow{\varepsilon^!_a} \cF \otimes \cF \xrightarrow{m} \cF ,
		\end{multline*}
		where $\varepsilon^*_a$, $\varepsilon^*_b$ and $\varepsilon^!_a$ denote respectively the counits of the adjunctions $a^* \dashv a_*$, $b^* \dashv b_*$ and  $a_! \dashv a^!$.
	\end{enumerate}
\end{definition}

\begin{remark}
	(1) In the special case where $f = \id_X$, the left composition product yields the \emph{cup product} on motivic cochains
	\[ \cup \colon \rC^\bullet(X/S,\cF) \otimes \rC^\bullet(X/S,\cF) \longrightarrow \rC^\bullet(X/S,\cF) ; \]
	the right composition morphism yields the \emph{cap product} on motivic (co)chains
	\[ \cap \colon \rC^\bullet(X/S,\cF) \otimes \CBM_\bullet(X/S,\cF) \longrightarrow \CBM_\bullet(X/S,\cF).\]
	Note that the cap product can also be obtained as a special case of the left composition product for $f = a$ and $b = \id_S$.
	
	\smallskip
	(2) Fix integers $r_1, r_2, s_1, s_2 \in \bbZ$, $r \coloneqq r_1 + r_2$ and $s \coloneqq s_1 + s_2$.
	Tensoring \eqref{eq:left_composition} with $1_S(r_1)[s_1] \otimes 1_S(r_2)[s_2]\simeq 1_S(r)[s]$, we obtain a map
	\[ \circ \colon \rC^\bullet(X/S)(r_1)[s_1] \otimes \CBM_\bullet(f/S)(r_2)[s_2] \longrightarrow \CBM_\bullet(f/S)(r)[s] . \]
	Taking $f = \id_X$, and applying the lax-monoidal functor $\pi_0\Map_{\RigSHet(S,\bbQ)}(1_S,-)$, we obtain the \emph{cup product} on motivic cohomologies
	\[ \cup \colon \rH^{s_1}(X,\cF(r_1)) \otimes \rH^{s_2}(X,\cF(r_2)) \longrightarrow \rH^s(X,\cF(r)) . \]
	On the other hand, tensoring \eqref{eq:right_composition} with $1_S(-r)[-s]$, we obtain a map
	\[ \circ \colon \CBM_\bullet(f/S,\cF)(-r_1)[-s_1] \otimes \CBM_\bullet(Y/S,\cF)(-r_2)[-s_2] \longrightarrow \CBM_\bullet(X/S,\cF)(-r)[-s] . \]
	Applying $\pi_0\Map_{\RigSHet(S,\bbQ)}(1_S,-)$, we obtain the \emph{right composition product} on motivic homologies
	\[ \circ \colon \HBM_{s_1}(X/Y,b^*(\cF)(r_1)) \otimes \HBM_{s_2}(Y/S,\cF(r_2)) \longrightarrow \HBM_{s}(X/S,\cF(r)) . \]
	Taking furthermore $f = \id_X$, we obtain the \emph{cap product} on motivic (co)homologies
	\[ \cap \colon \rH^{-s_1}(X,\cF(-r_1)) \otimes \HBM_{s_2}(X,\cF(r_2)) \longrightarrow \HBM_s(X/S,\cF(r)) . \]
\end{remark}

\begin{definition} \label{def:external_product}
	 Let
	\[ \begin{tikzcd}
		X \times_S Y \rar{p_1} \dar{p_2} \arrow{dr}{p} & Y \dar{b} \\
		X \rar{a} & S
	\end{tikzcd} \]
	be a pullback square of derived \kanal stacks.
	\begin{enumerate}[wide=0pt, itemsep=1ex]
		\item The \emph{external product} of motivic cochains
		\[ \boxtimes \colon \rC^\bullet(X/S,\cF) \otimes \rC^\bullet(Y/S,\cF) \longrightarrow \rC^\bullet(X\times_S Y / S,\cF) \]
		is defined as the composition of
		\begin{multline*}
			\rC^\bullet(X/S,\cF) \otimes \rC^\bullet(Y/S,\cF)  \xrightarrow{p_1^\ast \otimes p_2^\ast} \rC^\bullet(X \times_S Y/S,\cF) \otimes \rC^\bullet(X \times_S Y/S,\cF) \\
			\xrightarrow{\cup} \rC^\bullet(X\times_S Y/S,\cF) .
		\end{multline*}
		It induces the \emph{external product} of motivic cohomology
		\[ \boxtimes \colon \rH^{s_1}(X/S,\cF(r_1)) \otimes \rH^{s_2}(Y/S,\cF(r_2)) \longrightarrow \rH^{s_1 + s_2}(X \times_S Y / S, \cF(r_1 + r_2))\]
		for $r_1, r_2, s_1, s_2 \in \bbZ$.
	
		\item The \emph{external product} of motivic Borel-Moore chains is the map
		\[ \boxtimes \colon \CBM_\bullet(X/S,\cF) \otimes \CBM_\bullet(Y/S,\cF) \longrightarrow \CBM_\bullet(X \times_S Y / S, \cF) \]
		that corresponds under the adjunctions $p^* \dashv p_*$ and $p_! \dashv p^!$ to the composition of
		\begin{multline*}
			p_!( p^* a_* a^!(\cF) \otimes p^* b_* b^!(\cF) ) \to a_! p_{2,!}( p_2^* a^!(\cF) \otimes p_1^* b^!(\cF) ) \simeq a_!(a^!(\cF) \otimes a^* b_! b^!(\cF)) \\
			\to a_!(a^!(\cF) \otimes a^*(\cF)) \to a_! a^!(\cF) \to \cF .
		\end{multline*}
		It induces the \emph{external product} of motivic Borel-Moore homology
		\[ \boxtimes \colon \HBM_{s_1}(X/S,\cF(r_1)) \otimes \HBM_{s_2}(Y/S,\cF(r_2)) \longrightarrow \HBM_{s_1 + s_2}(X \times_S Y / S, \cF(r_1 + r_2)) \]
		for $r_1, r_2, s_1, s_2 \in \bbZ$.
	\end{enumerate}
\end{definition}

We refer to \cite{Mazza_Lecture_notes_on_motivic_cohomology,Deglise_Bivariant_theories,Khan_Virtual_fundamental_classes} for the basic properties of motivic Borel-Moore homologies, including base change formulas, functoriality of the composition product and projection formulas for the composition product (see in particular \cite[\S 2.3]{Khan_Virtual_fundamental_classes}).

The following Propositions \ref{prop:pullback_cap}--\ref{prop:external_product_pushforward} encode further compatibility relations between pullback, pushforward, composition product, cap product, external product of motivic cohomology and motivic Borel-Moore homology at the chain level.
All the proofs are similar, and rely on long but straightforward diagram chasing.
We will sketch the first proof and omit the rest.

\begin{proposition}\label{prop:pullback_cap}
	Let $S$ be a derived \kanal stack and let $f \colon X \to Y$ be a morphism of derived \kanal stacks over $S$.
	Then the diagram
	\[ \begin{tikzcd}
		\rC^\bullet(Y/S,\cF) \otimes \CBM_\bullet(f/S,\cF) \otimes \CBM_\bullet(Y/S,\cF) \rar{f^* \otimes \circ} \dar{\simeq} & \rC^\bullet(X/S,\cF) \otimes \CBM_\bullet(X/S,\cF) \arrow{dd}{\cap} \\
		\CBM_\bullet(f/S,\cF) \otimes \rC^\bullet(Y/S,\cF) \otimes \CBM_\bullet(Y/S,\cF) \dar{\id \otimes \cap} \\
		\CBM_\bullet(f/S,\cF) \otimes \CBM_\bullet(Y/S,\cF) \rar{\circ} & \CBM_\bullet(X/S,\cF)
	\end{tikzcd} \]
	is canonically commutative.
	\end{proposition}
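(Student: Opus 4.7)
The plan is to unfold both composites in the diagram using the definitions of pullback, cap product and right composition product from \cref{sec:motivic_homology}, and then to verify that the two resulting morphisms in $\RigSHet(S,\bbQ)$ agree by a diagram chase in the six-functor formalism. Writing $a\colon Y\to S$ and $c\coloneqq a\circ f\colon X\to S$, we have $\rC^\bullet(Y/S,\cF)=a_*a^*\cF$, $\CBM_\bullet(f/S,\cF)=c_*f^!a^*\cF$, $\CBM_\bullet(Y/S,\cF)=a_*a^!\cF$ and $\CBM_\bullet(X/S,\cF)=c_*c^!\cF$, so by adjunction the assertion reduces to comparing two natural maps out of a single triple tensor product of pullbacks of $\cF$ along $c$.

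After invoking the adjunction $c_!\dashv c^!$, the identification $c^*=f^*a^*$, and the counit $\varepsilon^*_a\colon a^*a_*\to\id$ to eliminate the $a_*\circ a^*$ pairs, both routes collapse into morphisms
\[c_!\bigl(f^*a^*\cF\otimes f^!a^*\cF\otimes f^*a^!\cF\bigr)\longrightarrow\cF\]
built from the exchange transformations $\mathrm{Ex}^{!*}_\otimes(f)$ and $\mathrm{Ex}^{!*}_\otimes(a)$, the counits $\varepsilon^!_f$ and $\varepsilon^!_a$ (which combine via the projection formula into $\varepsilon^!_c$), and the multiplication $m\colon\cF\otimes\cF\to\cF$. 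Exactly two copies of $m$ appear along each route: in the upper route the right composition $\beta\circ\gamma$ contributes one copy and the outer cap adds a second, while in the lower route the inner cap $\alpha\cap\gamma$ contributes one copy and the outer composition adds a second. In both cases the two copies of $m$ assemble into a single triple product $\cF^{\otimes 3}\to\cF$, which by the $\mathbb E_\infty$-structure on $\cF$ is unambiguous up to coherent homotopy. The remaining structural maps match thanks to the naturality of $\mathrm{Ex}^{!*}_\otimes$ with respect to the composition $c=a\circ f$ and the projection formula $c_!(c^*M\otimes N)\simeq M\otimes c_!N$.

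To organise the chase I would decompose the large diagram of natural transformations into elementary squares, each expressing either (i) the naturality of a unit or counit, (ii) the compatibility of $\mathrm{Ex}^{!*}_\otimes(-)$ with the projection formula for one of the adjunctions $f_!\dashv f^!$ or $a_!\dashv a^!$, or (iii) the associativity and commutativity of the $\mathbb E_\infty$-multiplication on $\cF$. I expect the main obstacle to be purely bookkeeping: formulas \eqref{eq:left_composition} and \eqref{eq:right_composition} are already long, and overlaying a further cap product roughly doubles the number of adjunction units and exchange morphisms to track. The only delicate point is to invoke $\varepsilon^*_a$ at the correct stage on each side so that the two routes genuinely produce the same morphism rather than differing by an auxiliary rearrangement; once this is done, no ideas beyond the standard six-functor formalism and the $\mathbb E_\infty$-coherence of $\cF$ are needed.
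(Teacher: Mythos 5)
Your proposal follows the paper's own argument essentially verbatim (modulo relabeling the structure maps): unfold the definitions, transpose across the adjunctions $c^*\dashv c_*$ and $c_!\dashv c^!$ to land in $\Map(c_!(\cdots),\cF)$, chase the resulting diagram of counits, exchange transformations and projection-formula equivalences, and observe that the two routes differ only in the order of applying $m\colon\cF\otimes\cF\to\cF$, which is reconciled by the $\mathbb E_\infty$-structure on $\cF$. The paper compresses the chase into the phrase ``a long but straightforward diagram chase'' and exhibits the same final associativity square for $m$, so the two proofs coincide.
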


\begin{proof}
	Expanding the definitions, we need to show that the square
	\[ \begin{tikzcd}
		b_* b^* \cF \otimes a_* f^! b^* \cF \otimes b_* b^! \cF \rar \dar & a_* a^* \cF \otimes a_* a^! \cF \dar \\
		a_* f^! b^* \cF \otimes b_* b^! \cF \rar & a_* a^! \cF
	\end{tikzcd} \]
	is commutative.
	Using the adjunctions $a^* \dashv a_*$ and $a_! \dashv a^!$, this is equivalent to the commutativity of the following diagram
	\[ \begin{tikzcd}
		a_!( a^* b_* b^* \cF \otimes a^* a_* f^! b^* \cF \otimes a^* b_* b^! \cF) \rar \dar & a_!( a^* a_* a^* \cF \otimes a^* a_* a^! \cF ) \dar \\
		a_!(a^* a_* f^! b^* \cF \otimes a^* b_* b^! \cF ) \rar & \cF .
	\end{tikzcd} \]
	A long but straightforward diagram chase reduces this question to the commutativity of the diagram
	\[ \begin{tikzcd}
		\cF \otimes \cF \otimes \cF \rar{\id \otimes m} \dar{m \otimes \id} & \cF \otimes \cF \dar{m} \\
		\cF \otimes \cF \rar{m} & \cF ,
	\end{tikzcd} \]
	which is indeed part of the given $\mathbb E_\infty$-structure of $\cF$.
\end{proof}

\begin{proposition} \label{prop:projection_formula_cap}
	Let $S$ be a derived \kanal stack and let $f \colon X \to Y$ be a proper morphism of derived \kanal stacks over $S$.
	Then the diagram
	\[ \begin{tikzcd}
		\rC^\bullet(Y/S,\cF) \otimes \CBM_\bullet(X/S,\cF) \rar{f^* \otimes \id} \arrow{dd}{\id \otimes f_*} & \rC^\bullet(X/S,\cF) \otimes \CBM_\bullet(X/S,\cF) \dar{\cap} \\
		& \CBM_\bullet(X/S,\cF) \dar{f_*} \\
		\rC^\bullet(Y/S,\cF) \otimes \CBM_\bullet(Y/S,\cF) \rar{\cap} & \CBM_\bullet(Y/S,\cF)
	\end{tikzcd} \]
	is canonically commutative.
	\end{proposition}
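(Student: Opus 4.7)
My plan is to follow the diagram-chase template established in the proof of Proposition 3.8, since the statement is precisely the six-functor enhancement of the classical projection formula $f_*(f^*(\alpha) \cap \beta) = \alpha \cap f_*(\beta)$. Write $a = b \circ f$ where $b \colon Y \to S$ is the structure map. First, I would unfold $f^*$, $f_*$ (using properness of $f$ to identify $f_! \simeq f_*$, via Lemma 2.1 in the stacky case), and the two cap products (as instances of the left composition product with target $f = a$, $b = \id_S$, respectively $X = Y$ and $a = b$), into their definitions via the six functors. This turns the outer boundary of the square into a comparison of two morphisms $b_*b^*\cF \otimes a_*a^!\cF \to b_*b^!\cF$ in $\RigSHet(S,\bbQ)$.

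Next, using the adjunctions $a^* \dashv a_*$, $b^* \dashv b_*$ and $b_! \dashv b^!$, the question reduces to the equality of two morphisms
\[ b_!\bigl(b^*b_*b^*\cF \otimes b^*a_*a^!\cF\bigr) \longrightarrow \cF \]
built from the counits $\varepsilon^*_a$, $\varepsilon^*_b$, $\varepsilon^!_b$, the exchange transformation $\mathrm{Ex}^{!*}_\otimes$, and the multiplication $m$. The bridge between the two sides is the projection formula
\[ f_!\bigl(f^!(\cM) \otimes f^*(\cN)\bigr) \simeq f_!f^!(\cM) \otimes \cN, \]
which transports the cap-product construction on $X$ across $f_*$. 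After invoking this and the identification $f_! \simeq f_*$, a long but routine diagram chase shows that both paths in the square collapse, via naturality of the exchange transformations and the unit/counit zig-zags, to the associativity square for $m \colon \cF \otimes \cF \to \cF$, which commutes by the given $\mathbb{E}_\infty$-structure on $\cF$.

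The main obstacle is purely bookkeeping: one must carefully track the interaction of the various counits with the exchange transformations and verify naturality at each node of the large diagram. No new external input is required beyond (i) the projection formula recalled above, (ii) the equivalence $f_! \simeq f_*$ for proper $f$ (Lemma 2.1 in the stacky case), and (iii) the associativity coherence of $m$, all of which are already in play in Proposition 3.8. Accordingly, I would expect the proof to be declared \emph{verbatim} parallel to the preceding one and to reference the same diagram-chase template.
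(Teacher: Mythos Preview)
Your proposal is correct and matches the paper exactly: the paper explicitly omits this proof, stating that it and the next three propositions all follow by the same long but straightforward diagram-chase template sketched for Proposition~\ref{prop:pullback_cap}. The only minor inaccuracy is that, since only two factors are being multiplied here rather than three, the chase terminates at a compatibility between a single application of $m$ and the projection-formula equivalence $f_!(f^!(-)\otimes f^*(-))\simeq f_!f^!(-)\otimes(-)$ rather than at the associativity square for $m$---but this is a cosmetic detail you would discover immediately upon actually carrying out the chase.
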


\begin{proposition}\label{prop:projection_formula_Gysin}
	Let $S$ be a derived \kanal stack and let $f \colon X \to Y$ be a proper morphism of derived \kanal stacks over $S$.
	Then the diagram
	\[ \begin{tikzcd}
		\rC^\bullet(X/S,\cF) \otimes \CBM_\bullet(f/S,\cF) \otimes \CBM_\bullet(Y/S,\cF) \rar{\id \otimes \circ} \dar{\circ \otimes \id} & \rC^\bullet(X/S,\cF) \otimes \CBM_\bullet(X/S,\cF) \dar{\cap} \\
		\CBM_\bullet(f/S,\cF) \otimes \CBM_\bullet(Y/S) \dar{f_* \otimes \id} & \CBM_\bullet(X/S) \dar{f_*} \\
		\rC^\bullet(Y/S,\cF) \otimes \CBM_\bullet(Y/S,\cF) \rar{\cap} & \CBM_\bullet(Y/S,\cF)
	\end{tikzcd} \]
	is canonically commutative.
	\end{proposition}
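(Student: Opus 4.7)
The plan is to expand the three products (left composition, right composition, cap) and the proper pushforward into their definitions in terms of units/counits of the adjunctions $a^*\dashv a_*$, $b^*\dashv b_*$, $f_!\dashv f^!$, $a_!\dashv a^!$, the exchange transformations $\mathsf{Ex}^{!*}_\otimes$ for $f$ and $b$, and the multiplication $m\colon\cF\otimes\cF\to\cF$. Having done this, both sides of the square will become long composites of canonical maps between functors applied to tensor powers of $\cF$, and the goal becomes to show that these composites are the same map in $\RigSHet(S,\bbQ)$.

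First I would use adjunction to transpose the whole diagram so as to work in $\RigSHet(Y,\bbQ)$ or in $\RigSHet(S,\bbQ)$, following exactly the template of \cref{prop:pullback_cap}: namely, apply $a^*\dashv a_*$ and $a_!\dashv a^!$ to bring everything down to maps between objects built from $\cF^{\otimes 3}$ and $\cF$. The crucial input, which distinguishes this proposition from the previous two, is the identification $f_!\simeq f_*$ coming from the properness of $f$; this is what lets us interchange the ``$f^!$ then pushforward'' half of $\CBM_\bullet(f/S,\cF)$ with the cap product on $X$ occurring in Path~1.

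Next I would do the diagram chase in stages. For Path~1, $f_*\circ\cap\circ(\id\otimes\circ)$ unfolds into a composite that starts from $b_*b^*\cF\otimes a_*f^!b^*\cF\otimes b_*b^!\cF$; I would push the $a_*$ and the $f_*=f_!$ together using the identification $a_!=b_!f_!$, apply the projection formula for $f$ twice to pull $f^*$ inside, and land on a term $b_!(b^*\cF\otimes b^*\cF\otimes b^!\cF)$ followed by an application of $m\otimes m$ in some order. For Path~2, $\cap\circ(f_*\otimes\id)\circ(\circ\otimes\id)$ unfolds similarly; here the $f_*$ in the middle is exactly the map from \cref{rem:motivic_homology_functoriality}(3), which after rewriting in terms of $f_!$ combines with the $\mathsf{Ex}^{!*}_\otimes(f)$ already present in the definition of the left composition product to yield the same intermediate object. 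At the end both paths reduce to one single composite $b_!(b^*\cF\otimes b^*\cF\otimes b^!\cF)\to b_!b^!\cF\to\cF$ whose definition involves multiplying three copies of $\cF$; commutativity of the outer square then follows from the associativity and commutativity of $m$, i.e.\ from the $\mathbb E_\infty$-structure on $\cF$, exactly as in \cref{prop:pullback_cap}.

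The main obstacle is the bookkeeping of the exchange transformations $\mathsf{Ex}^{!*}_\otimes(f)$ and $\mathsf{Ex}^{!*}_\otimes(b)$, together with the various base-change isomorphisms and projection formulas, and ensuring that the order in which they are applied on the two sides agrees up to naturality. In particular, the one genuinely non-formal step is verifying that the proper pushforward $f_*=f_!\colon\CBM_\bullet(f/S,\cF)\to\rC^\bullet(Y/S,\cF)$ intertwines $\mathsf{Ex}^{!*}_\otimes(f)$ with the corresponding counit $\varepsilon^!_f$, which is where properness enters essentially. Once this is set up, the remainder is a long but mechanical diagram chase, strictly parallel to the proof of \cref{prop:pullback_cap}, which the authors have already signalled they will omit.
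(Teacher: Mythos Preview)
Your proposal is correct and matches the paper's approach exactly: the paper omits the proof of this proposition, stating explicitly that it is a long but straightforward diagram chase parallel to that of \cref{prop:pullback_cap}, reducing ultimately to the associativity/commutativity of $m$ coming from the $\mathbb E_\infty$-structure on $\cF$. One small slip: in your unfolding of Path~1 the first tensor factor should be $a_*a^*\cF$ (since it is $\rC^\bullet(X/S,\cF)$), not $b_*b^*\cF$; otherwise your outline is on target.
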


\begin{proposition}\label{prop:external_product_pullback}
	Let
	\[ \begin{tikzcd}
		X \times_S Y \rar{p_1} \dar{p_2} \arrow{dr}{p} & Y \dar{b} \\
		X \rar{a} & S
	\end{tikzcd} \]
	be a pullback square of derived \kanal stacks.
	Then the diagrams
	\[ \begin{tikzcd}
		\CBM_\bullet(X/S,\cF) \otimes \CBM_\bullet(Y/S,\cF) \rar{p_2^* \otimes \id} \dar{\id \otimes p_1^*} \arrow{dr}{\boxtimes} & \CBM_\bullet(p_1/S,\cF) \otimes \CBM_\bullet(Y/S,\cF) \dar{\circ} \\
		\CBM_\bullet(X/S,\cF) \otimes \CBM_\bullet(p_2/S,\cF) \rar{\circ} & \CBM_\bullet(X\times_S Y/S,\cF)
	\end{tikzcd} \]
	and
	\[ \begin{tikzcd}
		\rC^\bullet(X/S,\cF) \otimes \CBM_\bullet(X/S,\cF) \otimes \rC^\bullet(Y/S,\cF) \otimes \CBM_\bullet(Y/S,\cF) \dar{\simeq} \rar{\cap \otimes \cap} & \CBM_\bullet(X/S,\cF) \otimes \CBM_\bullet(Y/S,\cF) \arrow{dd}{\boxtimes} \\
		\rC^\bullet(X/S,\cF) \otimes \rC^\bullet(Y/S,\cF) \otimes \CBM_\bullet(X/S,\cF) \otimes \CBM_\bullet(Y/S,\cF) \dar{\boxtimes \otimes \boxtimes} \\
		\rC^\bullet(X \times_S Y/S,\cF) \otimes \CBM_\bullet(X \times_S Y/S,\cF) \rar{\cap} & \CBM_\bullet(X \times_S Y/S,\cF)
	\end{tikzcd} \]
	are canonically commutative.
	\end{proposition}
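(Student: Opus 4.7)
My plan is to follow the same blueprint as the proof of \cref{prop:pullback_cap}: unfold the definitions of $\boxtimes$, $\circ$ and $\cap$ in terms of the six functors, use the adjunctions $a_!\dashv a^!$, $b_!\dashv b^!$, $p_!\dashv p^!$, $p^*\dashv p_*$, $a^*\dashv a_*$, $b^*\dashv b_*$ to transport both sides of each square to maps with common source landing in $\cF$ over $S$, and then reduce the comparison to the $\mathbb E_\infty$-coherence of the multiplication $m\colon\cF\otimes\cF\to\cF$ together with the formal compatibilities of the exchange transformation $\mathrm{Ex}^{!*}_\otimes$ with composition and base change, the projection formula, and proper base change $a^*b_*\simeq p_{2,*}p_1^*$ (and its transpose).

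For the upper triangle, I would expand each $\circ$ via \eqref{eq:right_composition} and expand $\boxtimes$ via \cref{def:external_product}. Both composites
\[ \circ \circ (p_2^*\otimes \id)\quad\text{and}\quad \circ \circ (\id\otimes p_1^*) \]
land in $\CBM_\bullet(X\times_S Y/S,\cF)=p_*p^!(\cF)$, so passing across the adjunctions they correspond to maps $p_!\bigl(p^*a_*a^!(\cF)\otimes p^*b_*b^!(\cF)\bigr)\to\cF$. Using the unit morphisms $p^*a_*\Rightarrow p_{2,*}a^*a_*\Rightarrow p_{2,*}$ together with base change, both paths rewrite through the common intermediate $p_!\bigl(p_2^*a^!(\cF)\otimes p_1^*b^!(\cF)\bigr)$, and from there through $p_!p^!(\cF\otimes\cF)\xrightarrow{\varepsilon^!_p}\cF\otimes\cF\xrightarrow{m}\cF$ using $p=a\circ p_2=b\circ p_1$ and the compatibility of $\mathrm{Ex}^{!*}_\otimes$ with these factorizations. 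The resulting triangle then coincides with the unfolded definition of $\boxtimes$ in \cref{def:external_product}, giving commutativity.

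For the lower square, expand both $\cap$s via the special case $f=a$, $b=\id_S$ of \eqref{eq:left_composition} and both $\boxtimes$s via \cref{def:external_product}. Transposing through the same adjunctions, both paths become maps
\[ p_!\bigl(p^*a_*a^*\cF\otimes p^*a_*a^!\cF\otimes p^*b_*b^*\cF\otimes p^*b_*b^!\cF\bigr)\longrightarrow \cF. \]
Repeated application of counits, the projection formula, and $\mathrm{Ex}^{!*}_\otimes$ compatibility with $p=a\circ p_2=b\circ p_1$ rewrites both composites as an iterated multiplication $\cF^{\otimes 4}\to \cF$ bracketed in the two different orders corresponding to ``$(\cap)\boxtimes(\cap)$'' versus ``$(\boxtimes)\cap(\boxtimes)$''. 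Associativity and symmetry of $m$, given by the $\mathbb E_\infty$-structure, identify the two. The main obstacle is purely bookkeeping: correctly tracking the naturality of $\mathrm{Ex}^{!*}_\otimes$ under horizontal and vertical composition of morphisms, and under base change, so that the diagram chase actually reduces to manipulations inside $\cF^{\otimes n}$; this is exactly the ``long but straightforward'' diagram chase that the authors allude to in the paragraph preceding \cref{prop:pullback_cap}.
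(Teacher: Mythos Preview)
Your proposal is correct and follows essentially the same approach as the paper: the paper explicitly omits the proof of this proposition, stating that it is similar to that of \cref{prop:pullback_cap} and relies on the same ``long but straightforward diagram chasing'', which is precisely the blueprint you outline. Your identification of the key ingredients (adjunctions, projection formula, proper base change, compatibility of $\mathrm{Ex}^{!*}_\otimes$ with composition, and reduction to the $\mathbb E_\infty$-coherence of $m$) matches what the sketched proof of \cref{prop:pullback_cap} uses.
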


\begin{proposition} \label{prop:external_product_pushforward}
	Let
	\[ \begin{tikzcd}
		X'\times_S Y' \arrow{dr}{h} \arrow{rr} \arrow{dd} & & Y' \dar{g} \\
		& X \times_S Y \rar{p_1} \dar{p_2} \arrow{dr}{p} & Y \dar{b} \\
		X' \rar{f} & X \rar{a} & S
	\end{tikzcd} \]
	be a commutative diagram of derived \kanal stacks where the two squares are cartesian, and the morphisms $f$, $g$, $h$ are proper.
	Then the diagram
	\[ \begin{tikzcd}
		\CBM_\bullet(X'/S,\cF) \otimes \CBM_\bullet(Y'/S,\cF) \rar{f_* \otimes g_*} \dar{\boxtimes} & \CBM_\bullet(X/S,\cF) \otimes \CBM_\bullet(Y/S,\cF) \dar{\boxtimes} \\
		\CBM_\bullet(X' \times_S Y'/S, \cF) \rar{h_*} & \CBM_\bullet(X \times_S Y / S, \cF )
	\end{tikzcd} \]
	is canonically commutative.
\end{proposition}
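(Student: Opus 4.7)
The plan is to unfold the definition of the external product via its adjoint presentation and then reduce the claim to an identity in the six-functor formalism, which is ultimately verified by the standard base change and projection formula coherences applied to the cartesian squares in the diagram.

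First, since $f$, $g$, $h$ are all proper, I would apply \cref{lem:alpha_f} (or the appropriate relative analog) to identify $f_* \simeq f_!$, $g_* \simeq g_!$, $h_* \simeq h_!$, so that the proper pushforward morphisms become counit morphisms for the adjunctions $f_! \dashv f^!$, $g_! \dashv g^!$, $h_! \dashv h^!$. Label the various structure maps $a \colon X \to S$, $b \colon Y \to S$, $a' \coloneqq a \circ f$, $b' \coloneqq b \circ g$, $p \colon X \times_S Y \to S$, $p' \coloneqq p \circ h$, and denote by $q_1, q_2$ the projections from $X' \times_S Y'$. Then both compositions in the square become, after applying the $p_! \dashv p^!$ adjunction, morphisms of the form
\[
p_!\bigl( p^* a'_* a'^!(\cF) \otimes p^* b'_* b'^!(\cF) \bigr) \longrightarrow \cF,
\]
and the task is to show that these two morphisms agree.

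Next, I would rewrite both sides using the fact that $a'_* = a_* f_!$ and $b'_* = b_* g_!$, and push the pullbacks $p^*$ inside using the base change equivalences associated to the two cartesian squares in the statement together with the large cartesian square $p' = p \circ h$. The crucial exchange transformations here are the base changes $p^* a_* \simeq p_{2,*} p_1^*$ and $p^* b_* \simeq p_{1,*} p_2^*$ (available because these come from cartesian squares of derived stacks and pushforward along proper maps satisfies base change in this framework; the non-proper case would use the $!$-version, which is also available), together with their analogs for the outer diagram involving $X', Y', X' \times_S Y'$. After this rewriting, both morphisms reduce to a chain of manipulations involving projection formulas $p_!(A \otimes p^*B) \simeq p_!A \otimes B$, the exchange $\mathrm{Ex}^{!*}_\otimes$, and a single final application of the multiplication $m \colon \cF \otimes \cF \to \cF$.

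The bulk of the work will then be a diagram chase verifying that these two assembled morphisms coincide; this is entirely formal once one has laid out the coherences correctly, but as the authors note it is long. The main obstacle I anticipate is bookkeeping: one must carefully track in what order the various counits ($\varepsilon^!_a$, $\varepsilon^!_b$, $\varepsilon^!_p$, $\varepsilon^!_f$, $\varepsilon^!_g$, $\varepsilon^!_h$) and the $\mathbb E_\infty$-multiplication $m$ are applied, and show that the two orderings match via the naturality of $\mathrm{Ex}^{!*}_\otimes$ and the compatibility of base change with the projection formula (the hexagon coherence). No new input is needed beyond what is already invoked in the proof of \cref{prop:pullback_cap}, and in fact this proposition fits into the same pattern: after the reductions above, commutativity follows from the associativity of $m$ combined with the fact that $\mathrm{Ex}^{!*}_\otimes$ and the projection formula intertwine pullback and proper pushforward coherently across a cartesian square.
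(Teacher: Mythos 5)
Your overall strategy matches the one the paper intends for this proposition (the paper proves only \cref{prop:pullback_cap} and declares the remaining propositions of this family similar): unfold via the adjunctions $p^* \dashv p_*$ and $p_! \dashv p^!$, rewrite with base change and the projection formula, and reduce by a diagram chase to the $\mathbb E_\infty$-coherence of $\cF$.

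There is, however, a concrete misstep. The claimed ``base changes $p^* a_* \simeq p_{2,*} p_1^*$ and $p^* b_* \simeq p_{1,*} p_2^*$'' are not actual base change equivalences: since $p = a \circ p_2$, one has $p^* a_* = p_2^*\, a^* a_*$, and $a^* a_*$ is not an equivalence --- there is only a counit $a^* a_* \to \id$. The genuine base change transformation attached to the cartesian square with vertices $X\times_S Y$, $X$, $Y$, $S$ is the exceptional one, $a^* b_! \xrightarrow{\ \sim\ } p_{2,!}\, p_1^*$. That equivalence, together with the counits $\varepsilon^*_a$, $\varepsilon^*_b$ and the projection formula for $p_{2,!}$, is exactly what \cref{def:external_product} uses; carrying out the chase literally with the formula you wrote would fail. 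A smaller point: citing \cref{lem:alpha_f} to identify $f_*$ with $f_!$ is misplaced --- that lemma treats a specific class of maps (generic fibers of proper maps of formal Deligne--Mumford stacks), whereas the identification $f_* \simeq f_!$ for proper $f$ is already baked into the definition of the chain-level proper pushforward itself, so what you actually want is simply to unfold that definition rather than to re-derive the identification. With these two corrections, the reduction to the associativity of $m$ should go through as you describe.
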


\section{Virtual fundamental classes of derived stacks} \label{sec:vfc}

In this section, we sketch the general construction of virtual fundamental classes for lci derived non-archimedean analytic stacks, which is parallel to the construction in the algebraic case by Khan \cite{Khan_Virtual_fundamental_classes}.
The idea is to adapt the Gysin map in classical intersection theory to the derived setting, via deformation to the shifted tangent bundle by Khan and Rydh \cite{Khan_Virtual_Cartier_divisors}.

Let us first recall the notion of derived lci from \cite[\S 2]{Porta_Yu_Non-archimedean_quantum_K-invariants}.

\begin{definition}
	A morphism $f\colon X\to Y$ of derived \kanal stacks is called \emph{derived lci} if its analytic cotangent complex $\anL_{X/Y}$ is perfect and in tor-amplitude $(-\infty,1]$.
	The \emph{relative virtual dimension} at any geometric point $x\in\trunc(X)$ is the Euler characteristic of the pullback of $\anL_{X/Y}$.
	It is locally constant by \cite[Lemma 2.11]{Porta_Yu_Non-archimedean_quantum_K-invariants}.
\end{definition}

We will sometimes say \emph{lci} for short because derived lci is equivalent to lci in the underived case by \cite[Lemma 2.4]{Porta_Yu_Non-archimedean_quantum_K-invariants}.
Derived lci is also commonly referred to as quasi-smooth, but we avoid this terminology due to possible confusion with other notions in Berkovich geometry.

\begin{definition} \label{def:normal_bundle_stack}
	For any lci morphism $X\to Y$ of derived \kanal stacks, the \emph{1-shifted tangent bundle} is the vector bundle stack
	\[\bbT_{X/Y}[1]\coloneqq\bV_X(\anL_{X/Y}[-1])\]
	over $X$.
\end{definition}

There is a canonical deformation from $X\to Y$ to the 0-section $X\to \bbT_{X/Y}[1]$.

\begin{theorem}[{\cite[Theorem 1.3]{Khan_Virtual_fundamental_classes}}] \label{thm:deformation_to_tangent}
	There exists an lci derived \kanal stack $D_{X/Y}$ over $Y\times\bA^1$, and an lci morphism
	\[X\times\bA^1\longto D_{X/Y}\]
	over $Y\times\bA^1$.
	The fiber over $\bGm=\bA^1\setminus 0$ is $X\times\bGm\to Y\times\bGm$, and the fiber over $0\in\bA^1$ is the 0-section $X\to\bbT_{X/Y}[1]$.
\end{theorem}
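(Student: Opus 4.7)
The plan is to transfer Khan--Rydh's algebraic construction to the derived non-archimedean analytic setting. The space $D_{X/Y}$ will be realized as a derived analytic blow-up of $Y \times \bA^1$ along $X \hookrightarrow Y \simeq Y \times \{0\}$, with the strict transform of $Y \times \{0\}$ removed; the identification of the two fibers then follows by unwinding the universal property of the derived blow-up, exactly as in the algebraic case.

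First I would reduce to a local situation. The statement is smooth-local on $Y$ and Zariski-local on $X$, so I may assume $Y$ is affinoid and, using the lci hypothesis, that $f$ factors as a derived closed immersion $i \colon X \hookrightarrow P$ followed by a smooth morphism $p \colon P \to Y$ (étale-locally, a presentation of $\anL_{X/Y}[1]$ by a vector bundle yields the required factorization). Given such a factorization, it suffices to build the deformation space $D_{X/P}$ and transfer it along $p$, since smooth morphisms are compatible with the $1$-shifted tangent bundle up to a trivial smooth factor that does not affect either fiber.

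For the closed-immersion case, I would import Khan--Rydh's derived blow-up construction. Its universal property in terms of virtual Cartier divisors makes sense verbatim for derived \kanal stacks; representability is the key analytic input, and one applies the Porta--Yu representability theorem to the moduli functor of virtual Cartier divisors, with the obstruction theory provided by the analytic cotangent complex of the universal virtual divisor, and with infinitesimal cohesion and formal effectivity transferring from the algebraic case. Removing the strict transform of $P \times \{0\}$ yields $D_{X/P}$, and the universal property immediately identifies the fiber over $\bGm$ with $P \times \bGm$ and the fiber over $0 \in \bA^1$ with the $1$-shifted tangent bundle $\bbT_{X/P}[1]$.

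The hard part is the verification of these representability hypotheses, in particular formal effectivity, which is typically the delicate step in analytic geometry. A robust alternative route, when available, is descent to a formal $\kc$-model: if $f$ arises from a morphism $\mathfrak X \to \mathfrak Y$ of locally finitely presented formal $\kc$-stacks, one performs Khan--Rydh's construction in the formal-algebraic category and then takes the non-archimedean generic fiber, invoking Porta--Yu's analytification machinery to match universal properties. A gluing argument over an admissible cover by formal models handles the general case, with no new obstacles beyond those already addressed in derived non-archimedean analytic geometry.
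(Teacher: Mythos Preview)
The paper does not prove this theorem at all: it is stated with a bare citation to Khan's algebraic result and is then used as a black box. In particular, the paper offers no argument for why Khan--Rydh's deformation space exists in the derived \kanal setting; it simply asserts the analytic analogue and moves on. So there is no ``paper's proof'' to compare against --- you are attempting to supply what the authors chose to omit.

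Your sketch is a reasonable outline of how one would actually justify the transfer, and you correctly flag the genuine obstacle: representability of the derived blow-up (specifically formal effectivity) is the non-trivial analytic input, and the Porta--Yu representability theorem is the natural tool. A few cautionary remarks. First, your local factorization $X \hookrightarrow P \to Y$ with $i$ a closed immersion and $p$ smooth is not automatic for an arbitrary derived lci morphism of derived \kanal stacks; in the algebraic setting Khan--Rydh handle this via relative Weil restriction rather than by assuming a global factorization, and you would need an analytic substitute. Second, the formal-model route only applies to algebraizable situations, and the ``gluing over an admissible cover by formal models'' step hides real work: derived blow-ups are not obviously local on the base in the na\"ive sense, and you would need to check that the analytic derived blow-up satisfies the same descent that makes the construction glue. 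Third, a more direct path you do not mention is the explicit Rees-type description of $D_{X/Y}$ (as in Khan--Rydh's proof), which in a local chart is given by a concrete relative $\Spec$ of a filtered symmetric algebra; this avoids the representability machinery entirely at the cost of checking by hand that the local pieces glue, and may be the lightest way to port the construction to the analytic category.
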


Now fix a derived \kanal stack $S$, a coefficient $\cF\in\RigSHet(S,\bbQ)$, and $f\colon X\to Y$ an lci morphism of derived \kanal stacks over $S$ of relative virtual dimension $d$.

The deformation to the 1-shifted tangent bundle in \cref{thm:deformation_to_tangent} gives rise to a \emph{specialization map}
\[\rsp_{X/Y}\colon\HBM_s(Y/S,\cF(r))\longto\HBM_s(\bbT_{X/Y}[1]/S,\cF(r))\quad\text{for every }r,s\in\bbZ.\]
By the homotopy invariance for vector bundles, the target is identified with $\HBM_{s+2d}(X/S,\cF(r+d))$.
So we obtain a canonical map
\[\varphi\colon\HBM_s(Y/S,\cF(r))\longto\HBM_{s+2d}(X/S,\cF(r+d)).\]

\begin{definition} \label{def:virtual_fundamental_class}
	The \emph{virtual fundamental class} of $f\colon X\to Y$ is the class
	\[[X/Y]\coloneqq \varphi(1)\in\HBM_{2d}(X/Y,\cF(d))\]
	where $1\in\HBM_0(Y/Y,\cF)$, and $d$ is the relative virtual dimension of $f$.
\end{definition}

\begin{definition} \label{def:Gysin_pullback}
	The \emph{Gysin pullback} is the morphism
	\[ f^! \colon \CBM_\bullet(Y/S,\cF) \longrightarrow \CBM_\bullet(X/S,\cF(-d))[-2d] \]
	given by the right composition product with $[X/Y]$ on the left.
	For $r,s \in \bbZ$, this induces the \emph{Gysin pullback} for motivic Borel-Moore homology
	\[ f^! \colon \HBM_s(Y/S,\cF(r)) \longrightarrow \HBM_{s+2d}(X/S,\cF(r+d)) . \]
\end{definition}

We refer to \cite[\S 3]{Khan_Virtual_fundamental_classes} for more details on virtual fundamental classes.
Two important properties of the virtual fundamental class construction (or equivalently of the Gysin map) are functoriality and base change, which we describe in the following two propositions.
They are analogous to \cite[Propositions 7.2 and 7.5]{Behrend_Intrinsic_normal_cone} in the classical approach via perfect obstruction theory.

\begin{proposition}[Functoriality] \label{prop:vfc_functoriality}
	Let $f\colon X\to Y$ and $g\colon Y\to Z$ be lci morphisms of derived \kanal stacks of relative virtual dimensions $d$ and $e$ respectively.
	We have $f^! \circ g^! = (g\circ f)^!$, and
	\[[X/Y]\circ[Y/Z]=[X/Z] \in \HBM_{2d+2e}(X/Z,\cF(d+e)).\]
\end{proposition}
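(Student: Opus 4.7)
The plan is to reduce the statement on Gysin pullbacks to the statement on virtual fundamental classes, and then prove the latter via a double deformation argument modeled on the construction of \cite{Khan_Virtual_fundamental_classes}.

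First, I would deduce $f^!\circ g^!=(g\circ f)^!$ from $[X/Y]\circ[Y/Z]=[X/Z]$. By \cref{def:Gysin_pullback}, $g^!$ is right composition with $[Y/Z]$ and $f^!$ is right composition with $[X/Y]$. Invoking associativity and functoriality of the right composition product (part of the formalism recalled just before \cref{prop:pullback_cap} and discussed in \cite[\S 2.3]{Khan_Virtual_fundamental_classes}), the composite $f^!\circ g^!$ equals right composition with $[X/Y]\circ[Y/Z]$. Thus once the equality of classes is established, the equality of Gysin maps follows formally.

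To prove $[X/Y]\circ[Y/Z]=[X/Z]$, I would use the cotangent fiber sequence $f^*\anL_{Y/Z}\to\anL_{X/Z}\to\anL_{X/Y}$, which in view of \cref{def:normal_bundle_stack} gives a fiber sequence of vector bundle stacks
\[\bbT_{X/Y}[1]\longrightarrow\bbT_{X/Z}[1]\longrightarrow f^*\bbT_{Y/Z}[1]\]
over $X$. Using \cref{thm:deformation_to_tangent} iteratively, I would construct a double deformation space $D_{X/Y/Z}$ over $Y\times\bA^1\times\bA^1$, with an lci morphism $X\times\bA^1\times\bA^1\to D_{X/Y/Z}$ over $Z\times\bA^1\times\bA^1$, such that the fiber over the open torus $\bGm\times\bGm$ recovers $X\to Y\to Z$, the fiber over $\{0\}\times\bGm$ recovers $X\to\bbT_{X/Y}[1]\to Y\to Z$, the fiber over $\bGm\times\{0\}$ recovers $X\to Y\to\bbT_{Y/Z}[1]$, and the fiber over $(0,0)$ recovers the two-step degeneration $X\to\bbT_{X/Y}[1]\to f^*\bbT_{Y/Z}[1]$, whose total composition agrees with the $0$-section of $\bbT_{X/Z}[1]$ via the fiber sequence above. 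Concretely, one can build $D_{X/Y/Z}$ by applying \cref{thm:deformation_to_tangent} first to $f$ to obtain $D_{X/Y}\to Y\times\bA^1$, and then to the resulting morphism $D_{X/Y}\to Z\times\bA^1$ along the second $\bA^1$-direction, matching the construction of \cite[\S 3.4]{Khan_Virtual_fundamental_classes} in the algebraic setting.

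Applying the specialization map in each $\bA^1$-direction separately, and invoking homotopy invariance for vector bundle stacks, one obtains
\[\rsp_{X/Y}\circ\rsp_{Y/Z}=\rsp_{X/Z}\]
as maps between the relevant Borel-Moore homology groups, after the canonical identifications induced by the fiber sequence of shifted tangent bundles. Evaluating both sides on $1\in\HBM_0(Z/S,\cF)$ yields $[X/Y]\circ[Y/Z]=[X/Z]$ in $\HBM_{2d+2e}(X/Z,\cF(d+e))$, where the composition on the left is the right composition product of \eqref{eq:right_composition}.

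The main obstacle is the construction and analysis of the double deformation space: one must verify that the two iterated specializations commute and that the limit at $(0,0)$ is correctly identified with $\bbT_{X/Z}[1]$ via the cotangent fiber sequence. In the algebraic setting this is carried out in \cite{Khan_Virtual_Cartier_divisors,Khan_Virtual_fundamental_classes}; the non-archimedean version proceeds identically thanks to the six-functor formalism of \cite{Ayoub_The_six} and the derived analytic deformation to the tangent bundle recalled in \cref{thm:deformation_to_tangent}.
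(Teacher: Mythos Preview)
The paper does not supply its own proof of this proposition: it is stated immediately after the sentence ``We refer to \cite[\S 3]{Khan_Virtual_fundamental_classes} for more details on virtual fundamental classes,'' and no proof environment follows. Your sketch is essentially a reconstruction of Khan's argument in that reference (the double deformation to the shifted tangent bundle, together with associativity of the composition product), so your approach coincides with the one the paper defers to.

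One small point of imprecision: writing $\rsp_{X/Y}\circ\rsp_{Y/Z}=\rsp_{X/Z}$ is a slight abuse, since the specialization maps have different sources and targets and do not literally compose; what the double deformation actually gives is that specializing first in one $\bA^1$-direction and then in the other agrees (after homotopy invariance) with the single specialization for $g\circ f$, and it is this compatibility that yields $[X/Y]\circ[Y/Z]=[X/Z]$. Also, the unit $1$ on which you evaluate lives in $\HBM_0(Z/Z,\cF)$ rather than $\HBM_0(Z/S,\cF)$, matching \cref{def:virtual_fundamental_class}. Neither of these affects the substance of your argument.
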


\begin{proposition}[Base change] \label{prop:vfc_base_change}
	Let
	\[\begin{tikzcd}
		X' \rar \dar & Y' \dar{q} \\
		X \rar{f} & Y
	\end{tikzcd}\]
	be a cartesian square of derived \kanal stacks, where $f$ is derived lci of relative virtual dimension $d$.
	We have
	\[q^*[X/Y]=[X'/Y']\in\HBM_{2d}(X'/Y',\cF(d)) , \]
	where $q^*$ denotes the base change morphism of \cref{rem:motivic_homology_functoriality}(\ref{rem:motivic_homology_functoriality:base_change}).
\end{proposition}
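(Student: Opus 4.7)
The plan is to reduce the base change statement for $[X/Y]$ to a base change statement for the specialization map, and from there to functorial properties of the deformation space $D_{X/Y}$.

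First I would establish the base change compatibility of the deformation to the 1-shifted tangent bundle. Given the cartesian square, the cotangent complex satisfies $\anL_{X'/Y'} \simeq g^*\anL_{X/Y}$, where $g\colon X' \to X$ is the induced map; hence $\bbT_{X'/Y'}[1] \simeq \bbT_{X/Y}[1] \times_X X'$. The construction of $D_{X/Y}$ in \cite{Khan_Virtual_Cartier_divisors} is natural in $(X,Y)$, and because it is characterized by a universal property compatible with base change on $Y$, one obtains a cartesian square
\[\begin{tikzcd}
D_{X'/Y'} \rar \dar & D_{X/Y} \dar \\
Y' \times \bA^1 \rar & Y \times \bA^1
\end{tikzcd}\]
respecting the two ends (over $\bGm$ and over $0$) of the deformation.

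Next I would show that the specialization map is compatible with $q^*$. By functoriality of Gysin and proper base change for the six operations on rigid analytic motives, the base change morphism $q^*$ from \cref{rem:motivic_homology_functoriality}(\ref{rem:motivic_homology_functoriality:base_change}) commutes with the Gysin pullback along the inclusion of the fiber over $0 \in \bA^1$ and with the pullback along the inclusion of the fiber over $\bGm$, since both are derived lci morphisms and both squares defining these inclusions base change correctly by the previous step. Combining these, one gets a commutative square
\[\begin{tikzcd}
\HBM_s(Y/S,\cF(r)) \rar{\rsp_{X/Y}} \dar{q^*} & \HBM_s(\bbT_{X/Y}[1]/S,\cF(r)) \dar{q^*} \\
\HBM_s(Y'/S,\cF(r)) \rar{\rsp_{X'/Y'}} & \HBM_s(\bbT_{X'/Y'}[1]/S,\cF(r)).
\end{tikzcd}\]
Homotopy invariance for motivic Borel-Moore homology of vector bundle stacks is itself natural in base change, so identifying the right column with the shift by $(2d,d)$ on $\HBM_*(X/S,-)$ and $\HBM_*(X'/S,-)$ gives a commutative square for $\varphi_{X/Y}$ and $\varphi_{X'/Y'}$.

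Finally I would chase the unit class. The unit $1 \in \HBM_0(Y/Y,\cF)$ is the class represented by the identity $\cF \to \cF$, and by construction of $q^*$ via the unit $\eta_*$ of $q^* \dashv q_*$, it is sent to $1 \in \HBM_0(Y'/Y',\cF)$. Applying $q^*$ to $[X/Y] = \varphi_{X/Y}(1)$ and invoking the commutativity above yields $q^*[X/Y] = \varphi_{X'/Y'}(1) = [X'/Y']$. The main obstacle will be step two, verifying cleanly that the specialization construction of \cite{Khan_Virtual_fundamental_classes} transported to the rigid analytic setting is compatible with base change on $Y$; once the analytic deformation space $D_{X/Y}$ is shown to satisfy the cartesian property above, everything else follows from the standard six functor compatibilities already recalled in \cref{sec:motives,sec:motivic_homology}.
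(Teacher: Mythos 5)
The paper itself does not prove \cref{prop:vfc_base_change}: it states it (together with \cref{prop:vfc_functoriality}) immediately after the sentence ``We refer to \cite[\S 3]{Khan_Virtual_fundamental_classes} for more details on virtual fundamental classes,'' so the cited work of Khan is treated as the source of the argument. Your proposal is essentially a faithful reconstruction of that argument: you first record the base-change compatibility of the Khan--Rydh deformation space and hence of $\bbT_{X/Y}[1]$ (using $\anL_{X'/Y'}\simeq g^*\anL_{X/Y}$); you then transport this to a commutative square relating $\rsp_{X/Y}$ and $\rsp_{X'/Y'}$ with the base-change map $q^*$, invoking proper base change and homotopy invariance; and you conclude by chasing the unit class $1\in\HBM_0(Y/Y,\cF)$ through the resulting square. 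This is the same route as Khan's proof, and nothing about the passage to the rigid analytic setting obstructs it, since the only inputs are the six-functor identities already assembled in \cref{sec:motives,sec:motivic_homology} and the base-change property of $D_{X/Y}$ from \cite{Khan_Virtual_Cartier_divisors}.

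One point you should make more precise if you wrote this up in full: the specialization map is not literally a single Gysin pullback along the fiber inclusion over $0$; it is built as a composite (a boundary/localization map for the open-closed decomposition of $D_{X/Y}$ over $\bA^1$, followed by identifications from homotopy invariance). The claim that $q^*$ commutes with it therefore needs to be checked piece by piece against that composite, not asserted wholesale. That said, each piece does commute with base change by the standard exchange transformations, so the gap is expository rather than substantive.
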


\begin{remark} \label{rem:Gysin}
Given a pullback square of derived \kanal stacks over $S$,
\[\begin{tikzcd}
X'\rar{w} \dar & Y'\dar\\
X\rar{v} & Y
\end{tikzcd}\]
where $v$ and $w$ are derived lci of virtual dimension $d$, it is convenient to denote
\[v^!\coloneqq w^!\colon\HBM_s(Y'/S,\cF(r))\longto\HBM_{s+2d}(X'/S,\cF(r+d))\quad\text{for every }r,s\in\bbZ.\]
This notation is justified by the base change property of deformation to the shifted tangent bundle (\cite[\S 1.4]{Khan_Virtual_fundamental_classes}).
When $X,Y,S$ are all underived, the functor $v^!$ is thus compatible with the classical refined Gysin homomorphism (see \cite[\S 6.2]{Fulton_Intersection_theory}, \cite[\S 7]{Behrend_Intrinsic_normal_cone}), in particular it is independent of the derived structures on $X'$ and $Y'$.
\end{remark}

\begin{definition} \label{def:Gysin_pushforward}
	Let $f \colon X \to Y$ be a morphism of derived \kanal stacks over $S$.
	Assume that $f$ is proper and derived lci of relative dimension $d$.
	We define the \emph{Gysin pushforward}
	\[ f_! \colon \rC^\bullet(X/S,\cF) \longrightarrow \rC^\bullet(Y/S,\cF(-d))[-2d] \]
	as the composition of
	\[ \rC^\bullet(X/S,\cF) \xrightarrow{\circ [X/Y]} \CBM_\bullet(f/S,\cF(-d))[-2d] \xrightarrow{f_*} \rC^\bullet(Y/S,\cF(-d))[-2d], \]
	where $f_*$ is the proper pushforward in \cref{rem:motivic_homology_functoriality}(\ref{rem:motivic_homology_functoriality:pushforward_homology_to_cohomology}).
	For $r,s\in\bbZ$, this induces the \emph{Gysin pushforward} for motivic cohomology
	\[ f_! \colon \rH^s(X/S,\cF(r)) \longrightarrow \rH^{s-2d}(Y/S,\cF(r-d)) . \]
\end{definition}

\bigskip
The proposition below collects the properties of various product operations on motivic (co)homology with respect to pullbacks and pushforwards.

\begin{proposition} \label{prop:properties_of_products}
	\begin{enumerate}[wide=0pt, itemsep=1ex]
		\item (Compatibility between pullback and cap product)
		\label{prop:properties_of_products:pullback_cap} Let $S$ be a derived \kanal stack and $f\colon X\to Y$ an lci morphism of derived \kanal $S$-stacks of relative dimension $d$.
		For any classes $\alpha\in\rH^s(Y,\cF(r))$, $\beta\in \HBM_{s'}(Y/S,\cF(r'))$, we have
		\[f^*(\alpha)\cap f^!(\beta) = f^!(\alpha\cap\beta)\]
		in $\HBM_{s'-s+2d}(X/S,\cF(r'-r+d))$.
		
		\item (Projection formula for cap product)
		\label{prop:properties_of_products:projection_formula_cap}
		Same context as above, for any classes $\alpha\in\rH^s(Y,\cF(r))$, $\beta\in \HBM_{s'}(Y/S,\cF(r'))$, we have
		\[f_*(f^*(\alpha)\cap\beta) = \alpha\cap f_*(\beta)\]
		in $\HBM_{s'-s}(X/S,\cF(r'-r))$.
		
		\item (Projection formula for cap product with Gysin)
		\label{prop:properties_of_products:projection_formula_cap_Gysin}
		Same context as above, for any classes $\alpha\in\rH^s(Y,\cF(r))$, $\beta\in \HBM_{s'}(Y/S,\cF(r'))$, we have
		\[f_*(\alpha\cap f^!(\beta)) = f_!(\alpha)\cap\beta\]
		in $\HBM_{s'-s+2d}(X/S,\cF(r'-r+d))$.

		\item (Comparison between external product and composition product with base change)
		\label{prop:properties_of_products:external_and_composition}
		Let
		\[ \begin{tikzcd}
			X \times_S Y \rar{p_1} \dar{p_2} \arrow{dr}{p} & Y \dar{b} \\
			X \rar{a} & S
		\end{tikzcd} \]
		be a pullback square of derived \kanal stacks.
		For any classes $\alpha\in\HBM_{s_1}(X/S,\cF(r_1))$, $\beta\in \HBM_{s_2}(Y/S,\cF(r_2))$, we have
		\[a^*(\beta)\circ\alpha = \alpha\boxtimes\beta = b^*(\alpha)\circ\beta\]
		in $\HBM_{s_1+s_2}(X\times_S Y/S,\cF(r_1+r_2))$.
		
		\item (Compatibility between external product and cap product)
		\label{prop:properties_of_products:compatibility_external_cap}
		Same context as above, for any classes $\alpha\in\rH^{s_1}(X,\cF(r_1))$, $\alpha'\in\HBM_{s'_1}(X/S,\cF(r'_1))$, $\beta\in \rH^{s_2}(Y,\cF(r_2))$, $\beta'\in \HBM_{s'_2}(Y/S,\cF(r'_2))$, we have
		\[(\alpha\boxtimes\beta)\cap(\alpha'\boxtimes\beta')=(\alpha\cap\alpha')\boxtimes(\beta\cap\beta')\]
		in $\HBM_{s'_1-s_1+s'_2-s_2}(X\times_S Y/S,\cF(r'_1-r_1+r'_2-r_2))$.
		
		\item (Compatibility between external product and pushforward)
		\label{prop:properties_of_product:external_product_pushforward}
		Let
		\[ \begin{tikzcd}
			X'\times_S Y' \arrow{dr}{h} \arrow{rr} \arrow{dd} & & Y' \dar{g} \\
			& X \times_S Y \rar{p_1} \dar{p_2} \arrow{dr}{p} & Y \dar{b} \\
			X' \rar{f} & X \rar{a} & S
		\end{tikzcd} \]
		be a commutative diagram of derived \kanal stacks where the two squares are cartesian, and the morphisms $f$, $g$, $h$ are proper.
		For any classes $\alpha\in\HBM_{s_1}(X'/S,\cF(r_1))$, $\beta\in \HBM_{s_2}(Y'/S,\cF(r_2))$, we have
		\[ h_*(\alpha\boxtimes\beta) = f_*(\alpha)\boxtimes g_*(\beta) \]
		in $\HBM_{s_1+s_2}(X\times_S Y/S,\cF(r_1+r_2))$.
	\end{enumerate}
\end{proposition}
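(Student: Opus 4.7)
The plan is to deduce each of the six identities by passage to motivic (co)homology from the corresponding chain-level commutative diagram already established in Propositions \ref{prop:pullback_cap}--\ref{prop:external_product_pushforward}. Concretely, each of those chain-level statements is an equality of morphisms in $\RigSHet(S,\bbQ)$; tensoring by the appropriate Tate twists $1_S(r)[s]$ and applying the lax-monoidal functor $\pi_0\Map_{\RigSHet(S,\bbQ)}(1_S,-)$ translates those diagrams into the claimed identities between (co)homology classes.

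For parts (1), (2), and (3), the first step is to rewrite the Gysin operations in terms of the virtual fundamental class. By \cref{def:Gysin_pullback}, $f^!(\beta) = [X/Y]\circ\beta$ via the right composition product, and by \cref{def:Gysin_pushforward}, $f_!(\alpha) = f_*([X/Y]\circ\alpha)$. After these substitutions: part (1) becomes a direct application of \cref{prop:pullback_cap} with the cochain $\alpha$, the bivariant chain $[X/Y]$ (occupying the $\CBM_\bullet(f/S,\cF)$ slot), and the chain $\beta$; part (2) is the passage-to-homology of \cref{prop:projection_formula_cap}; and part (3) follows from \cref{prop:projection_formula_Gysin} applied to the triple $(\alpha, [X/Y], \beta)$, combined with the functoriality of $f_*$.

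Parts (4), (5), and (6) follow respectively from the top diagram of \cref{prop:external_product_pullback}, its bottom diagram, and \cref{prop:external_product_pushforward}. For (4), the two side paths of the top diagram compute $p_2^*(\alpha)\circ\beta$ and $\alpha\circ p_1^*(\beta)$ at the chain level; under the base-change identifications of \cref{rem:motivic_homology_functoriality}(\ref{rem:motivic_homology_functoriality:base_change}), these correspond respectively to $b^*(\alpha)\circ\beta$ and $a^*(\beta)\circ\alpha$, both of which therefore equal $\alpha\boxtimes\beta$. For (5) and (6), passage to homology from the corresponding chain-level diagrams is immediate after applying $\pi_0\Map_{\RigSHet(S,\bbQ)}(1_S(r)[s],-)$ with the appropriate total twist.

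The main obstacle is purely notational: tracking the Tate twists and degree shifts that propagate through the Gysin maps, and matching the various base change identifications ($p_i^*$ versus $a^*$ and $b^*$) with the correct slots of the composition and cap products. No new geometric input beyond the chain-level propositions is needed; the proof is a systematic application of $\pi_0\Map_{\RigSHet(S,\bbQ)}(1_S(r)[s],-)$ to known commutative diagrams, supplemented by the tautological identity $f^!=[X/Y]\circ(-)$ from \cref{def:Gysin_pullback}.
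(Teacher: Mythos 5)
Your proposal is correct and matches the paper's argument, which is simply the one-line observation that each of the six identities follows from the corresponding chain-level diagram of Propositions \ref{prop:pullback_cap}--\ref{prop:external_product_pushforward} (with the two diagrams of \cref{prop:external_product_pullback} supplying parts (4) and (5)). The extra details you supply — specializing the bivariant slot to the virtual fundamental class $[X/Y]$ to unwind the Gysin operations, and applying $\pi_0\Map_{\RigSHet(S,\bbQ)}(1_S(r)[s],-)$ to pass to homology — are exactly the routine unpacking that the paper leaves implicit.
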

\begin{proof}
	The statements follow from Propositions \ref{prop:pullback_cap}--\ref{prop:external_product_pushforward} respectively.
\end{proof}

Next we state two base change formulas, the first relates star pushforward with Gysin pullback for motivic Borel-Moore homology, and the second relates Gysin pushforward with star pullback for motivic cohomology.

\begin{proposition} \label{prop:Gysin_pullback_base_change}
	Let $S$ be a derived \kanal stack, and
	\[\begin{tikzcd}
		X' \rar{g} \dar{p} & Y' \dar{q} \\
		X \rar{f} & Y
	\end{tikzcd}\]
	a pullback square of derived \kanal stacks, where $f$ is derived lci of relative dimension $d$.
	For any class $\alpha\in\HBM_s(Y'/S,\cF(r))$, we have
	\[f^!q_*(\alpha) = p_*g^!(\alpha)\]
	in $\HBM_{s+2d}(X/S,\cF(r+d))$.
\end{proposition}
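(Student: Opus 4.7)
The plan is to reduce the identity to the base change formula for virtual fundamental classes (Proposition \ref{prop:vfc_base_change}) combined with a chain-level projection formula for the right composition product. By Definition \ref{def:Gysin_pullback}, the two sides expand as
\[ f^! q_*(\alpha) = [X/Y] \circ q_*(\alpha), \qquad p_* g^!(\alpha) = p_*\bigl([X'/Y'] \circ \alpha\bigr). \]
Proposition \ref{prop:vfc_base_change} identifies $[X'/Y'] = q^*[X/Y]$ in $\HBM_{2d}(X'/Y',\cF(d))$, so the desired equality reduces to
\[ [X/Y] \circ q_*(\alpha) = p_*\bigl(q^*[X/Y] \circ \alpha\bigr), \]
which is the instance of a projection formula for right composition products across the cartesian square (note that properness of $q$, needed for $q_*$ on Borel-Moore homology to be defined, implies properness of $p$).

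I would establish this projection formula at the chain level by showing that, for any $\gamma \in \CBM_\bullet(X/Y,\cF(d))[2d]$, the two composite maps
\[ p_* \circ \bigl( q^*(\gamma) \circ (-) \bigr), \quad \gamma \circ q_*(-) \colon \CBM_\bullet(Y'/S,\cF) \longrightarrow \CBM_\bullet(X/S,\cF(d))[2d] \]
agree. Unwinding the chain-level definitions of $q_*$, $q^*$, $p_*$ from Remark \ref{rem:motivic_homology_functoriality} and of the right composition product from \eqref{eq:right_composition}, this boils down to a long but routine diagram chase inside the six-functor formalism, invoking the base change isomorphisms $q^* f_* \simeq g_* p^*$ and $g^! q^* \simeq p^* f^!$, the projection formula $f_!(A \otimes f^* B) \simeq f_! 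A \otimes B$, the naturality of the exchange transformation $\mathrm{Ex}^{!*}_\otimes$, and the naturality of the adjunction units and counits. The argument is parallel in spirit and length to the proof of Proposition \ref{prop:pullback_cap}, and may alternatively be imported directly from its algebraic counterpart in \cite[\S 2.3]{Khan_Virtual_fundamental_classes}.

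The main difficulty is purely organisational: several six-functor exchange transformations interact and the chase must be laid out carefully. However, no conceptually new ingredient is needed beyond Proposition \ref{prop:vfc_base_change} and the standard compatibilities in \cite{Ayoub_The_six}; each individual step is forced by the universal properties of the adjunctions involved.
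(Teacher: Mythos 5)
Your proposal matches the paper's proof exactly: expand both Gysin pullbacks via \cref{def:Gysin_pullback}, invoke \cref{prop:vfc_base_change} to identify $[X'/Y'] = q^*[X/Y]$, and reduce to the projection formula for the right composition product, which the paper — as you anticipate in your alternative route — simply cites from \cite[\S 2.3.4]{Khan_Virtual_fundamental_classes} rather than re-deriving by diagram chase.
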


\begin{proof}
	We have
	\begin{align*}
		f^! q_*(\alpha) & = [X/Y] \circ q_*(\alpha) && \text{by \cref{def:Gysin_pullback}} \\
		& = p_*( q^*[X/Y] \circ \alpha ) && \text{by the projection formula (\cite[\S 2.3.4]{Khan_Virtual_fundamental_classes})} \\
		& = p_* ([X'/Y'] \circ \alpha)  && \text{by \cref{prop:vfc_base_change}} \\
		& = p_* g^!(\alpha) && \text{by  \cref{def:Gysin_pullback}.}
	\end{align*}
\end{proof}

\begin{proposition} \label{prop:Gysin_pushforward_base_change}
	Let $S$ be a derived \kanal stack, and
	\[\begin{tikzcd}
		X' \rar{g} \dar{p} & Y' \dar{q} \\
		X \rar{f} & Y
	\end{tikzcd}\]
	a pullback square of derived \kanal stacks, where $f$ is proper and derived lci of relative dimension $d$.
	For any class $\alpha\in\rH^s(X/S,\cF(r))$, we have
	\[q^*f_!(\alpha)=g_!p^*(\alpha)\]
	in $\rH^{s-2d}(Y'/S,\cF(r-d))$.
\end{proposition}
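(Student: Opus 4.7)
The plan is to parallel the proof of \cref{prop:Gysin_pullback_base_change}, unfolding the Gysin pushforward via \cref{def:Gysin_pushforward} and then exchanging $q^*$ with $f_*$ using proper base change, compatibility between pullback and the left composition product, and the base change property of virtual fundamental classes. Since $f$ (and hence its base change $g$) is proper, the six-functor formalism gives a chain-level identification $q^* \circ f_* \simeq g_* \circ p^*$ on motivic bivariant chains, which is the input needed for the main step.

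Concretely, the computation I would carry out reads
\begin{align*}
q^* f_!(\alpha) &= q^* f_*\bigl(\alpha \circ [X/Y]\bigr) && \text{by \cref{def:Gysin_pushforward},} \\
&= g_* p^*\bigl(\alpha \circ [X/Y]\bigr) && \text{by proper base change,} \\
&= g_*\bigl(p^*(\alpha) \circ [X'/Y']\bigr) && \text{by compatibility of $p^*$ with $\circ$ and \cref{prop:vfc_base_change},} \\
&= g_! p^*(\alpha) && \text{by \cref{def:Gysin_pushforward}.}
\end{align*}
Here the second equality uses proper base change in the form $q^* f_* \simeq g_* p^*$, which follows from the equivalence $q^* f_! \simeq g_! p^*$ of the six-functor formalism together with the fact that $f_! \simeq f_*$ and $g_! \simeq g_*$ when $f$ and $g$ are proper. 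The third equality combines two statements: first, that the chain-level pullback $p^*$ is compatible with the left composition product, in the sense that $p^*(\alpha \circ \gamma) = p^*(\alpha) \circ p^*(\gamma)$ when both sides are defined, and second, that $p^*[X/Y] = [X'/Y']$ as established in \cref{prop:vfc_base_change} (using the identification of refined Gysin classes under base change, see \cref{rem:Gysin}).

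The main obstacle is the chain-level compatibility of pullback with the left composition product used in step three. This is a diagram-chase statement analogous to the compatibilities in Propositions~\ref{prop:pullback_cap}--\ref{prop:external_product_pushforward}: it reduces, after unwinding $p^*$ as the unit of $p^* \dashv p_*$ and expanding the definition of the left composition product, to the naturality of the projection formula and the interplay between the units and counits of the adjunctions $p^* \dashv p_*$, $f_! \dashv f^!$, and $g_! \dashv g^!$, eventually bottoming out at the associativity of the $\mathbb E_\infty$-structure on $\cF$. The verification is routine but tedious, and can be carried out in the same style as the proof of \cref{prop:pullback_cap}. Alternatively, one can avoid doing this by hand by invoking a projection formula for the left composition product analogous to the one for the right composition product used in \cite[\S 2.3.4]{Khan_Virtual_fundamental_classes}.
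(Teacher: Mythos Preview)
Your proof is correct and follows essentially the same approach as the paper's own proof: unfold the Gysin pushforward, apply proper base change ($q^* f_* \simeq g_* p^*$), distribute pullback over the composition product, invoke base change for virtual fundamental classes, and fold back. The paper cites \cite[\S 2.3.1--2.3.2]{Khan_Virtual_fundamental_classes} for the two middle steps rather than sketching a diagram chase, and writes the base-changed class as $q^*[X/Y]$ (indexed by the map on the target, per \cref{prop:vfc_base_change}) where you write $p^*[X/Y]$, but the content is identical.
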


\begin{proof}
	We have
	\begin{align*}
		q^*f_!(\alpha) & = q^*(f_*(\alpha \circ [X/Y])) && \text{by \cref{def:Gysin_pushforward}} \\
		& = g_* p^*(\alpha \circ [X/Y]) && \text{by the base change formula (\cite[\S 2.3.2]{Khan_Virtual_fundamental_classes})} \\
		& = g_* (p^*(\alpha) \circ q^*[X/Y]) && \text{by \cite[\S 2.3.1]{Khan_Virtual_fundamental_classes}} \\
		& = g_*( p^*(\alpha) \circ [X'/Y'] ) && \text{by \cref{prop:vfc_base_change}} \\
		& = g_! p^*(\alpha)  && \text{by \cref{def:Gysin_pushforward}.}
	\end{align*}
\end{proof}

\section{Non-archimedean Gromov-Witten virtual fundamental classes}

In this section, we construct the virtual fundamental classes for non-archimedean Gromov-Witten theory, using the general machinery in the previous sections.
We first give a review of derived moduli stacks of non-archimedean stable maps following \cite{Porta_Yu_Non-archimedean_quantum_K-invariants}.

Fix $S$ a rigid \kanal space and $X$ a rigid \kanal space smooth over $S$.
For any derived \kanal space $T$ over $S$, an $n$-pointed genus $g$ \emph{stable map} into $X/S$ over $T$ consists of a proper flat morphism $p\colon C\to T$, $n$ distinct sections $s_i\colon T\to C$ and an $S$-map $f\colon C\to X$, such that every geometric fiber $[C_t,(s_i(t)),f_t\colon C_t\to X]$ is a stable map, in the sense that its automorphism group is a finite \kanal group\footnote{It is a finite constant group when the ground field $k$ has characteristic zero.}.

Let $\RoM_{g,n}(X/S)$ denote the derived moduli stack of $n$-pointed genus $g$ stable maps into $X/S$.
Using the representability theorem in derived non-archimedean geometry (see \cite{Porta_Yu_Representability_theorem}) and an explicit computation of the analytic cotangent complex, we have the following.

\begin{theorem}[{\cite[\S 4.2]{Porta_Yu_Non-archimedean_quantum_K-invariants}}] \label{thm:derived_moduli_stack}
	The derived stack $\RoM_{g,n}(X/S)$ is an lci derived \kanal stack locally of finite presentation over $S$.
\end{theorem}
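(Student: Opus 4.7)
The plan is to apply the representability theorem of \cite{Porta_Yu_Representability_theorem} to endow the moduli stack of stable maps with a derived structure, and then to compute its analytic cotangent complex explicitly in order to verify the lci property. The overall strategy parallels the construction in derived algebraic geometry, with \kanal inputs supplied by our earlier works.

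First, I would introduce the Artin \kanal stack $\ofM_{g,n}^\pre$ of $n$-pointed genus $g$ pre-stable curves (smooth and locally of finite presentation), together with its universal curve $p^\pre\colon\oC^\pre\to\ofM_{g,n}^\pre$. The functor
\[ T\longmapsto\RoM_{g,n}(X/S)(T) \]
is then realized as the open derived substack of the relative derived mapping stack $\bfMap_S(\oC^\pre,X)\to\ofM_{g,n}^\pre$ cut out by the stability condition. Representability as a derived \kanal stack locally of finite presentation over $S$ follows by verifying the hypotheses of \cite{Porta_Yu_Representability_theorem}: étale descent, infinitesimal cohesiveness, and nilcompleteness are all inherited from the derived Hom space formalism of \cite{Porta_Yu_Derived_Hom_spaces}, using that $p^\pre$ is proper and flat; local finite presentation of the truncation $\trunc\RoM_{g,n}(X/S)\simeq\oM_{g,n}(X/S)$ is the classical non-archimedean result underlying Yu's Gromov compactness framework; and the existence of an analytic cotangent complex is supplied by the derived Hom space formula.

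The analytic cotangent complex can be computed explicitly. At a $T$-point $\xi=(p\colon\oC\to T,(s_i),f\colon\oC\to X)$, the relative analytic cotangent complex over $\ofM_{g,n}^\pre$ is
\[ \xi^*\anL_{\RoM_{g,n}(X/S)/\ofM_{g,n}^\pre}\simeq\big(\rR p_*\,f^*\bbT_{X/S}\an\big)^\vee. \]
Since $X/S$ is smooth, $\bbT_{X/S}\an$ is a vector bundle concentrated in degree zero; since $p$ is proper flat of relative dimension one, $\rR p_*$ of a vector bundle is perfect with cohomology in degrees $[0,1]$. Dualizing, the relative cotangent complex is perfect with cohomology in $[-1,0]$. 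Combining this with the smoothness of $\ofM_{g,n}^\pre$ via the fiber sequence
\[ \xi^*\anL_{\ofM_{g,n}^\pre/S}\longrightarrow\xi^*\anL_{\RoM_{g,n}(X/S)/S}\longrightarrow\xi^*\anL_{\RoM_{g,n}(X/S)/\ofM_{g,n}^\pre} \]
yields that $\anL_{\RoM_{g,n}(X/S)/S}$ is perfect of tor-amplitude $(-\infty,1]$, establishing the lci property and yielding the expected virtual dimension via Riemann-Roch on the universal curve.

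The principal technical obstacle I anticipate lies in rigorously constructing the derived mapping stack $\bfMap_S(\oC^\pre,X)$ over the stacky base $\ofM_{g,n}^\pre$ in the \kanal setting, and extending the representability hypotheses from an affinoid base to this Artin stack base. Once this is in place, the cotangent complex computation and the verification of the lci condition are essentially formal consequences of the derived Hom space formula together with the smoothness of $X/S$ and the one-dimensionality of the fibers of $p$.
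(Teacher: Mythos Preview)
Your proposal is correct and matches the approach the paper indicates: the paper does not supply an independent proof here but simply cites \cite[\S 4.2]{Porta_Yu_Non-archimedean_quantum_K-invariants}, prefacing the statement with the remark that it follows from the representability theorem of \cite{Porta_Yu_Representability_theorem} together with an explicit computation of the analytic cotangent complex. Your sketch is precisely an expansion of that sentence, and the cotangent complex formula you wrote down is the one used in the cited reference.
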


Apply \cref{def:virtual_fundamental_class} to the lci derived \kanal stack $\RoM_{g,n}(X/S)$ and the coefficient $\cF=\bbQ_S$, we obtain the associated virtual fundamental class \[[\RoM_{g,n}(X/S)]\in\HBM_*\big(\RoM_{g,n}(X/S)/S,\bbQ_S(*)\big)\]
in the motivic Borel-Moore homology of the moduli stack.

In order to establish all the expected properties of non-archimedean Gromov-Witten invariants, instead of working with $n$-pointed genus $g$ stable maps, we need use a slight combinatorial refinement called $(\tau,\beta)$-marked stable maps associated to an A-graph $(\tau,\beta)$, introduced by Behrend-Manin \cite{Behrend_Stacks_of_stable_maps}.

Let us recall the basic definitions.
A \emph{modular graph} consists of the following data:
\begin{enumerate}
	\item A finite graph $\tau$.
	\item A subset $T_\tau$ of 1-valent vertices of $\tau$ called \emph{tail vertices} and an ordering on $T_\tau$.
	We require that each edge of $\tau$ contains at most one tail vertex.
	We denote by $V_\tau$ the set of non-tail vertices.
	For every $v\in V_\tau$, we denote by $E_v$ the set of edges connected to $v$, and by $\val(v)$ the cardinality of $E_v$.
	\item For each $v\in V_\tau$, a natural number $g(v)$ called \emph{genus}.
\end{enumerate}
For the purpose of this paper, we will restrict to stable modular graphs, i.e. $2g(v) + \val(v) \ge 3$ for every vertex v.
An A-graph $(\tau,\beta)$ consists of a modular graph $\tau$ and a map $\beta\colon V_\tau\to A(X)$, the group of one-dimensional cycles in $X$ modulo analytic equivalence.

Given a modular graph $\tau$ and a derived \kanal space $T$, a $\tau$-marked prestable curve over $T$ consists for every $v\in V_\tau$, a $\val(v)$-pointed genus $g(v)$ prestable curve $[C_v,(s_{v,e})_{e\in E_v}]$ over $T$.
For each edge $e$ of $\tau$ not containing a tail vertex, we can glue the two corresponding sections $s_{v,e}$.
In this way, we obtain a \emph{glued curve} from a $\tau$-marked prestable curve.

Given an A-graph $(\tau,\beta)$ and a derived \kanal space $T$, a $(\tau,\beta)$-marked prestable map into $X/S$ over $T$ consists of a $\tau$-marked prestable curve $[C_v,(s_{v,e})_{e\in E_v}]_{v\in V_\tau}$ over $T$ and an $S$-map from the associated glued curve $[C,(s_i)_{i\in T_\tau},(s_e)_{e\in E_\tau}]$ to $X$, such that for every geometric point $t\in T$ and every $v\in V_\tau$, the composite map $C_{v,t}\to C_t\xrightarrow{f_t} X$ has class $\beta(v)$.
It is called \emph{stable} if every geometric fiber $[C_t,(s_i(t))_{i\in T_\tau},f_t]$ is a stable map.

\begin{theorem}[{\cite[\S 4-5]{Porta_Yu_Non-archimedean_quantum_K-invariants}}] \label{thm:geometric_relations}
	 The derived moduli stack $\RoM(X/S,\tau,\beta)$ of $(\tau,\beta)$-marked stable maps into $X/S$ is again an lci derived \kanal stack locally of finite presentation over $S$.
	 Furthermore, it satisfies the following geometric relations with respect to elementary operations on A-graphs:
	\begin{enumerate}[wide=0pt, itemsep=1ex]
	\item \label{thm:geometric_relations:products} Products: Let $(\tau_1, \beta_1)$ and $(\tau_2, \beta_2)$ be two A-graphs.
	We have a canonical equivalence
	\[ \RoM(X/S, \tau_1 \sqcup \tau_2, \beta_1 \sqcup \beta_2) \xrightarrow{\ \sim\ } \RoM(X/S, \tau_1, \beta_1) \times_S  \RoM(X/S, \tau_2, \beta_2). \]
	
	\item \label{thm:geometric_relations:cutting_edges} Cutting edges: Let $(\sigma,\beta)$ be an A-graph obtained from $(\tau,\beta)$ by cutting an edge $e$ of $\tau$.
	Let $v,w$ be the two tails of $\sigma$ created by the cut.
	We have a derived pullback diagram
	\[ \begin{tikzcd}
		\RoM(X/S, \tau, \beta) \rar{c} \dar{\ev_e} & \RoM(X/S, \sigma, \beta) \dar{\ev_v \times \ev_w} \\
		X \rar{\Delta} & X \times_S X.
	\end{tikzcd} \]
	
	\item \label{thm:geometric_relations:universal_curve} Universal curve: Let $(\sigma,\beta)$ be an A-graph obtained from $(\tau,\beta)$ by forgetting a tail attached to a vertex $w$.
	Let $\oCpre_w\to\oMpre_\sigma$ be the universal curve corresponding to $w$.
	We have a derived pullback diagram
	\[ \begin{tikzcd}
		\RoM( X/S, \tau, \beta ) \rar \dar & \RoM( X/S, \sigma, \beta ) \dar \\
		\oCpre_w \rar & \oMpre_\sigma.
	\end{tikzcd} \]
	
	\item \label{thm:geometric_relations:forgetting_tails} Forgetting tails:
	Same context as above, we have a derived pullback diagram
	\[ \begin{tikzcd}
		\RoM(X/S, \tau, \beta) \rar \dar & \oM_\tau \times_{\oM_\sigma}  \RoM(X/S, \sigma, \beta) \dar \\
		\oCpre_w \rar & \oM_\tau \times_{\oM_\sigma} \oMpre_\sigma.
	\end{tikzcd} \]
	
	\item \label{thm:geometric_relations:contracting_edges} Contracting edges:
	Let $(\sigma,\beta)$ be an A-graph where $\sigma$ is obtained from a modular graph $\tau$ by contracting an edge (possibly a loop) $e$.
	For each integer $l\ge 1$, we replace the edge $e$ in $\tau$ by a chain $c_l$ of $l$ edges where the new $(l-1)$ vertices have genus 0, and obtain a new modular graph $\tau_l$.
	Let $\beta^j_l, j\in J_l$ be all possible curve classes on $\tau_l$ that project to $\beta$ under the contraction of $c_l$.
	We have a canonical equivalence
	\[\colim_l \coprod_{j} \RoM(X/S,\tau_l,\beta^j_l)\xrightarrow{\ \sim\ }\oM_\tau\times_{\oM_\sigma}\RoM(X/S,\sigma,\beta).\]
\end{enumerate}
\end{theorem}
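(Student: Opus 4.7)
The plan is to deduce all claims from the functor-of-points description of $\RoM(X/S,\tau,\beta)$, combined with the representability theorem in derived non-archimedean geometry from \cite{Porta_Yu_Representability_theorem}, parallel to the treatment of $\RoM_{g,n}(X/S)$ in \cite{Porta_Yu_Non-archimedean_quantum_K-invariants}.

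For the representability and lci statements, I would apply the derived non-archimedean representability theorem to the functor of $(\tau,\beta)$-marked stable maps. The deformation-theoretic hypotheses are verified as in \cref{thm:derived_moduli_stack}, and the analytic cotangent complex decomposes vertex by vertex into contributions of the form $\rR p_{v*} f_v^* \bbT\an_{X/S}$, twisted by the deformation theory of the marked prestable curve at each vertex. Perfectness and the tor-amplitude bound $(-\infty,1]$ then follow from the corresponding bounds for $\RoM_{g,n}(X/S)$ applied vertex by vertex, since fixing the combinatorial type $\tau$ only cuts out a clopen stratum of the unrestricted moduli.

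For parts (1)--(4), the arguments reduce to direct manipulations of the functor of points. In (1), since $\tau_1\sqcup\tau_2$ has no edges between the two subgraphs, a $(\tau_1\sqcup\tau_2)$-marked prestable curve is a disjoint pair and stability is vertex-local, yielding the product decomposition. In (2), cutting an edge $e$ to produce two new tails $v,w$ of $\sigma$, a $(\tau,\beta)$-marked stable map is equivalent to a $(\sigma,\beta)$-marked stable map satisfying $f\circ s_v = f\circ s_w$ in $X$, which is precisely the condition carved out by pullback along the diagonal $\Delta\colon X\to X\times_S X$. In (3), forgetting a tail $t$ at vertex $w$ yields $\sigma$; a $\tau$-marked prestable curve is the same as a $\sigma$-marked prestable curve equipped with a chosen section of the $w$-component, i.e.\ a $T$-point of $\oCpre_w\to\oMpre_\sigma$. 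Part (4) is the stable-curve analogue of (3), obtained by base changing along the stabilization map and using that over the locus where $\tau$-stability survives after adding the tail, one recovers exactly $\RoM(X/S,\tau,\beta)$.

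The main obstacle is (5). The subtlety is that smoothing the node of $\sigma$ coming from the contracted edge $e$ can, in the $\tau$-direction, open up into a chain of rational bubbles of arbitrary length $l$, each carrying a piece of the curve class. My plan is in three steps. First, for each $l$ and $j\in J_l$, construct a canonical map
\[\RoM(X/S,\tau_l,\beta_l^j)\longto\oM_\tau\times_{\oM_\sigma}\RoM(X/S,\sigma,\beta)\]
by sending a $\tau_l$-marked stable map to: the stabilization of its underlying $\tau_l$-marked curve (which, after contracting the chain of rational bubbles to a single node, yields a $\tau$-marked stable curve in $\oM_\tau$, and after further contracting $e$, a $\sigma$-marked stable curve), together with the induced $(\sigma,\beta)$-marked stable map. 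Second, assemble these into a map out of the colimit and verify essential surjectivity and fully faithfulness on $T$-points for every derived affinoid test object $T$: given a $T$-point on the right, the $\tau$-marking determines a unique chain length $l$ and a unique admissible class distribution $\beta_l^j$, by minimality of semistable reduction at the smoothed node. Third, match the derived structure by comparing analytic cotangent complexes: on each component of the colimit, the cotangent complex fits into a triangle combining $\anL_{\RoM(X/S,\sigma,\beta)/S}$ with $\anL_{\oM_\tau/\oM_\sigma}$, precisely matching the cotangent complex of the fiber product on the right. The key technical input is a derived Knudsen-type statement that $\colim_l \oMpre_{\tau_l}$ stratifies $\oM_\tau\times_{\oM_\sigma}\oMpre_\sigma$ by length of the intermediate rational chain; once this is in hand, the equivalence descends from the level of underlying curves to the level of stable maps into $X$ by pulling back along the evaluation sections.
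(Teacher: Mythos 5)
The statement you are asked to prove is, in the paper, imported wholesale from the companion paper \cite[\S4--5]{Porta_Yu_Non-archimedean_quantum_K-invariants}; this paper itself gives no proof. Your high-level strategy — representability theorem plus functor-of-points identifications — is indeed the strategy of that companion work, and your treatments of (1)--(4) reduce to the same underlying observations. Two things, however, need flagging.

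First, a minor but real imprecision: you justify the tor-amplitude bound for $\anL$ by saying that ``fixing the combinatorial type $\tau$ only cuts out a clopen stratum of the unrestricted moduli.'' This is not how $\RoM(X/S,\tau,\beta)$ sits relative to $\RoM_{g,n}(X/S)$. A $(\tau,\beta)$-marked stable map remembers the individual vertex curves $C_v$ separately together with gluing data; the glue map $\RoM(X/S,\tau,\beta)\to\RoM_{g,n}(X/S)$ is finite onto a \emph{locally closed} (not clopen) substack, and is generally ramified along the boundary. The moduli $\RoM(X/S,\tau,\beta)$ is instead built as a derived fiber product of $\prod_{v\in V_\tau}\RoM_{g(v),\val(v)}(X/S)$ over diagonal conditions indexed by the edges; the cotangent-complex computation should be run through this fiber-product description, not through a substack argument, and this is exactly what (2) of the theorem encodes once one unravels it.

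Second, and more seriously, your step two for part (5) contains a gap. You claim that ``given a $T$-point on the right, the $\tau$-marking determines a unique chain length $l$ and a unique admissible class distribution $\beta_l^j$, by minimality of semistable reduction at the smoothed node.'' This fails over a non-reduced or positive-dimensional test object $T$: the number of rational bubbles interpolating between the two branches of the contracted node can jump in families (a bubble can smooth onto an adjacent component, transferring its degree, or split further), so there is no well-defined $l$ attached to a $T$-point of $\oM_\tau\times_{\oM_\sigma}\RoM(X/S,\sigma,\beta)$. The colimit appearing in (5) is exactly the mechanism that handles this jumping; the transition maps in the colimit diagram are not open or closed inclusions, and one cannot bypass them by choosing a single $l$ per $T$. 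A correct essential-surjectivity argument must proceed étale-locally on $T$ and in a way that respects the colimit structure, and the verification at the level of cotangent complexes likewise needs to be carried out compatibly with the colimit rather than stratum by stratum. Your formulation of the ``derived Knudsen-type statement'' as a stratification claim conflates a filtration by closed substacks with the presentation of the fiber product as a colimit, and as stated it does not close this gap.

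Finally, your reductions of (3) and (4) are correct but compressed; it is worth making explicit that (4) follows from (3) by the purely formal identity
\[\oCpre_w\times_{\oM_\tau\times_{\oM_\sigma}\oMpre_\sigma}\bigl(\oM_\tau\times_{\oM_\sigma}\RoM(X/S,\sigma,\beta)\bigr)\simeq\oCpre_w\times_{\oMpre_\sigma}\RoM(X/S,\sigma,\beta),\]
so that no new geometric input is required beyond commutativity of the relevant square.
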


Apply \cref{def:virtual_fundamental_class} to the lci derived \kanal stack $\RoM(X/S,\tau,\beta)$ and the coefficient $\cF=\bbQ_S$, we obtain the associated virtual fundamental class \[[\RoM(X/S,\tau,\beta)]\in\HBM_d\big(\RoM(X/S,\tau,\beta)/S,\bbQ_S(2d)\big)\]
for $(\tau,\beta)$-marked stable maps as well, where $d$ denotes the virtual dimension.

\section{Proofs of Behrend-Manin axioms} \label{sec:axioms}

In this section, we prove a list of natural geometric properties for the virtual fundamental class $[\RoM(X/S,\tau,\beta)]$ of the moduli stack of $(\tau,\beta)$-marked stable maps with respect to elementary operations on graphs, namely, products, cutting edges, forgetting tails and contracting edges.
They are direct consequences of the corresponding geometric relations between the derived moduli stacks in \cref{thm:geometric_relations}.
Furthermore, we compute the virtual fundamental class in the case of mapping to a point via a deformation to the normal bundle.
All the above properties together constitute exactly the list of axioms for Gromov-Witten virtual fundamental classes proposed by Behrend and Manin in \cite[Definition 7.1]{Behrend_Stacks_of_stable_maps}.

\subsection{Mapping to a point} \label{sec:mapping_to_a_point}

\begin{theorem} \label{thm:mapping_to_a_point_vfc}
	Let $(\tau,0)$ be any A-graph where $\beta$ is 0.
	We have $\oM(X/S,\tau,0)\simeq\oM_\tau\times X$, and
	\[[\RoM(X/S,\tau,0)]=c_{g(\tau)\dim X/S}\big(\rR^1\pi_*\cO_{\oC_\tau}\boxtimes\bbT\an_{X/S}\big)\cap[\oM(X/S,\tau,0)],\]
	where $\pi\colon\oC_\tau\to\oM_\tau$ and $c_\cdot(\cdot)$ denotes the Chern class.
\end{theorem}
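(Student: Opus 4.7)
The plan is to proceed in three steps: identify the classical truncation as the smooth stack $\oM_\tau \times X$, compute the obstruction bundle via derived deformation theory, and then invoke the general principle that for a quasi-smooth derived stack with smooth truncation the virtual fundamental class is the Euler class of the obstruction bundle capped with the ordinary fundamental class. Since $\beta = 0$, every $(\tau, 0)$-marked stable map $f \colon C \to X$ is constant on each connected component of $C$, hence constant on each connected component of the glued curve corresponding to $\tau$; conversely, any such datum defines a stable map. This yields $\oM(X/S, \tau, 0) \simeq \oM_\tau \times X$ at the level of underlying classical stacks.

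For the second step, I would apply the cotangent complex computation for derived Hom stacks of \cite{Porta_Yu_Derived_Hom_spaces,Porta_Yu_Non-archimedean_quantum_K-invariants}: at a point $([C], x) \in \oM_\tau \times X$ the relative tangent complex of $\RoM(X/S, \tau, 0)/S$ decomposes as $T_{\oM_\tau, [C]} \oplus \rR\pi_*(f^\ast \bbT\an_{X/S})$ with $\pi \colon C \to \mathrm{pt}$. For the constant map $f = c_x$ one has $f^\ast \bbT\an_{X/S} \simeq \bbT\an_{X/S, x} \otimes \cO_C$, so
\[ \rR\pi_*(f^\ast \bbT\an_{X/S}) \simeq \bbT\an_{X/S, x} \oplus \bigl(\bbT\an_{X/S, x} \otimes \rH^1(C, \cO_C)\bigr)[-1]. \]
Globalizing, $\RoM(X/S, \tau, 0)$ is derived lci over $S$ of virtual dimension $d = \dim \oM_\tau + (1 - g(\tau)) \dim X/S$; its truncation $\oM_\tau \times X$ is smooth of dimension $\dim \oM_\tau + \dim X/S$, and the discrepancy is captured by the obstruction bundle $E = \rR^1\pi_*\cO_{\oC_\tau} \boxtimes \bbT\an_{X/S}$ of rank $g(\tau)\dim X/S$.

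For the third step, the closed immersion $j \colon \oM_\tau \times X \hookrightarrow \RoM(X/S, \tau, 0)$ of the truncation is a derived lci morphism whose relative cotangent complex is (a shift of) $E^\vee$; its virtual fundamental class, by the standard identification of the Euler class as the self-intersection of the zero section of a vector bundle, is $c_{g(\tau)\dim X/S}(E)$. Combining this with the functoriality of the Gysin pullback (\cref{prop:vfc_functoriality}) applied to the composition $\oM_\tau \times X \to \RoM(X/S, \tau, 0) \to S$, and using the identification $\HBM(\RoM/S) \simeq \HBM(\oM \times X / S)$ given by pushforward along $j$, one deduces the claimed formula. Alternatively, one can argue directly via deformation to the $1$-shifted tangent bundle (\cref{thm:deformation_to_tangent}): restricted along $j$, it recovers the classical deformation to the normal bundle of the zero section $\oM_\tau \times X \hookrightarrow E$, and specialization together with homotopy invariance delivers the Euler class. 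The main obstacle is the globalization in Step~2, namely verifying that the obstruction complex is an honest vector bundle on the whole of $\oM_\tau \times X$; this follows from cohomology and base change for the proper flat family $\pi \colon \oC_\tau \to \oM_\tau$ together with the vanishing of $\rH^{\ge 2}$ on curves, so that $\rR^1\pi_*\cO_{\oC_\tau}$ is the Hodge bundle of rank $g(\tau)$. With this in hand, Step~3 is formal inside the six-functor and virtual-class calculus established in \cref{sec:motivic_homology,sec:vfc}.
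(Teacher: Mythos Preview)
Your Steps~1 and~2 are fine and agree with the paper: the classical truncation is $X_0 \coloneqq \oM_\tau \times X$, and the obstruction bundle is $E = \rR^1\pi_\ast\cO_{\oC_\tau} \boxtimes \bbT\an_{X/S}$, locally free by cohomology and base change.

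The gap is in Step~3. Functoriality (\cref{prop:vfc_functoriality}) applied to $X_0 \xrightarrow{j} \RoM(X/S,\tau,0) \to S$ yields
\[ [X_0/\RoM] \circ [\RoM/S] = [X_0/S], \]
which expresses $[X_0/S]$ in terms of $[\RoM/S]$, not the reverse; there is no way to invert this relation. More fundamentally, your identification $[X_0/\RoM] = c_{\rank E}(E)$ is wrong already on degrees: since $\anL_j \simeq E^\vee[2]$ (one shift beyond the conormal of a genuine regular immersion), the virtual dimension of $j$ is $+\rank E$, so $j^!$ \emph{raises} Borel--Moore degree by $2\rank E$, whereas cap product with a top Chern class \emph{lowers} it. The self-intersection-of-zero-section picture applies to a zero section $s\colon X_0 \hookrightarrow V$, whose conormal is $E^\vee[1]$, not to the truncation inclusion $j$.

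The paper instead invokes a different $\bA^1$-family $\mathfrak M(X/S,\tau,0)$, constructed in \cite[\S 6.4]{Porta_Yu_Non-archimedean_quantum_K-invariants} as the deformation to the normal bundle of $j$. Its truncation is the \emph{constant} family $X_0 \times \bA^1$, with derived fibre $\RoM(X/S,\tau,0)$ at $1$ and derived fibre $\mathfrak N \coloneqq X_0 \times_V X_0$ at $0$, where $V$ is the total space of $E$ and both maps are the zero section. Base change of virtual classes (\cref{prop:vfc_base_change}) together with $\bA^1$-invariance of $\HBM$ then gives $[\RoM/S] = [\mathfrak N/S]$ directly, and the excess intersection formula for the derived self-intersection computes $[\mathfrak N/S] = c_{\rank E}(E) \cap [X_0/S]$. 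Your ``alternative'' gestures in this direction, but the deformation of \cref{thm:deformation_to_tangent} applied to $\RoM \to S$ is merely the \emph{definition} of $[\RoM/S]$ and does not by itself produce the zero-section geometry of $E$; what is actually needed is a deformation of the derived structure on $\RoM$ keeping the truncation $X_0$ fixed, and that is precisely the content of the cited construction.
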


\begin{proof}
	The isomorphism $\oM(X/S,\tau,0)\simeq\oM_\tau\times X$ follows from \cite[Corollary 2.3]{Behrend_Stacks_of_stable_maps}.
	Let $\mathfrak M(X/S,\tau,0)$ be the $\bA^1_k$-deformation to the normal bundle associated to $j \colon \oM_\tau \times X \hookrightarrow \RoM(X/S,\tau,0)$ introduced in \cite[\S 6.4]{Porta_Yu_Non-archimedean_quantum_K-invariants}.
	By \cite[Proposition 6.13]{Porta_Yu_Non-archimedean_quantum_K-invariants},
	\[ \trunc( \mathfrak M(X/S, \tau, 0) ) \simeq \oM_\tau \times X \times \bA^1_k,\] 
	and we have derived fiber squares
	\[ \begin{tikzcd}
	\mathfrak N \rar \dar & \mathfrak M(X/S, \tau, 0) \dar & \RoM(X/S, \tau, 0) \dar \arrow{l} \\
	\Sp(k) \rar{0} & \bA^1_k & \Sp(k), \arrow{l}[swap]{1}
	\end{tikzcd} \]
	where 
	\[ \mathfrak N = \Spec_{\oM_\tau \times X}( \Sym_{\cO_{\oM_\tau \times X}}( \anL_j[-1])),\]
	and
	\[ \anL_j [-1] \simeq \big( \rR^1 \pi_* \cO_{\oC_\tau} \boxtimes \bbT\an_{X/S} \big)^\vee[1], \]
	by \cite[Lemma 8.4]{Porta_Yu_Non-archimedean_quantum_K-invariants}.
	
	Note that $\mathfrak M(X/S, \tau, 0)$ is derived lci, since all of its derived fibers over $\bA^1_k$ are.
		Therefore, its virtual fundamental class
	\[ [ \mathfrak M(X/S, \tau, 0) ] \in \HBM_{2d}( \oM_\tau \times X \times \bA^1_k / \bA^1_k,\bbQ(d))\]
	is well-defined, where $d \coloneqq g(\tau)\dim X/S$.
	By $\bA^1_k$-invariance, the base change homomorphisms $0^*$ and $1^*$ induce the same identification $\HBM_{2d}( \oM_\tau \times X \times \bA^1_k / \bA^1_k,\bbQ(d)) \xrightarrow{\sim} \HBM_{2d}( \oM_\tau \times X,\bbQ(d))$, so we obtain
	\[ [\RoM(X/S,\tau,0)] = 1^* ([ \mathfrak M(X/S,\tau,0)]) = 0^*( [\mathfrak M(X/S,\tau,0)] ) = [\mathfrak N] . \]
	To compute $[\mathfrak N]$, we observe that $\anL_j[-2]$ is a vector bundle over $\oM_\tau \times X$, and set
	\[ V \coloneqq \Spec_{\oM_\tau \times X}( \Sym_{\cO_{\oM_\tau \times X}}(\anL_j[-2]) ) . \]
	Then we have a derived fiber product
	\[ \begin{tikzcd}
		\mathfrak N \rar \dar & \oM_\tau \times X \dar{0} \\
		\oM_\tau \times X \rar{0} & V .
	\end{tikzcd} \]
	Using the excess intersection formula of \cite[Proposition 3.15]{Khan_Virtual_fundamental_classes}, we obtain
	\[ [\mathfrak N] = c_{g(\tau)\dim X/S}\big(\rR^1\pi_*\cO_{\oC_\tau}\boxtimes\bbT\an_{X/S}\big)\cap[\oM(X/S,\tau,0)],\]
	completing the proof.
\end{proof}

\subsection{Products} \label{sec:products_vfc}

\begin{theorem} \label{thm:products_vfc}
	Let $(\tau_1, \beta_1)$ and $(\tau_2, \beta_2)$ be two A-graphs.
	We have
	\[\RoM(X/S,\tau_1\sqcup\tau_2,\beta_1\sqcup\beta_2)]=[\RoM(X/S, \tau_1, \beta_1)] \boxtimes [\RoM(X/S, \tau_2, \beta_2)].\]
\end{theorem}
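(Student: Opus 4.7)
The plan is to reduce the theorem to a general identity on virtual fundamental classes of fiber products of lci derived stacks, and then invoke the canonical equivalence provided by \cref{thm:geometric_relations}(\ref{thm:geometric_relations:products}).

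Abbreviating $\cX_i \coloneqq \RoM(X/S,\tau_i,\beta_i)$, \cref{thm:geometric_relations}(\ref{thm:geometric_relations:products}) gives a canonical equivalence
\[ \RoM(X/S,\tau_1\sqcup\tau_2,\beta_1\sqcup\beta_2) \simeq \cX_1 \times_S \cX_2 . \]
Under this equivalence, the two sides of the theorem are both classes in the motivic Borel-Moore homology of $\cX_1 \times_S \cX_2$ over $S$, so it suffices to establish the general identity
\[ [\cX_1 \times_S \cX_2 / S] = [\cX_1/S] \boxtimes [\cX_2/S] \]
for any two lci derived \kanal stacks $\cX_1, \cX_2$ over $S$.

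To prove this identity, I would consider the cartesian square
\[ \begin{tikzcd}
\cX_1 \times_S \cX_2 \rar{p_1} \dar{p_2} & \cX_2 \dar{a_2} \\
\cX_1 \rar{a_1} & S
\end{tikzcd} \]
and chain together three formal properties of virtual fundamental classes developed in \cref{sec:vfc}. First, by the functoriality of virtual fundamental classes (\cref{prop:vfc_functoriality}) applied to the composition $a_1 \circ p_2$, we have
\[ [\cX_1 \times_S \cX_2 / S] = [\cX_1 \times_S \cX_2 / \cX_1] \circ [\cX_1/S] . \]
Second, since $p_2$ is the base change of $a_2$ along $a_1$, the base change formula (\cref{prop:vfc_base_change}) identifies
\[ [\cX_1 \times_S \cX_2 / \cX_1] = a_1^*[\cX_2/S] . \]
Third, the comparison between the external product and the right composition product with base change (\cref{prop:properties_of_products}(\ref{prop:properties_of_products:external_and_composition})) yields
\[ a_1^*[\cX_2/S] \circ [\cX_1/S] = [\cX_1/S] \boxtimes [\cX_2/S] . \]
Concatenating the three equalities gives the desired identity.

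No serious obstacle is anticipated: each step is a direct invocation of a result already set up in \cref{sec:motivic_homology,sec:vfc}, and the proof is essentially formal. The only mild bookkeeping concerns the matching of relative virtual dimensions and Tate twists on both sides, which is immediate from the additivity of virtual dimension under fiber products of lci morphisms, combined with the degree conventions of the external product in \cref{def:external_product}.
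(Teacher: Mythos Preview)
Your proposal is correct and follows essentially the same approach as the paper: both arguments invoke \cref{thm:geometric_relations}(\ref{thm:geometric_relations:products}) to identify the moduli stack with the fiber product, then combine functoriality (\cref{prop:vfc_functoriality}), base change (\cref{prop:vfc_base_change}), and the comparison between external and composition products (\cref{prop:properties_of_products}(\ref{prop:properties_of_products:external_and_composition})) in exactly the same order. The paper's version merely compresses your three displayed equalities into a single chain without isolating the general statement for arbitrary lci $\cX_1,\cX_2$.
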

\begin{proof}
	Let $a\colon \RoM(X/S, \tau_1, \beta_1)\to S$ and $b\colon \RoM(X/S, \tau_2, \beta_2)\to S$ be the structure morphisms.
	By \cref{thm:geometric_relations}(\ref{thm:geometric_relations:products}), the functoriality and base change of virtual fundamental classes (Propositions \ref{prop:vfc_functoriality} and \ref{prop:vfc_base_change}), and \cref{prop:properties_of_products}(\ref{prop:properties_of_products:external_and_composition}), we obtain
	\begin{align*}
		\RoM(X/S,\tau_1\sqcup\tau_2,\beta_1\sqcup\beta_2)]&=a^*[\RoM(X/S, \tau_2, \beta_2)] \circ [\RoM(X/S, \tau_1, \beta_1)]\\
		&=b^*[\RoM(X/S, \tau_1, \beta_1)] \circ [\RoM(X/S, \tau_2, \beta_2)]\\
		&=[\RoM(X/S, \tau_1, \beta_1)] \boxtimes [\RoM(X/S, \tau_2, \beta_2)].
	\end{align*}
\end{proof}

\subsection{Cutting edges} \label{sec:cutting_edges_vfc}

Let $(\sigma,\beta)$ be an A-graph obtained from $(\tau,\beta)$ by cutting an edge $e$ of $\tau$.
Let $v,w$ be the two tails of $\sigma$ created by the cut.
The edge $e$ gives a section $s_e$ of the universal curve $\oCpre_\tau\to\oMpre_\tau$, corresponding to the nodes.
By \cite[Theorem 5.2]{Porta_Yu_Non-archimedean_quantum_K-invariants}, we have a derived pullback diagram
\[ \begin{tikzcd}
\RoM(X/S, \tau, \beta) \rar{c} \dar{\ev_e} & \RoM(X/S, \sigma, \beta) \dar{\ev_v \times \ev_w} \\
X \rar{\Delta} & X \times_S X
\end{tikzcd} \]
where $\Delta$ is the diagonal map, $\ev_e$ is evaluation at the section $s_e$, and $c$ is induced by cutting the domain curves at $s_e$.

\begin{theorem} \label{thm:cutting_edges_vfc}
	We have
	\[[\RoM(X/S, \tau, \beta)]=\Delta^![\RoM(X/S, \sigma, \beta)].\]
\end{theorem}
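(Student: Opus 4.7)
The plan is to apply the functoriality of virtual fundamental classes (\cref{prop:vfc_functoriality}) to the derived cartesian square produced by \cref{thm:geometric_relations}(\ref{thm:geometric_relations:cutting_edges}). First I would observe that since $X/S$ is smooth, the diagonal $\Delta\colon X \to X \times_S X$ is a regular closed immersion, hence derived lci of relative virtual dimension $-\dim X/S$. Since $c\colon \RoM(X/S,\tau,\beta)\to\RoM(X/S,\sigma,\beta)$ sits in a derived cartesian square with $\Delta$, the base change formula for the analytic cotangent complex shows that $c$ is also derived lci of relative virtual dimension $-\dim X/S$.

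Next, I would invoke \cref{rem:Gysin} applied to this cartesian square, which justifies the notational identification
\[ \Delta^! = c^! \colon \HBM_\bullet(\RoM(X/S,\sigma,\beta)/S,\bbQ_S(\bullet)) \longrightarrow \HBM_{\bullet-2\dim X/S}(\RoM(X/S,\tau,\beta)/S,\bbQ_S(\bullet-\dim X/S)). \]
In particular, $\Delta^![\RoM(X/S,\sigma,\beta)] = c^![\RoM(X/S,\sigma,\beta)]$.

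It then remains to compute $c^![\RoM(X/S,\sigma,\beta)]$. Writing the structural morphisms $f_\tau\colon\RoM(X/S,\tau,\beta)\to S$ and $f_\sigma\colon\RoM(X/S,\sigma,\beta)\to S$, we have $f_\tau = f_\sigma\circ c$, where both $f_\tau$ and $f_\sigma$ are derived lci by \cref{thm:geometric_relations}. Applying \cref{prop:vfc_functoriality} to the composition $f_\tau = f_\sigma\circ c$ yields
\[ c^![\RoM(X/S,\sigma,\beta)] = [\RoM(X/S,\tau,\beta)/\RoM(X/S,\sigma,\beta)]\circ[\RoM(X/S,\sigma,\beta)/S] = [\RoM(X/S,\tau,\beta)]. \]
Combining these two identifications gives the desired equality $[\RoM(X/S,\tau,\beta)] = \Delta^![\RoM(X/S,\sigma,\beta)]$.

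There is no substantial obstacle in this argument; the whole proof reduces to assembling the derived cartesian square of \cref{thm:geometric_relations}, the base change invariance of the lci property, and the functoriality of virtual fundamental classes, all of which are at our disposal. The only point that deserves a brief verification is the matching of virtual dimensions on both sides, which is immediate from the base change formula for the cotangent complex applied to $\Delta$.
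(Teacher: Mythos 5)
Your proof is correct and follows essentially the same route as the paper: identify $\Delta^!$ with $c^!$ via \cref{rem:Gysin}, then use functoriality of the Gysin pullback (equivalently, \cref{prop:vfc_functoriality}) on the composite $a_\tau = a_\sigma\circ c$ to conclude $c^![\RoM(X/S,\sigma,\beta)] = [\RoM(X/S,\tau,\beta)]$. The only cosmetic difference is that the paper phrases the functoriality step as $f^!\circ g^! = (g\circ f)^!$ applied to $a_\tau^!(1)$, whereas you phrase it via the composition of virtual fundamental classes $[X/Y]\circ[Y/Z]=[X/Z]$; these are the two halves of the same proposition.
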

\begin{proof}
	Let
	\[a_\tau\colon\RoM(X/S, \tau, \beta)\to S, \quad a_\sigma\colon\RoM(X/S, \sigma, \beta)\to S\]
	denote the structure morphisms.
	By the functoriality of the Gysin map, we have
	\begin{multline*}
	[\RoM(X/S, \tau, \beta)]=a_\tau^!(1)=(a_\sigma \circ c)^!(1)=c^!(a_\sigma^!(1))\\
	=c^![\RoM(X/S, \sigma, \beta)]=\Delta^![\RoM(X/S, \sigma, \beta)],
	\end{multline*}
	where the last equality is only notational (see \cref{rem:Gysin}).
\end{proof}

\subsection{Universal curve} \label{sec:universal_curve_vfc}

Let $(\sigma,\beta)$ be an A-graph obtained from $(\tau,\beta)$ by forgetting a tail $t$ attached to a vertex $w$.
Let $\oCpre_w\to\oMpre_\sigma$ be the universal curve corresponding to $w$.
By \cite[\S 5.3]{Porta_Yu_Non-archimedean_quantum_K-invariants}, we have a derived pullback diagram
\[ \begin{tikzcd}
\RoM( X/S, \tau, \beta ) \rar{\pi} \dar & \RoM( X/S, \sigma, \beta ) \dar \\
\oCpre_w \rar & \oMpre_\sigma.
\end{tikzcd} \]

\begin{theorem} \label{thm:universal_curve_vfc}
	We have
	\[[\RoM(X/S, \tau, \beta)]=\pi^![\RoM(X/S, \sigma, \beta)].\]
\end{theorem}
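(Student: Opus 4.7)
The plan is to mimic the proof of \cref{thm:cutting_edges_vfc} verbatim, replacing the diagonal $\Delta$ with the universal curve map $\oCpre_w\to\oMpre_\sigma$. The key point is that the functoriality of the virtual fundamental class (\cref{prop:vfc_functoriality}) reduces the entire statement to combining the derived pullback square of \cref{thm:geometric_relations}(\ref{thm:geometric_relations:universal_curve}) with the factorization of the structure map.

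First I would verify that $\pi$ is derived lci, so that $\pi^!$ is meaningful in the sense of \cref{def:Gysin_pullback} (and agrees with the Gysin pullback of the bottom horizontal map via the convention of \cref{rem:Gysin}). The universal curve $\oCpre_w\to\oMpre_\sigma$ is a smooth relative curve of relative dimension one, hence derived lci; since derived lci morphisms are stable under derived pullback, $\pi$ is itself derived lci of relative virtual dimension one.

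Next, let $a_\tau\colon\RoM(X/S,\tau,\beta)\to S$ and $a_\sigma\colon\RoM(X/S,\sigma,\beta)\to S$ denote the structure morphisms. Both are derived lci over $S$ by \cref{thm:geometric_relations}, and by construction $a_\tau=a_\sigma\circ\pi$. Applying \cref{prop:vfc_functoriality} to this composition and evaluating at $1\in\HBM_0(S/S,\bbQ_S)$ gives
\[
[\RoM(X/S,\tau,\beta)]=a_\tau^!(1)=\pi^!\bigl(a_\sigma^!(1)\bigr)=\pi^![\RoM(X/S,\sigma,\beta)],
\]
which is the desired identity.

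There is no real obstacle: the geometric input (the derived pullback square) has already been established in \cref{thm:geometric_relations}, and the formal input (functoriality of Gysin pullback) is exactly \cref{prop:vfc_functoriality}. The only mild subtlety is ensuring the derived lci hypothesis on $\pi$, which as noted follows from base change of the smooth universal curve.
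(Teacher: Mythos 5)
Your proof is essentially the same as the paper's: factor $a_\tau=a_\sigma\circ\pi$, apply \cref{prop:vfc_functoriality}, and evaluate at $1$. One minor inaccuracy: the universal curve $\oCpre_w\to\oMpre_\sigma$ is not smooth (it has nodal fibers), but it is flat and lci, which is enough for $\pi$ to be derived lci by base change, so the argument stands.
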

\begin{proof}
	Let
	\[a_\tau\colon\RoM(X/S, \tau, \beta)\to S, \quad a_\sigma\colon\RoM(X/S, \sigma, \beta)\to S\]
	denote the structure morphisms.
	By the functoriality of the Gysin map, we have
	\[[\RoM(X/S, \tau, \beta)]=a_\tau^!(1)=(a_\sigma \circ \pi)^!(1)=\pi^!(a_\sigma^!(1))=\pi^![\RoM(X/S, \sigma, \beta)].\]
\end{proof}

\subsection{Forgetting tails} \label{sec:forgetting_tails_vfc}

Notation as in \cref{sec:universal_curve_vfc}, by \cite[\S 5.4]{Porta_Yu_Non-archimedean_quantum_K-invariants}, we have a commutative diagram where all the squares are derived pullbacks:
\[ \begin{tikzcd}
	\RoM(X/S, \tau, \beta) \rar{\Psi} \dar{\lambda} & \oM_\tau \times_{\oM_\sigma}  \RoM(X/S, \sigma, \beta) \rar{\eta} \dar{\mu} & \RoM(X/S, \sigma, \beta) \dar \\
	\oCpre_w \rar{\rho} \arrow{rd} & \oM_\tau \times_{\oM_\sigma} \oMpre_\sigma \dar \rar & \oMpre_\sigma \dar \\ {} & \oM_\tau \rar{\Phi} & \oM_\sigma .
\end{tikzcd} \]

\begin{theorem} \label{thm:forgetting_tails_vfc}
	We have
	\[\Psi_*[\RoM(X/S, \tau, \beta)]=\Phi^![\RoM(X/S, \sigma, \beta)],\]
	where $\Phi\colon\oM_\tau\to\oM_\sigma$ is the forgetting-tails map for pointed stable curves.
\end{theorem}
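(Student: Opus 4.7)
The plan is to imitate the proof of \cref{thm:universal_curve_vfc}, with one extra ingredient to handle the pushforward along $\Psi$. Let $a_\tau$ and $a_\sigma$ denote the structure morphisms of $\RoM(X/S,\tau,\beta)$ and $\RoM(X/S,\sigma,\beta)$ to $S$. Applying functoriality of the Gysin map (\cref{prop:vfc_functoriality}) to the factorization $a_\tau = a_\sigma\circ\eta\circ\Psi$ coming from the two top derived Cartesian squares, and using \cref{rem:Gysin} to identify $\eta^!$ with $\Phi^!$, I would obtain
\[
	[\RoM(X/S,\tau,\beta)] \;=\; \Psi^!\,\Phi^![\RoM(X/S,\sigma,\beta)].
\]
Applying $\Psi_*$, the theorem reduces to showing $\Psi_*\Psi^!(\alpha) = \alpha$ for $\alpha = \Phi^![\RoM(X/S,\sigma,\beta)]$; by the projection formula (\cref{prop:properties_of_products}), this is equivalent to the identity $\Psi_*\Psi^!(1) = 1$ in $\rC^\bullet(\oM_\tau\times_{\oM_\sigma}\RoM(X/S,\sigma,\beta)/S,\bbQ_S)$.

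Next, I would exploit that the top-left square is derived Cartesian, so $\Psi$ is the derived base change of the stabilization morphism $\rho\colon\oCpre_w\to\oM_\tau\times_{\oM_\sigma}\oMpre_\sigma$. Combining base change of the virtual fundamental class (\cref{prop:vfc_base_change}) with the base change formulas for Gysin pushforward and pullback (\cref{prop:Gysin_pullback_base_change,prop:Gysin_pushforward_base_change}), the identity $\Psi_*\Psi^!(1) = 1$ reduces to its analogue $\rho_*\rho^!(1) = 1$ for $\rho$, or equivalently
\[
	\rho_*[\oCpre_w] \;=\; [\oM_\tau\times_{\oM_\sigma}\oMpre_\sigma]
\]
in the motivic Borel-Moore homology of $\oM_\tau\times_{\oM_\sigma}\oMpre_\sigma$. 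Crucially, the target $X$ no longer appears in this reduced statement, and the question is purely about moduli of prestable pointed curves.

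The remaining identity is a classical geometric fact. The map $\rho$ is to be identified with Knudsen's contraction/stabilization morphism, which is a proper, derived lci of relative dimension zero, and birational morphism between smooth Artin stacks of the same dimension over $\oMpre_\sigma$. Via the compatibility between rigid analytic and algebraic motives under analytification (\cite[\S 4.6]{Ayoub_The_six}), the statement reduces to the algebraic setting, where proper birational morphisms between smooth stacks preserve motivic fundamental classes under pushforward.

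The main obstacle is this last step: one must give a careful identification of $\rho$ with Knudsen's contraction in our (derived, non-archimedean analytic, stacky) setting, verify its derived lci property of relative dimension zero by a local analysis of the analytic cotangent complex, and deduce the motivic degree-one statement $\rho_*[\oCpre_w] = [\oM_\tau\times_{\oM_\sigma}\oMpre_\sigma]$. Once this geometric input is in place, the rest of the argument is a formal manipulation of the base-change and projection formulas already established in \cref{sec:motivic_homology,sec:vfc}.
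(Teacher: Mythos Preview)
Your proof is correct and reduces to the same key geometric input as the paper, namely $\rho_*[\oCpre_w] = [\oM_\tau \times_{\oM_\sigma} \oMpre_\sigma]$. The paper organizes the reduction more directly: instead of passing through $\Psi_*\Psi^!(1)=1$ and then invoking Gysin-pushforward base change, it writes $[\RoM(X/S,\tau,\beta)]=\lambda^!(1)\circ[\oCpre_w]$ via \cref{prop:vfc_functoriality}, replaces $\lambda^!(1)$ by $\rho^*\mu^!(1)$ via \cref{prop:vfc_base_change}, and then applies the projection formula for the bivariant composition product (\cite[\S 2.3.4]{Khan_Virtual_fundamental_classes}) to move $\rho_*$ past $\mu^!(1)$; the remaining steps are then immediate from \cref{prop:vfc_functoriality} and \cref{rem:Gysin}. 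For the key identity itself, the paper simply observes that $\rho$ is an isomorphism over the dense open $(\oMpre_\sigma)^0 \subset \oMpre_\sigma$ where the pointed curve $\oCpre_w$ is already stable; your proposed verification via Knudsen's contraction, a cotangent-complex computation, and reduction to the algebraic setting through analytification amounts to the same birationality statement but is more work than necessary, since both source and target of $\rho$ are smooth underived stacks of equal dimension and an isomorphism over a dense open already forces the pushforward of fundamental classes to agree.
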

\begin{proof}
	Let $(\oMpre_\sigma)^0\subset\oMpre_\sigma$ be the dense open substack where the pointed curve $\oCpre_w$ is stable.
	Then $\rho$ is an isomorphism over $(\oMpre_\sigma)^0$.
		So we have
	\begin{equation} \label{eq:rho_forgetting_tails_vfc}
		\rho_*[\oCpre_w]=\big[\oM_\tau \times_{\oM_\sigma} \oMpre_\sigma\big].
	\end{equation}
		Next we compute
	\begin{align*}
	\Psi_*[\RoM(X/S, \tau, \beta)] &= \Psi_*\big(\lambda^!(1)\circ[\oCpre_w]\big) && \text{by \cref{prop:vfc_functoriality}} \\
	&=\Psi_*\big((\rho^*\mu^!(1))\circ[\oCpre_w]\big) && \text{by \cref{prop:vfc_base_change}} \\
	&=\mu^!(1)\circ\rho_*[\oCpre_w] && \text{by the projection formula (\cite[\S 2.3.4]{Khan_Virtual_fundamental_classes})} \\
	&=\mu^!(1)\circ\big[\oM_\tau \times_{\oM_\sigma} \oMpre_\sigma\big] && \text{by \eqref{eq:rho_forgetting_tails_vfc}} \\
	&=\big[\oM_\tau \times_{\oM_\sigma} \RoM(X/S, \sigma, \beta)\big] && \text{by \cref{prop:vfc_functoriality}} \\
	&=\eta^![\RoM(X/S, \sigma, \beta)\big] && \text{by \cref{prop:vfc_functoriality}} \\
	&=\Phi^![\RoM(X/S, \sigma, \beta)] && \text{by \cref{rem:Gysin}},
	\end{align*}
	completing the proof.
\end{proof}

\subsection{Contracting edges} \label{sec:contracting_edges_vfc}

Let $(\sigma,\beta)$ be an A-graph where $\sigma$ is obtained from a modular graph $\tau$ by contracting an edge (possibly a loop) $e$.
Let $\beta_j, j\in J$ be all possible curve classes on $\tau$ that project to $\beta$ under the contraction.
By \cite[Lemma 5.20]{Porta_Yu_Non-archimedean_quantum_K-invariants}, we have a commutative diagram where all the squares are derived pullbacks:
\[\begin{tikzcd}
\coprod_j \RoM(X/S,\tau,\beta_j) \rar{\Psi} \dar{\lambda}& \oM_\tau\times_{\oM_\sigma}\RoM(X/S,\sigma,\beta) \rar{\eta} \dar{\mu} & \RoM(X/S,\sigma,\beta)\dar\\
\oMpre_\tau \rar{\rho} \drar & \oM_\tau \times_{\oM_\sigma} \oMpre_\sigma \rar \dar & \oMpre_\sigma \dar \\
& \oM_\tau \rar{\Phi} & \oM_\sigma
\end{tikzcd}\]

\begin{theorem} \label{thm:contracting_edges_vfc}
	We have
	\[\sum_j\Psi_*[\RoM(X/S, \tau, \beta_j)]=\Phi^![\RoM(X/S, \sigma, \beta)],\]
	where $\Phi\colon\oM_\tau\to\oM_\sigma$ is the contracting-edges map for pointed stable maps.
\end{theorem}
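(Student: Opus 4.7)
The plan is to adapt the argument of the forgetting-tails proof (\cref{thm:forgetting_tails_vfc}) essentially verbatim. The central new input required is the analog of the identity $\rho_*[\oCpre_w] = [\oM_\tau \times_{\oM_\sigma} \oMpre_\sigma]$ used there, namely
\begin{equation} \label{eq:rho_contracting_star}
    \rho_*\big[\oMpre_\tau\big] = \big[\oM_\tau \times_{\oM_\sigma} \oMpre_\sigma\big].
\end{equation}
Assuming \eqref{eq:rho_contracting_star}, the rest of the proof is formal, consisting of the chain of equalities
\begin{align*}
\sum_j \Psi_*[\RoM(X/S,\tau,\beta_j)]
  &= \Psi_*\big(\lambda^!(1) \circ [\oMpre_\tau]\big)
     && \text{by \cref{prop:vfc_functoriality}} \\
  &= \Psi_*\big((\rho^*\mu^!(1)) \circ [\oMpre_\tau]\big)
     && \text{by \cref{prop:vfc_base_change}} \\
  &= \mu^!(1) \circ \rho_*[\oMpre_\tau]
     && \text{by \cite[\S 2.3.4]{Khan_Virtual_fundamental_classes}} \\
  &= \mu^!(1) \circ \big[\oM_\tau \times_{\oM_\sigma} \oMpre_\sigma\big]
     && \text{by \eqref{eq:rho_contracting_star}} \\
  &= \big[\oM_\tau \times_{\oM_\sigma} \RoM(X/S,\sigma,\beta)\big]
     && \text{by \cref{prop:vfc_functoriality}} \\
  &= \eta^![\RoM(X/S,\sigma,\beta)]
     && \text{by \cref{prop:vfc_functoriality}} \\
  &= \Phi^![\RoM(X/S,\sigma,\beta)]
     && \text{by \cref{rem:Gysin}}.
\end{align*}

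To establish \eqref{eq:rho_contracting_star}, the strategy is to show that $\rho$ is a proper morphism between smooth algebraic stacks of the same dimension, restricting to an isomorphism over the dense open substack of $\oM_\tau \times_{\oM_\sigma} \oMpre_\sigma$ parameterizing those pairs whose prestable $\sigma$-curve carries a single node at the contracted edge $e$ (as opposed to a chain of rational components). Under such hypotheses, proper birational pushforward sends the fundamental class of $\oMpre_\tau$ to the fundamental class of the target.

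The main obstacle is precisely this geometric verification. The subtlety, absent in the forgetting-tails setting, is that $\oM_\tau \times_{\oM_\sigma} \oMpre_\sigma$ admits additional boundary strata corresponding to the chains $c_l$ of length $l \ge 2$ inserted at $e$ --- precisely the strata that appear in the colimit decomposition of \cref{thm:geometric_relations}(\ref{thm:geometric_relations:contracting_edges}). One must confirm that these extra strata have strictly smaller dimension than $\oMpre_\tau$, so they do not contribute additional components to the class on the right-hand side of \eqref{eq:rho_contracting_star}, and that $\rho$ is proper onto the closure of its image despite being an isomorphism only on the complement of these strata.
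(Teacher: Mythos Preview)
Your proposal is correct and matches the paper's proof essentially verbatim, including the same chain of equalities and the same key identity $\rho_*[\oMpre_\tau] = [\oM_\tau \times_{\oM_\sigma} \oMpre_\sigma]$. For that identity, the paper bypasses your concern about the higher strata $c_l$ by simply taking as dense opens the locus $(\oMpre_\tau)^0$ where the component curves at the two endpoints of $e$ are stable and the locus $(\oMpre_\sigma)^0$ where the component curve at the contracted vertex is stable; then $\rho$ restricts directly to an isomorphism $(\oMpre_\tau)^0 \xrightarrow{\sim} \oM_\tau \times_{\oM_\sigma} (\oMpre_\sigma)^0$, so no stratification or dimension analysis is needed.
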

\begin{proof}
	Let $v_1, v_2$ be the two (possibly identical) endpoints of $e$, and $v_0$ the vertex of $\sigma$ onto which the edge $e$ is contracted.
	Let $\oCpre_{v_1}\to\oMpre_\tau$, $\oCpre_{v_2}\to\oMpre_\tau$ and $\oCpre_{v_0}\to\oMpre_\sigma$ be the corresponding universal curves.
	Let $(\oMpre_\tau)^0\subset\oMpre_\tau$ be the dense open substack where the pointed curves $\oCpre_{v_1}$ and $\oCpre_{v_2}$ are both stable.
	Let $(\oMpre_\sigma)^0\subset\oMpre_\sigma$ be the dense open substack where the pointed curve $\oCpre_{v_0}$ is stable.
	Then $\rho$ induces an isomorphism between $(\oMpre_\tau)^0$ and $\oM_\tau\times_{\oM_\sigma}(\oMpre_\sigma)^0$.
		So we have
	\begin{equation} \label{eq:rho_contracting_edges_vfc}
		\rho_*[\oMpre_\tau]=\big[\oM_\tau \times_{\oM_\sigma} \oMpre_\sigma\big].
	\end{equation}
		Then we compute
	\begin{align*}
		\sum_j \Psi_*[\RoM(X/S, \tau, \beta_j)]&=\Psi_*\bigg[\coprod_j\RoM(X/S, \tau, \beta_j)\bigg] \\
		&=\Psi_*\big(\lambda^!(1)\circ[\oMpre_\tau]\big) && \text{by \cref{prop:vfc_functoriality}} \\
		&=\Psi_*\big((\rho^*\mu^!(1))\circ[\oMpre_\tau]\big) && \text{by \cref{prop:vfc_base_change}} \\
		&=\mu^!(1)\circ\rho_*[\oMpre_\tau] && \text{by the projection formula (\cite[\S 2.3.4]{Khan_Virtual_fundamental_classes})} \\
		&=\mu^!(1)\circ\big[\oM_\tau \times_{\oM_\sigma} \oMpre_\sigma\big] && \text{by \eqref{eq:rho_contracting_edges_vfc}} \\
		&=\big[\oM_\tau \times_{\oM_\sigma} \RoM(X/S, \sigma, \beta)\big] && \text{by \cref{prop:vfc_functoriality}} \\
		&=\eta^![\RoM(X/S, \sigma, \beta)] && \text{by \cref{prop:vfc_functoriality}} \\
		&=\Phi^![\RoM(X/S, \sigma, \beta)] && \text{by \cref{rem:Gysin},}
	\end{align*}
	completing the proof.
\end{proof}

\section{Non-archimedean Gromov-Witten invariants} \label{sec:numerical_GW}

In order to obtain enumerative invariants, we consider the case where $S=\Spf k$, and we drop $/S$ from the notations.
We will assume from now on that the ground field $k$ has discrete valuation and residue characteristic zero.
The properness of the derived moduli stack $\RoM(X,\tau,\beta)$ can be ensured by the non-archimedean Gromov compactness theorem provided that $X$ is endowed with a non-archimedean Kähler structure (see \cite{Yu_Gromov_compactness}).
Consider the evaluation maps and the stabilization map of domain curves.
\[\begin{tikzcd}
	\RoM(X,\tau,\beta) \rar{\ev_i} \dar{\st} & X\\
	\oM_\tau 
\end{tikzcd}\]
\begin{definition} \label{def:GW}
	The \emph{non-archimedean Gromov-Witten invariants} are the collection of linear maps
	\begin{align*}
		I^X_{\tau,\beta}\colon \rH^*(X,\bbQ(*))^{\otimes n} &\longto \rH^*(\oM_\tau,\bbQ(*)),\\
		\bigotimes_{i\in T_\tau} a_i &\longmapsto\PD\inv\st_*\bigg(\Big(\underset{i\in T_\tau}{\cup}\ev_i^*(a_i)\Big)\cap [\RoM(X,\tau,\beta)]\bigg),
	\end{align*}
	where $\PD$ denotes the Poincaré duality isomorphism (see \cite[Theorem 2.23]{Khan_Virtual_fundamental_classes}).
	In the case where $\tau$ consists of a single vertex of genus $g$ and $n$ tails, we write the invariants as
	\[I^X_{g,n,\beta}\colon \rH^*(X,\bbQ(*))^{\otimes n} \longto \rH^*(\oM_{g,n},\bbQ(*)).\]
\end{definition}

Using the functorial properties of motivic cohomology and motivic Borel-Moore homology established in \cref{prop:properties_of_products}, we deduce the following equalities of non-archimedean Gromov-Witten invariants from the corresponding relations of virtual fundamental classes proved in \cref{sec:axioms}.

\subsection{Mapping to a point}

\begin{proposition} \label{prop:mapping_to_a_point_GW}
	In the setting of \cref{sec:mapping_to_a_point}, for any $a_i\in H^*(X,\bbQ(*))$, $i\in T_\tau$, we have
	\[I_{\tau,0}^X({\textstyle\bigotimes}_i a_i) = \PD^{-1}\st_*\Big((\cup_i\ev_i^*a_i)\cap c_{g(\tau)\dim X}\big(\rR^1\pi_*\cO_{\oC_\tau}\boxtimes\bbT\an_X\big)\Big).\]
\end{proposition}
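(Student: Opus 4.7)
The plan is to prove the proposition by direct unfolding of \cref{def:GW} and substitution of the formula for the virtual fundamental class supplied by \cref{thm:mapping_to_a_point_vfc}. By \cref{def:GW},
\[I_{\tau,0}^X({\textstyle\bigotimes}_i a_i)=\PD^{-1}\st_*\Big(\big({\textstyle\cup_i\,}\ev_i^* a_i\big)\cap[\RoM(X,\tau,0)]\Big).\]
By \cref{thm:mapping_to_a_point_vfc}, $[\RoM(X,\tau,0)]=c_{g(\tau)\dim X}\big(\rR^1\pi_*\cO_{\oC_\tau}\boxtimes\bbT\an_X\big)\cap[\oM(X,\tau,0)]$. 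Plugging this in gives the claimed identity, once we understand the notation $\cap\,c_{g(\tau)\dim X}(\cdots)$ in the statement as shorthand for $\cap\big(c_{g(\tau)\dim X}(\cdots)\cap[\oM(X,\tau,0)]\big)$; this is justified because $\oM(X,\tau,0)\simeq\oM_\tau\times X$ is smooth, so Poincaré duality (as recorded in \cite[Theorem 2.23]{Khan_Virtual_fundamental_classes}) canonically identifies the Chern class in motivic cohomology with its cap with the fundamental class in Borel--Moore homology.

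The only substantive step is the associativity identity
\[\big({\textstyle\cup_i\,}\ev_i^* a_i\big)\cap\Big(c_{\cdot}(\cdots)\cap[\oM(X,\tau,0)]\Big)=\Big(\big({\textstyle\cup_i\,}\ev_i^* a_i\big)\cup c_{\cdot}(\cdots)\Big)\cap[\oM(X,\tau,0)],\]
which is exactly the compatibility between cup and cap products established in \cref{prop:pullback_cap} applied with $f=\id_{\oM(X,\tau,0)}$ (equivalently, the $\mathbb E_\infty$-algebra structure on $\bbQ_S$ and the commutative diagram of composition products in \cref{sec:motivic_homology}). No further ingredient is required: the statement is a formal corollary of \cref{thm:mapping_to_a_point_vfc} combined with the product compatibilities collected in \cref{prop:properties_of_products}, and there is no genuine obstacle to overcome.
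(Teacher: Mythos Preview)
Your proposal is correct and matches the paper's approach: the paper does not supply a separate proof for this proposition, but states at the start of \cref{sec:numerical_GW} that all the numerical identities are deduced from the virtual fundamental class relations of \cref{sec:axioms} via the functorial properties of \cref{prop:properties_of_products}, which is exactly what you do by unfolding \cref{def:GW} and substituting \cref{thm:mapping_to_a_point_vfc}. Your remark that $\cap\,c_{g(\tau)\dim X}(\cdots)$ is shorthand for capping with the Chern class evaluated on the fundamental class of the smooth stack $\oM(X,\tau,0)\simeq\oM_\tau\times X$ is the right way to read the statement.
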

	
\subsection{Products}

\begin{proposition} \label{prop:products_GW}
	In the setting of \cref{sec:products_vfc}, for any $a_i\in H^*(X,\bbQ(*))$, $i\in T_{\tau_1}$ and $b_j\in H^*(X,\bbQ(*))$, $j\in T_{\tau_2}$, we have
	\[I^X_{\tau_1\sqcup\tau_2,\beta_1\sqcup\beta_2}\big(({\textstyle\bigotimes}_i a_i)\otimes({\textstyle\bigotimes_j} b_j)\big)=I^X_{\tau_1,\beta_1}\big({\textstyle\bigotimes}_i a_i\big)\boxtimes I^X_{\tau_2,\beta_2}\big({\textstyle\bigotimes_j} b_j\big).\]
\end{proposition}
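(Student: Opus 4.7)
The plan is to reduce the product formula for the Gromov-Witten invariants to the product formula for the virtual fundamental classes (Theorem~6.2) by systematically applying the compatibility relations from Proposition~4.15. The proof is essentially a chain of rewrites, with no genuine new geometric input beyond what has already been established in Sections~4--6.

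First, I would unwind the definition. Under the identification $\RoM(X,\tau_1\sqcup\tau_2,\beta_1\sqcup\beta_2)\simeq\RoM(X,\tau_1,\beta_1)\times\RoM(X,\tau_2,\beta_2)$ from Theorem~5.7(1), and the analogous identification $\oM_{\tau_1\sqcup\tau_2}\simeq\oM_{\tau_1}\times\oM_{\tau_2}$, the evaluation maps indexed by $T_{\tau_1}$ factor through the first projection $p_1$ and those indexed by $T_{\tau_2}$ through the second projection $p_2$, and the stabilization map $\st$ factors as $\st_1\times\st_2$. From Definition~4.10 of the external product $\alpha\boxtimes\beta=p_1^*\alpha\cup p_2^*\beta$, the cup product of pullbacks regroups as
\[\underset{i\in T_{\tau_1}}{\cup}\ev_i^*(a_i)\ \cup\ \underset{j\in T_{\tau_2}}{\cup}\ev_j^*(b_j)\ =\ \Big(\underset{i}{\cup}\ev_i^*(a_i)\Big)\boxtimes\Big(\underset{j}{\cup}\ev_j^*(b_j)\Big).\]

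Next, I would combine this with the product formula for the virtual fundamental class (Theorem~6.2) and apply Proposition~4.15(5) to distribute the cap product over the external product, yielding
\[\Big(\underset{i,j}{\cup}\ev_i^*(a_i)\cup\ev_j^*(b_j)\Big)\cap[\RoM(X,\tau_1\sqcup\tau_2,\beta_1\sqcup\beta_2)]
=\alpha_1\boxtimes\alpha_2,\]
where $\alpha_k\coloneqq(\cup_{i}\ev_i^*)\cap[\RoM(X,\tau_k,\beta_k)]$ for $k=1,2$. Then Proposition~4.15(6), applied to the proper map $\st=\st_1\times\st_2$, gives $\st_*(\alpha_1\boxtimes\alpha_2)=(\st_1)_*\alpha_1\boxtimes(\st_2)_*\alpha_2$.

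Finally, I would invoke the compatibility of Poincar\'e duality with external products, i.e.\ $\PD^{-1}(\gamma_1\boxtimes\gamma_2)=\PD^{-1}(\gamma_1)\boxtimes\PD^{-1}(\gamma_2)$ for the smooth stacks $\oM_{\tau_1}$ and $\oM_{\tau_2}$. This compatibility is a formal consequence of the Poincar\'e duality isomorphism of \cite{Khan_Virtual_fundamental_classes} being implemented by a cap product with a fundamental class whose external product decomposes naturally; the identity I need follows by tracing through Proposition~4.15(5). Assembling these steps produces the desired equality. The main potential obstacle is this last compatibility between $\PD$ and $\boxtimes$; if it is not directly available in the cited references, I would instead work at the level of Borel-Moore chains and note that the entire argument above lifts verbatim to that level (using Propositions~4.6, 4.8, and the chain-level versions in Proposition~4.15), so the statement can be phrased equivalently in Borel-Moore homology where the external product is unambiguous, and then pulled back to cohomology via $\PD^{-1}$ applied separately on each factor.
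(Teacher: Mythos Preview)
Your proposal is correct and follows essentially the same approach as the paper: reduce to the product formula for virtual fundamental classes (Theorem~\ref{thm:products_vfc}), then apply \cref{prop:properties_of_products}(\ref{prop:properties_of_products:compatibility_external_cap}) and (\ref{prop:properties_of_product:external_product_pushforward}) to distribute the cap product and the pushforward across $\boxtimes$, and finally handle the Poincar\'e duality step via (\ref{prop:properties_of_products:compatibility_external_cap}) again since $\PD$ is itself a cap product. Your cautious discussion of the $\PD$--$\boxtimes$ compatibility is unnecessary---the paper justifies it in one line by exactly the argument you sketch.
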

\begin{proof}
	For each $i\in T_{\tau_1}$ and $j\in T_{\tau_2}$, we denote the evaluation maps by
	\begin{align*}
		\ev^1_i&\colon\RoM(X,\tau_1,\beta_1)\longrightarrow X,\\
		\ev^2_j&\colon\RoM(X,\tau_2,\beta_2)\longrightarrow X.
	\end{align*}
	Using \cref{thm:products_vfc}, we compute
	\begin{align*}
		& I^X_{\tau_1\sqcup\tau_2,\beta_1\sqcup\beta_2}\big(({\textstyle\bigotimes}_i a_i)\otimes({\textstyle\bigotimes_j b_j)\big)} \\ = {} & \PD^{-1}\st_*\Big(\big((\cup_i\ev_i^{1*} a_i)\boxtimes(\cup_j\ev_j^{2*}b_j)\big)\cap\big([\RoM(X,\tau_1,\beta_1)]\boxtimes[\RoM(X,\tau_2,\beta_2)]\big)\Big) \\
		= {} & \PD^{-1}\st_*\Big(\big((\cup_i\ev_i^{1*} a_i)\cap[\RoM(X,\tau_1,\beta_1)]\big)\boxtimes\big((\cup_j\ev_j^{2*}b_j)\big)\cap[\RoM(X,\tau_2,\beta_2)]\big)\Big) \\
		= {} & \PD^{-1}\Big(\st_*\big((\cup_i\ev_i^{1*} a_i)\cap[\RoM(X,\tau_1,\beta_1)]\big)\boxtimes\st_*\big((\cup_j\ev_j^{2*}b_j)\big)\cap[\RoM(X,\tau_2,\beta_2)]\big)\Big) \\
		= {} & \PD^{-1}\st_*\big((\cup_i\ev_i^{1*} a_i)\cap[\RoM(X,\tau_1,\beta_1)]\big)\boxtimes\PD^{-1}\st_*\big((\cup_j\ev_j^{2*}b_j)\big)\cap[\RoM(X,\tau_2,\beta_2)]\big) \\
		= {} & I^X_{\tau_1,\beta_1}\big({\textstyle\bigotimes}_i a_i\big)\boxtimes I^X_{\tau_2,\beta_2}\big({\textstyle\bigotimes_j} b_j\big),
	\end{align*}
	where the second equality follows from \cref{prop:properties_of_products}(\ref{prop:properties_of_products:compatibility_external_cap}), the third from \cref{prop:properties_of_products}(\ref{prop:properties_of_product:external_product_pushforward}), and the fourth follows also from \cref{prop:properties_of_products}(\ref{prop:properties_of_products:compatibility_external_cap}), since the Poincaré duality isomorphism is given by the cap product.
\end{proof}
	
\subsection{Cutting edges}

\begin{proposition} \label{prop:cutting_edges_GW}
	In the setting of \cref{sec:cutting_edges_vfc}, for any $a_i\in H^*(X,\bbQ(*)), i\in T_\tau$, we have
	\[d_! I^X_{\tau,\beta}\big({\textstyle\bigotimes}_i a_i\big)=I^X_{\sigma,\beta}\big(\Delta_! 1_X\otimes({\textstyle\bigotimes}_i a_i)\big),\]
	where $d\colon\oM_\tau\to\oM_\sigma$ is induced by cutting the edge.
\end{proposition}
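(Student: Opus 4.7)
The plan is to unpack both sides using \cref{def:GW} and reduce the identity to the cartesian square of \cref{thm:geometric_relations}(\ref{thm:geometric_relations:cutting_edges}), the virtual class identity $[\RoM(X,\tau,\beta)]=\Delta^![\RoM(X,\sigma,\beta)]$ of \cref{thm:cutting_edges_vfc}, and the projection formulas of \cref{prop:properties_of_products}. Write $c\colon\RoM(X,\tau,\beta)\to\RoM(X,\sigma,\beta)$ for the cutting map and $\ev_{vw}\coloneqq\ev_v\times\ev_w$. Interpreting $\Delta_!1_X\otimes\bigotimes_ia_i$ as the external class pulled back along the total evaluation, the right-hand side is
\[\PD^{-1}\st^\sigma_*\Big(\big((\cup_{i\in T_\tau}\ev_i^{\sigma,*}a_i)\cup\ev_{vw}^*\Delta_!1_X\big)\cap[\RoM(X,\sigma,\beta)]\Big).\]

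First I would apply \cref{prop:Gysin_pushforward_base_change} to the cartesian square of \cref{thm:geometric_relations}(\ref{thm:geometric_relations:cutting_edges}) to rewrite $\ev_{vw}^*\Delta_!1_X=c_!1$. Using associativity of cup and cap products, the expression inside $\st^\sigma_*$ becomes $(\cup_i\ev_i^{\sigma,*}a_i)\cap(c_!1\cap[\RoM(X,\sigma,\beta)])$. Applying \cref{prop:properties_of_products}(\ref{prop:properties_of_products:projection_formula_cap_Gysin}) with $\alpha=1$, then invoking \cref{thm:cutting_edges_vfc} (together with \cref{rem:Gysin} identifying $c^!$ with $\Delta^!$ here), I get $c_!1\cap[\RoM(X,\sigma,\beta)]=c_*[\RoM(X,\tau,\beta)]$. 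Next, \cref{prop:properties_of_products}(\ref{prop:properties_of_products:projection_formula_cap}) moves the remaining cup product through $c_*$, and the tautology $c^*\ev_i^{\sigma,*}=\ev_i^{\tau,*}$ (since the tails of $T_\tau$ are unaffected by the cut) turns the argument of $\st^\sigma_*$ into $c_*\big((\cup_i\ev_i^{\tau,*}a_i)\cap[\RoM(X,\tau,\beta)]\big)$.

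The geometric compatibility $\st^\sigma\circ c=d\circ\st^\tau$ is built into the construction: cutting an edge on the source of a stable map corresponds via stabilization to cutting on the underlying marked curve. Hence $\st^\sigma_*\circ c_*=d_*\circ\st^\tau_*$, so the right-hand side is $\PD^{-1}d_*\st^\tau_*\big((\cup_i\ev_i^{\tau,*}a_i)\cap[\RoM(X,\tau,\beta)]\big)$. To conclude I would invoke the Poincaré duality compatibility $\PD^{-1}\circ d_*=d_!\circ\PD^{-1}$ for the proper morphism $d$ between the smooth proper \DM stacks $\oM_\tau$ and $\oM_\sigma$; this follows from \cref{def:Gysin_pushforward} combined with \cref{prop:vfc_functoriality} applied to the triangles $\oM_\tau\to\oM_\sigma\to\Spec k$, since on a smooth stack $Y$ the virtual fundamental class $[Y/\Spec k]$ coincides with the Poincaré duality class used to define $\PD$.

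The main obstacle I anticipate is this final Poincaré duality step: checking that the Tate twists and degree shifts coming from $\PD^{-1}$ are compatible with those in \cref{def:Gysin_pushforward}, and more fundamentally that the motivic Poincaré duality isomorphism is functorial in the sense required. Everything else is a direct application of the functorialities already collected in \cref{prop:properties_of_products} together with the base change principles of Propositions~\ref{prop:Gysin_pullback_base_change} and \ref{prop:Gysin_pushforward_base_change}.
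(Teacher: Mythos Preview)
Your proposal is correct and follows essentially the same route as the paper's proof, only run in the opposite direction: the paper starts from $d_! I^X_{\tau,\beta}$ and pushes forward using $d_!\PD^{-1}=\PD^{-1}d_*$ and $\st_\sigma\circ c=d\circ\st_\tau$, then applies \cref{prop:properties_of_products}(\ref{prop:properties_of_products:pullback_cap}), \cref{prop:properties_of_products}(\ref{prop:properties_of_products:projection_formula_cap_Gysin}), and \cref{prop:Gysin_pushforward_base_change} in that order to reach the right-hand side, whereas you begin from $I^X_{\sigma,\beta}(\Delta_!1_X\otimes\cdots)$ and unwind the same identities in reverse (using \cref{prop:properties_of_products}(\ref{prop:properties_of_products:projection_formula_cap}) in place of (\ref{prop:properties_of_products:pullback_cap}) at the corresponding step). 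The Poincar\'e duality compatibility you flag as the main obstacle is treated by the paper as routine, and your sketch of why it holds is adequate.
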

\begin{proof}
	Consider the commutative diagram
	\[ \begin{tikzcd}
	\oM_\tau \rar{d} & \oM_\sigma\\
	\RoM(X, \tau, \beta) \uar{\st_\tau} \rar{c} \dar{\ev_e} & \RoM(X, \sigma, \beta) \uar{\st_\sigma} \dar{\ev_v \times \ev_w} \\
	X \rar{\Delta} & X \times_S X.
	\end{tikzcd} \]
	For each $i\in T_\tau$, we denote the evaluation maps by
	\begin{align*}
		\ev^\tau_i&\colon\RoM(X,\tau,\beta)\longrightarrow X,\\
		\ev^\sigma_i&\colon\RoM(X,\sigma,\beta)\longrightarrow X.
	\end{align*}
	We have $\ev^\tau_i=\ev^\sigma_i\circ c$.
	We obtain
	\begin{align*}
		d_! I^X_{\tau,\beta}\big({\textstyle\bigotimes}_i a_i\big)
		&= d_! \PD^{-1}\st_{\tau*} \big((\cup_i\ev^{\tau*}_i a_i)\cap[\RoM(X,\tau,\beta)]\big)\\
		&= \PD^{-1}d_*\st_{\tau*} \Big(\big(\ev_e^* 1_X\cup(\cup_i\ev^{\tau*}_i a_i)\big)\cap[\RoM(X,\tau,\beta)]\Big)\\
		&= \PD^{-1}\st_{\sigma*} c_*\Big(\ev_e^* 1_X\cap\big( c^*(\cup_i\ev^{\sigma*}_i a_i)\cap c^![\RoM(X,\sigma,\beta)]\big)\Big) \\ & && \text{\hspace{-5em} by \cref{prop:vfc_functoriality}} \\
		&= \PD^{-1}\st_{\sigma*} c_*\Big(\ev_e^* 1_X\cap c^!\big( (\cup_i\ev^{\sigma*}_i a_i)\cap [\RoM(X,\sigma,\beta)]\big)\Big)\\
		& && \text{\hspace{-5em} by \cref{prop:properties_of_products}(\ref{prop:properties_of_products:pullback_cap})} \\
		&= \PD^{-1}\st_{\sigma*} \Big(c_!\ev_e^* 1_X\cap \big( (\cup_i\ev^{\sigma*}_i a_i)\cap [\RoM(X,\sigma,\beta)]\big)\Big) \\
		& && \text{\hspace{-5em} by \cref{prop:properties_of_products}(\ref{prop:properties_of_products:projection_formula_cap_Gysin})} \\
		&= \PD^{-1}\st_{\sigma*} \Big( (\ev_v\times\ev_w)^*\Delta_! 1_X\cap \big( (\cup_i\ev^{\sigma*}_i a_i)\cap [\RoM(X,\sigma,\beta)]\big)\Big)\\
		& && \text{\hspace{-5em} by \cref{prop:Gysin_pushforward_base_change}} \\
		&= \PD^{-1}\st_{\sigma*} \Big(\big((\ev_v\times\ev_w)^*\Delta_! 1_X\cup(\cup_i\ev^{\sigma*}_i a_i)\big)\cap [\RoM(X,\sigma,\beta)]\big)\Big)\\
		&= I^X_{\sigma,\beta}\big(\Delta_! 1_X\otimes({\textstyle\bigotimes}_i a_i)\big),
	\end{align*}
	completing the proof.
\end{proof}
	
\subsection{Forgetting tails}

\begin{proposition} \label{prop:forgetting_tails_GW}
	In the setting of \cref{sec:forgetting_tails_vfc}, for any $a_i\in H^*(X,\bbQ(*)), i\in T_\sigma$, we have
	\[I^X_{\tau,\beta}\big(({\textstyle\bigotimes}_i a_i)\otimes 1_X\big)=\Phi^*I^X_{\sigma,\beta}\big({\textstyle\bigotimes}_i a_i\big).\]
\end{proposition}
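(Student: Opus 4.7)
The plan is to reduce the identity to the forgetting-tails VFC theorem \cref{thm:forgetting_tails_vfc}, in the same spirit as the proof of \cref{prop:cutting_edges_GW}. The crucial simplification is that the tail $t$ being forgotten contributes only $\ev_t^*(1_X) = 1$ in the definition of $I^X_{\tau,\beta}$, so the cup product reduces to the evaluation classes indexed by $T_\sigma$. Since each such $\ev_i^\tau$ factors through $\eta \circ \Psi$ using the diagram in \cref{sec:forgetting_tails_vfc}, I can pull this cup product back from $\alpha \coloneqq \cup_{i\in T_\sigma}\ev_i^{\sigma *} a_i$ on $\RoM(X,\sigma,\beta)$. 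Also, writing $p$ for the projection $\oM_\tau \times_{\oM_\sigma}\RoM(X,\sigma,\beta) \to \oM_\tau$, we have $\st_\tau = p \circ \Psi$.

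The main chain of identifications would then push $\Psi_*$ onto the virtual fundamental class via the projection formula \cref{prop:properties_of_products}(\ref{prop:properties_of_products:projection_formula_cap}), apply \cref{thm:forgetting_tails_vfc} to replace $\Psi_*[\RoM(X,\tau,\beta)]$ by $\Phi^![\RoM(X,\sigma,\beta)]$, commute $\eta^*$ through $\eta^! = \Phi^!$ using \cref{prop:properties_of_products}(\ref{prop:properties_of_products:pullback_cap}) (valid since $\Phi$ is smooth and $\eta$ is its base change; cf.\ \cref{rem:Gysin}), and finally apply the base change \cref{prop:Gysin_pullback_base_change} to the right-hand square of the diagram in \cref{sec:forgetting_tails_vfc} to move $\Phi^!$ outside the stabilization pushforward $p_*$.

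To conclude, I would need the identity $\PD^{-1}\circ \Phi^! = \Phi^* \circ \PD^{-1}$ on motivic (co)homology of $\oM_\sigma$. This follows formally by writing Poincaré duality as cap with the fundamental class, using $\Phi^![\oM_\sigma] = [\oM_\tau]$ (functoriality of VFC, \cref{prop:vfc_functoriality}), and invoking the pullback/cap compatibility \cref{prop:properties_of_products}(\ref{prop:properties_of_products:pullback_cap}) once more.

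The main obstacle here is organizational bookkeeping: verifying that $\Phi \colon \oM_\tau \to \oM_\sigma$ is smooth (standard for forgetful maps between Deligne-Mumford moduli of pointed stable curves) and that the fiber-product squares in the diagram of \cref{sec:forgetting_tails_vfc} are genuine derived pullbacks (already established in \cite{Porta_Yu_Non-archimedean_quantum_K-invariants}). Once these inputs are in place, the argument is a formal six-functor manipulation directly parallel to the proof of \cref{prop:cutting_edges_GW}, with \cref{thm:forgetting_tails_vfc} playing the role that the cutting-edges VFC identity plays there.
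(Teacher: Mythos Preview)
Your proposal is correct and follows essentially the same route as the paper's proof: factor $\st_\tau$ through $\Psi$ and the projection to $\oM_\tau$, use the projection formula to push $\Psi_*$ onto the virtual class, invoke \cref{thm:forgetting_tails_vfc}, then apply \cref{prop:properties_of_products}(\ref{prop:properties_of_products:pullback_cap}) and \cref{prop:Gysin_pullback_base_change} to arrive at $\Phi^!$ of the $\sigma$-side expression, and finally trade $\PD^{-1}\Phi^!$ for $\Phi^*\PD^{-1}$. One small correction: the forgetting-tails map $\Phi\colon\oM_\tau\to\oM_\sigma$ is the universal curve over the factor corresponding to $w$ and is therefore flat and lci but not smooth (the fibers have nodes); fortunately only the lci property is needed for \cref{rem:Gysin} and the Gysin formalism, so your argument goes through unchanged.
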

\begin{proof}
	Consider the commutative diagram
	\[\begin{tikzcd}
		\RoM(X, \tau, \beta) \rar{\Psi} \dar{\lambda}\arrow{rdd}[near end, swap]{\st_\tau} & \oM_\tau \times_{\oM_\sigma}  \RoM(X, \sigma, \beta) \rar{\eta} \dar{\mu} & \RoM(X, \sigma, \beta) \dar{\kappa} \arrow[bend left=40]{dd}{\st_\sigma} \\
		\oCpre_w \arrow[crossing over]{r}[near start]{\rho} & \oM_\tau \times_{\oM_\sigma} \oMpre_\sigma \dar{\xi} \rar & \oMpre_\sigma \dar{\theta} \\
		{} & \oM_\tau \rar{\Phi} & \oM_\sigma .
	\end{tikzcd} \]
	For each $i\in T_\sigma$, we denote the evaluation maps by
	\begin{align*}
	\ev^\tau_i&\colon\RoM(X,\tau,\beta)\longrightarrow X,\\
	\ev^\sigma_i&\colon\RoM(X,\sigma,\beta)\longrightarrow X,
	\end{align*}
	where $\ev^\tau_i=\ev^\sigma_i\circ\eta\circ\Psi$.
	Let $t\in T_\tau$ denote the forgotten tail.
	We obtain
	\begin{align*}
		I^X_{\tau,\beta}\big(({\textstyle\bigotimes}_i a_i)\otimes 1_X\big)
		&= \PD^{-1}\st_{\tau*} \Big(\big((\cup_i\ev^{\tau*}_i a_i)\cup\ev_t^* 1_X\big)\cap[\RoM(X,\tau,\beta)]\Big)\\
		&= \PD^{-1}\xi_*\mu_*\Psi_*\big(\Psi^*\eta^*(\cup_i\ev^{\sigma*}_i a_i)\cap[\RoM(X,\tau,\beta)]\big)\\
		&= \PD^{-1}\xi_*\mu_*\big(\eta^*(\cup_i\ev^{\sigma*}_i a_i)\cap\Psi_*[\RoM(X,\tau,\beta)]\big) && \text{by \cref{prop:properties_of_products}(\ref{prop:properties_of_products:projection_formula_cap})} \\
		&= \PD^{-1}\xi_*\mu_*\big(\eta^*(\cup_i\ev^{\sigma*}_i a_i)\cap\Phi^![\RoM(X,\sigma,\beta)]\big) && \text{by \cref{thm:forgetting_tails_vfc}} \\
		&= \PD^{-1}\xi_*\mu_*\big(\eta^*(\cup_i\ev^{\sigma*}_i a_i)\cap\eta^![\RoM(X,\sigma,\beta)]\big) && \text{by \cref{rem:Gysin}} \\
		&= \PD^{-1}\xi_*\mu_*\eta^!\big((\cup_i\ev^{\sigma*}_i a_i)\cap[\RoM(X,\sigma,\beta)]\big) && \text{by \cref{prop:properties_of_products}(\ref{prop:properties_of_products:pullback_cap})} \\
		&= \PD^{-1}\Phi^!\theta_*\kappa_*\big((\cup_i\ev^{\sigma*}_i a_i)\cap[\RoM(X,\sigma,\beta)]\big) && \text{by \cref{prop:Gysin_pullback_base_change}} \\
		&= \Phi^*\PD^{-1}\st_{\sigma*}\big((\cup_i\ev^{\sigma*}_i a_i)\cap[\RoM(X,\sigma,\beta)]\big)\\
		&= \Phi^*I^X_{\sigma,\beta}\big({\textstyle\bigotimes}_i a_i\big),
	\end{align*}
	completing the proof.
\end{proof}
	
\subsection{Contracting edges}

\begin{proposition} \label{prop:contracting_edges_GW}
	In the setting of \cref{sec:contracting_edges_vfc}, for any $a_i\in H^*(X,\bbQ(*)), i\in T_\sigma$, we have
	\[\sum_j I^X_{\tau,\beta_j}\big({\textstyle\bigotimes}_i a_i\big)=\Phi^* I^X_{\sigma,\beta}\big({\textstyle\bigotimes}_i a_i\big).\]
\end{proposition}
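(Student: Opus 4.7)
The plan is to mimic verbatim the proof of \cref{prop:forgetting_tails_GW}, using the virtual fundamental class identity from \cref{thm:contracting_edges_vfc} in place of the one from \cref{thm:forgetting_tails_vfc}. Structurally the two situations are essentially identical: the commutative diagram recalled in \cref{sec:contracting_edges_vfc} looks exactly like the one in \cref{sec:forgetting_tails_vfc}, with $\oMpre_\tau$ taking the role of $\oCpre_w$ and $\coprod_j \RoM(X,\tau,\beta_j)$ taking the role of $\RoM(X,\tau,\beta)$. Crucially, since contracting an edge does not alter the set of tails, we have $T_\tau = T_\sigma$, and on each component $\RoM(X,\tau,\beta_j)$ the evaluation maps satisfy $\ev^\tau_i = \ev^\sigma_i \circ \eta \circ \Psi$, which is what allows the pullback of cohomology classes to be shuffled across $\Psi$ and $\eta$.

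Concretely, I would first rewrite $\sum_j I^X_{\tau,\beta_j}(\bigotimes_i a_i)$ as $\PD^{-1}\st_{\tau*}\bigl((\cup_i \ev^{\tau*}_i a_i) \cap [\coprod_j \RoM(X,\tau,\beta_j)]\bigr)$, factor $\st_\tau = \xi_\ast \mu_\ast \Psi_\ast$, and apply the projection formula for the cap product (\cref{prop:properties_of_products}(\ref{prop:properties_of_products:projection_formula_cap})) to pull the cohomology classes outside $\Psi_\ast$. Next, substitute the identity $\sum_j \Psi_*[\RoM(X,\tau,\beta_j)] = \Phi^![\RoM(X,\sigma,\beta)]$ provided by \cref{thm:contracting_edges_vfc}, then reinterpret $\Phi^!$ as $\eta^!$ via \cref{rem:Gysin}.

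At this point the remaining manipulations are formal: use compatibility between pullback and cap product (\cref{prop:properties_of_products}(\ref{prop:properties_of_products:pullback_cap})) to bring $\eta^!$ out of the cap product, then apply the base change formula \cref{prop:Gysin_pullback_base_change} to the Cartesian square with legs $\xi, \mu, \theta, \kappa$ to swap $\xi_\ast \mu_\ast \eta^!$ for $\Phi^! \theta_\ast \kappa_\ast$. Finally, noting that $\st_\sigma = \theta_\ast \kappa_\ast$, and that the Poincaré duality isomorphism is implemented by a cap product so that it commutes with $\Phi^!$ in the appropriate sense (turning $\Phi^!$ into $\Phi^*$ after identifying the target), one recovers $\Phi^* I^X_{\sigma,\beta}(\bigotimes_i a_i)$.

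The main obstacle is not conceptual but merely bookkeeping: one must verify that the evaluation map factorization is compatible across all the components of $\coprod_j \RoM(X,\tau,\beta_j)$, which is immediate because the evaluation maps are defined through the universal curve and the universal stable map, both of which are preserved under the canonical equivalence of \cref{thm:geometric_relations}(\ref{thm:geometric_relations:contracting_edges}). Once this is observed, every step of the chain of equalities has already been carried out in the proof of \cref{prop:forgetting_tails_GW}, so the proof will be a one-to-one translation with the single substitution indicated above.
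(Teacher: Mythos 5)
Your proposal is correct and follows exactly the paper's own proof, which indeed performs a one-to-one translation of the chain of equalities from the proof of \cref{prop:forgetting_tails_GW}, with the forgetting-tails virtual fundamental class identity replaced by the contracting-edges identity from \cref{thm:contracting_edges_vfc} (the paper even says explicitly that the justifications of the equalities are similar to those in \cref{prop:forgetting_tails_GW}). Your intermediate observations --- $T_\tau = T_\sigma$, the evaluation-map factorization $\ev^\tau_i = \ev^\sigma_i \circ \eta \circ \Psi$, and linearity over the components of $\coprod_j \RoM(X,\tau,\beta_j)$ --- are precisely what makes the translation work.
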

\begin{proof}
	For each $i\in T_\sigma$, we denote the evaluation maps by
	\begin{align*}
		\ev^\tau_i&\colon\coprod_j \RoM(X/S,\tau,\beta_j)\longrightarrow X,\\
		\ev^\sigma_i&\colon\RoM(X,\sigma,\beta)\longrightarrow X,
	\end{align*}
	where $\ev^\tau_i=\ev^\sigma_i\circ\eta\circ\Psi$.
	We obtain
	\begin{align*}
		\sum_j I^X_{\tau,\beta_j}\big({\textstyle\bigotimes}_i a_i\big)
		&= \sum_j\PD^{-1}\st_{\tau*} \big((\cup_i\ev^{\tau*}_i a_i)\cap[\RoM(X,\tau,\beta_j)]\big)\\
		&= \sum_j\PD^{-1}\xi_*\mu_*\Psi_*\big(\Psi^*\eta^*(\cup_i\ev^{\sigma*}_i a_i)\cap[\RoM(X,\tau,\beta_j)]\big)\\
		&= \sum_j \PD^{-1}\xi_*\mu_*\big(\eta^*(\cup_i\ev^{\sigma*}_i a_i)\cap\Psi_*[\RoM(X,\tau,\beta)]\big)\\
		&= \PD^{-1}\xi_*\mu_*\big(\eta^*(\cup_i\ev^{\sigma*}_i a_i)\cap\Phi^![\RoM(X,\sigma,\beta)]\big)\\
		&= \PD^{-1}\xi_*\mu_*\big(\eta^*(\cup_i\ev^{\sigma*}_i a_i)\cap\eta^![\RoM(X,\sigma,\beta)]\big)\\
		&= \PD^{-1}\xi_*\mu_*\eta^!\big((\cup_i\ev^{\sigma*}_i a_i)\cap[\RoM(X,\sigma,\beta)]\big)\\
		&= \PD^{-1}\Phi^!\theta_*\kappa_*\big((\cup_i\ev^{\sigma*}_i a_i)\cap[\RoM(X,\sigma,\beta)]\big)\\
		&= \Phi^*\PD^{-1}\st_{\sigma*}\big((\cup_i\ev^{\sigma*}_i a_i)\cap[\RoM(X,\sigma,\beta)]\big)\\
		&= \Phi^*I^X_{\sigma,\beta}\big({\textstyle\bigotimes}_i a_i\big),
	\end{align*}
	where the justifications of the equalities are similar to those in the proof of \cref{prop:forgetting_tails_GW}.
\end{proof}

\section{Non-archimedean Gromov-Witten invariants with naive tangencies} \label{sec:tangencies}

In this section, we introduce non-archimedean Gromov-Witten invariants with naive tangencies, a generalization of those considered in \cref{sec:numerical_GW}.
We refer to the introduction (\cref{sec:intro}) for a discussion of the underlying geometric idea.

Fix a proper smooth \kanal space $X$, an $A$-graph $(\tau,\beta)$, and a tail vertex $i\in T_\tau$.
First we would like to generalize the evaluation map
\[\ev_i\colon\RoM(X,\tau,\beta)\longto X\]
in order to take into account the jets at the $i$-th marked point.
Let $m_i$ be a positive integer.
Let $s_i\colon\oMpre_\tau\to\oCpre_\tau$ be the $i$-th section of the universal curve, $\cI_i$ the ideal sheaf on $\oCpre_\tau$ cutting out the image of $s_i$, and $(\oCpre_\tau)_{(s_i^{m_i})}$ the closed substack given by the $m_i$-th power of the ideal $\cI_i$.
Let $\ev_i^{m_i}$ denote the composition of the inclusion
\[\RoM(X,\tau,\beta) \longhookrightarrow \bfMap_{\oMpre_\tau}\bigl(\oCpre_\tau, X\times\oMpre_\tau\bigr).\]
and the restriction
\[\bfMap_{\oMpre_\tau}\bigl(\oCpre_\tau, X\times\oMpre_\tau\bigr) \longto \bfMap_{\oMpre_\tau}\Bigl((\oCpre_\tau)_{(s_i^{m_i})}, X\times \oMpre_\tau\Bigr)\eqqcolon X_{i,\tau}^{m_i}.\]
We call
\[\ev_i^{m_i}\colon\RoM(X,\tau,\beta)\longrightarrow X_{i,\tau}^{m_i}\]
\emph{the evaluation map of the $i$-th marked point of order $m_i$}.

Given any closed analytic subspace $Z_i\subset X$ such that the inclusion is lci, let
\[Z_{i,\tau}^{m_i}\coloneqq\bfMap_{\oMpre_\tau}\Bigl((\oCpre_\tau)_{(s_i^{m_i})}, Z_i\times \oMpre_\tau\Bigr).\]
By \cite[Lemma 9.1]{Porta_Yu_Non-archimedean_quantum_K-invariants}, the derived \kanal stack $X_{i,\tau}^{m_i}$ is smooth over $\oMpre_\tau$, in particular underived; and the map $\zeta_i\colon Z_{i,\tau}^{m_i}\to X_{i,\tau}^{m_i}$ is derived lci.
Therefore, by \cref{def:virtual_fundamental_class}, we have a virtual fundamental class $[Z_{i,\tau}^{m_i}]\in\HBM_*(Z_{i,\tau}^{m_i},\bbQ(*))$.

\begin{definition} \label{def:GW_with_tangency}
	Given any A-graph $(\tau,\beta)$, $\mathbf m_i=(m_i)_{i\in T_\tau}$ with $m_i\in\bbN_{>0}$, and $\bZ=(Z_i)_{i\in T_\tau}$ with $Z_i\subset X$ closed lci, we define the associated \emph{Gromov-Witten invariants of $X$ with naive tangencies}
	\[
	I^X_{\tau,\beta,\mathbf m}(\bZ)\coloneqq \PD^{-1}\st_*\bigg(\Big(\underset{i\in T_\tau}{\cup}(\ev_i^{m_i})^*\PD^{-1}\zeta_{i*}[Z_{i,\tau}^{m_i}]\Big)\cap[\RoM(X,\tau,\beta)]\bigg) \in H^*(\oM_\tau,\bbQ(*)).\]
\end{definition}

The Gromov-Witten invariants with naive tangencies satisfy the following list of natural relations exactly parallel to the previous section.

\subsection{Mapping to a point}

\begin{proposition} \label{prop:mapping_to_a_point_with_tangency}
	In the setting of \cref{sec:mapping_to_a_point}, for any $\mathbf m=(m_i)_{i\in T_\tau}$ and $\bZ=(Z_i)_{i\in T_\tau}$ as in \cref{def:GW_with_tangency}, we have
	\[I^X_{\tau,\beta,\mathbf m}(\bZ) = \PD^{-1}\st_*\bigg(\Big(\underset{i\in T_\tau}{\cup}(\ev_i^{m_i})^*\PD^{-1}\zeta_{i*}[Z_{i,\tau}^{m_i}]\Big)\cap c_{g(\tau)\dim X}\big(\rR^1\pi_*\cO_{\oC_\tau}\boxtimes\bbT\an_X\big)\bigg).\]
\end{proposition}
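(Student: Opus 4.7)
The plan is to mimic the proof of \cref{prop:mapping_to_a_point_GW} verbatim: at the level of formal manipulations, the only difference between $I^X_{\tau,0,\mathbf m}(\bZ)$ and $I^X_{\tau,0}(\bigotimes_i a_i)$ is that the cohomology class $\cup_i\ev_i^\ast a_i$ on $\RoM(X,\tau,0)$ is replaced by the cohomology class $\cup_i(\ev_i^{m_i})^\ast\PD^{-1}\zeta_{i\ast}[Z_{i,\tau}^{m_i}]$ on the same stack. No new functoriality of motivic (co)homology is required.

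Concretely, I would first unwind \cref{def:GW_with_tangency} with $\beta=0$, obtaining
\[I^X_{\tau,0,\mathbf m}(\bZ) = \PD^{-1}\st_*\bigg(\Big(\underset{i\in T_\tau}{\cup}(\ev_i^{m_i})^*\PD^{-1}\zeta_{i*}[Z_{i,\tau}^{m_i}]\Big)\cap[\RoM(X,\tau,0)]\bigg).\]
Next, I would substitute the explicit formula for the virtual fundamental class provided by \cref{thm:mapping_to_a_point_vfc},
\[[\RoM(X,\tau,0)]=c_{g(\tau)\dim X}\big(\rR^1\pi_*\cO_{\oC_\tau}\boxtimes\bbT\an_X\big)\cap[\oM(X,\tau,0)],\]
into the previous display. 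Associativity of the cap product, which holds at the chain level by \cref{prop:pullback_cap} applied with $f=\id$, allows the bracketed cup of pulled-back classes to act on the resulting iterated cap in the desired order, yielding the expression in the statement. As in \cref{prop:mapping_to_a_point_GW}, the notational convention is that a cap against the Chern class $c_{g(\tau)\dim X}\big(\rR^1\pi_*\cO_{\oC_\tau}\boxtimes\bbT\an_X\big)$ is shorthand for the cap against its cap with $[\oM(X,\tau,0)]$.

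There is essentially no technical obstacle: the proposition is a direct formal substitution. The only mild preliminary worth flagging is that $(\ev_i^{m_i})^\ast\PD^{-1}\zeta_{i\ast}[Z_{i,\tau}^{m_i}]$ really is a well-defined cohomology class on $\RoM(X,\tau,0)$; this is granted by the smoothness of $X_{i,\tau}^{m_i}$ over $\oMpre_\tau$ (so that Poincaré duality applies) and the fact that $\zeta_i$ is derived lci (so that the virtual fundamental class $[Z_{i,\tau}^{m_i}]$ exists), both of which were recalled just before \cref{def:GW_with_tangency}.
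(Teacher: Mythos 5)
Your proof is correct and matches the approach the paper intends: the paper gives no explicit proof for this proposition (nor for \cref{prop:mapping_to_a_point_GW}) precisely because it reduces, as you say, to unwinding \cref{def:GW_with_tangency} with $\beta=0$ and substituting the formula for $[\RoM(X,\tau,0)]$ from \cref{thm:mapping_to_a_point_vfc}, together with the notational convention that capping against the Chern class implicitly caps with $[\oM(X,\tau,0)]$. (One small quibble: \cref{prop:pullback_cap} with $f=\id$ gives $a\cap(b\cap c)=b\cap(a\cap c)$ rather than associativity $(a\cup b)\cap c=a\cap(b\cap c)$; but no associativity is actually needed here, only the substitution and the stated convention.)
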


\subsection{Products}

\begin{proposition}
	In the setting of \cref{sec:products_vfc}, for any $\mathbf m=(m_i)_{i\in T_{\tau_1}}$, $\bZ=(Z_i)_{i\in T_{\tau_1}}$, $\mathbf n=(n_j)_{j\in T_{\tau_2}}$, $\bW=(W_j)_{j\in T_{\tau_2}}$ as in \cref{def:GW_with_tangency}, we have
	\[I^X_{\tau_1\sqcup\tau_2,\beta_1\sqcup\beta_2,\mathbf m\sqcup\mathbf n}(\bZ\sqcup\bW)=I^X_{\tau_1,\beta_1,\mathbf m}(\bZ)\times I^X_{\tau_2,\beta_2,\mathbf n}(\bW).\]
\end{proposition}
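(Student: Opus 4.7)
The plan is to follow the blueprint of the proof of \cref{prop:products_GW} verbatim, with one extra bookkeeping step at the beginning to identify the naive tangency data on the disjoint union $\tau_1 \sqcup \tau_2$ as an external product of the corresponding data on $\tau_1$ and $\tau_2$. Once that identification is in place, the external product of vfc's from \cref{thm:products_vfc}, combined with \cref{prop:properties_of_products}(\ref{prop:properties_of_products:compatibility_external_cap}) and (\ref{prop:properties_of_product:external_product_pushforward}), together with the fact that Poincaré duality is realized by a cap product and hence commutes with $\boxtimes$, will immediately give the result.

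First, I would unpack the geometry of the universal curve for the disjoint union. Since $\oMpre_{\tau_1\sqcup\tau_2} \simeq \oMpre_{\tau_1}\times\oMpre_{\tau_2}$, the universal curve $\oCpre_{\tau_1\sqcup\tau_2}$ is the disjoint union $(\oCpre_{\tau_1}\times\oMpre_{\tau_2})\sqcup(\oMpre_{\tau_1}\times\oCpre_{\tau_2})$, and for a tail $i\in T_{\tau_1}$ the section $s_i$ lands entirely in the first component. Consequently the thickening $(\oCpre_{\tau_1\sqcup\tau_2})_{(s_i^{m_i})}$ is canonically identified with $(\oCpre_{\tau_1})_{(s_i^{m_i})}\times\oMpre_{\tau_2}$. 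Passing to mapping spaces, this yields canonical equivalences
\[X_{i,\tau_1\sqcup\tau_2}^{m_i}\simeq X_{i,\tau_1}^{m_i}\times\oMpre_{\tau_2},\qquad Z_{i,\tau_1\sqcup\tau_2}^{m_i}\simeq Z_{i,\tau_1}^{m_i}\times\oMpre_{\tau_2},\]
under which $\zeta_i$ becomes $\zeta_i^{(1)}\times\id$, and similarly for tails in $T_{\tau_2}$. By the base change property of virtual fundamental classes (\cref{prop:vfc_base_change}) together with the smoothness of $\oMpre_{\tau_2}$, this gives
\[[Z_{i,\tau_1\sqcup\tau_2}^{m_i}]=[Z_{i,\tau_1}^{m_i}]\boxtimes[\oMpre_{\tau_2}].\]
Applying $\PD^{-1}\zeta_{i*}$ and pulling back along the evaluation map $\ev_i^{m_i}$, one obtains that the class $(\ev_i^{m_i})^*\PD^{-1}\zeta_{i*}[Z_{i,\tau_1\sqcup\tau_2}^{m_i}]$ is the external product of $(\ev_i^{m_i})^*\PD^{-1}\zeta_i^{(1)}{}_*[Z_{i,\tau_1}^{m_i}]$ on $\RoM(X,\tau_1,\beta_1)$ with the unit on $\RoM(X,\tau_2,\beta_2)$; an analogous statement holds for tails in $T_{\tau_2}$.

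Having established this, the rest of the computation is a verbatim transcription of the proof of \cref{prop:products_GW}. Using \cref{thm:products_vfc}, the vfc of $\RoM(X,\tau_1\sqcup\tau_2,\beta_1\sqcup\beta_2)$ splits as a $\boxtimes$. The big cup product of evaluation pullbacks also splits as a $\boxtimes$ of the two cup products by the previous paragraph. One then invokes \cref{prop:properties_of_products}(\ref{prop:properties_of_products:compatibility_external_cap}) to commute $\cap$ past $\boxtimes$, \cref{prop:properties_of_products}(\ref{prop:properties_of_product:external_product_pushforward}) to commute the stabilization pushforward $\st_*$ past $\boxtimes$, and once more \cref{prop:properties_of_products}(\ref{prop:properties_of_products:compatibility_external_cap}) to commute $\PD^{-1}$ past $\boxtimes$, yielding
\[I^X_{\tau_1\sqcup\tau_2,\beta_1\sqcup\beta_2,\mathbf m\sqcup\mathbf n}(\bZ\sqcup\bW)=I^X_{\tau_1,\beta_1,\mathbf m}(\bZ)\boxtimes I^X_{\tau_2,\beta_2,\mathbf n}(\bW).\]

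The main (and only nontrivial) obstacle is the bookkeeping step above: verifying that the naive tangency data on $\tau_1\sqcup\tau_2$ really is obtained from that on $\tau_1$ and $\tau_2$ by external product. This reduces to the elementary fact that the $s_i$-thickening on a disjoint union only modifies the component containing $s_i$, plus base change for the vfc. Everything else is formal and mirrors the proof of \cref{prop:products_GW} line by line.
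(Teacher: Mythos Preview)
The paper does not give an explicit proof of this proposition; it only remarks that the relations in \cref{sec:tangencies} are ``exactly parallel to the previous section,'' i.e., to \cref{prop:products_GW}. Your proposal correctly carries out that parallel argument and supplies the one genuinely new ingredient the paper leaves implicit---the verification that the tangency data $(\ev_i^{m_i})^*\PD^{-1}\zeta_{i*}[Z_{i,\tau_1\sqcup\tau_2}^{m_i}]$ factors as an external product via the identification $X_{i,\tau_1\sqcup\tau_2}^{m_i}\simeq X_{i,\tau_1}^{m_i}\times\oMpre_{\tau_2}$---after which the computation is indeed line-by-line identical to the proof of \cref{prop:products_GW}.
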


\subsection{Cutting edges}

\begin{proposition}
	In the setting of \cref{sec:cutting_edges_vfc}, for any $\mathbf m=(m_i)_{i\in T_\tau}$ and $\bZ=(Z_i)_{i\in T_\tau}$ as in \cref{def:GW_with_tangency}, let
	\[\mathbf m'\coloneqq\mathbf m\sqcup(m_v=1, m_w=1).\]
	We have
	\[d_*I^X_{\tau,\beta,\mathbf m}(\bZ)=I^X_{\sigma,\beta,\mathbf m'}(\Delta_*[X]\sqcup \bZ),\]
	where $d\colon\oM_\tau\to\oM_\sigma$ is induced by cutting the edge $e$ of $\tau$, and $I^X_{\sigma,\beta,\mathbf m'}(\Delta_*[X]\sqcup \bZ)$ means
	\[\PD^{-1}\st_*\bigg(\big((\ev_v\times\ev_w)^*\Delta_*[X]\big)\cup \Big(\underset{i\in T_\tau}{\cup}(\ev_i^{m_i})^*\PD^{-1}\zeta_{i*}[Z_{i,\tau}^{m_i}]\Big)\cap[\RoM(X,\tau,\beta)]\bigg).\]
\end{proposition}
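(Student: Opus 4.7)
The plan is to adapt the proof of \cref{prop:cutting_edges_GW}, the cutting-edges relation for tangency-free Gromov-Witten invariants, by replacing the ordinary evaluation pullbacks $(\ev_i)^*a_i$ with the tangency classes
\[\gamma_i^\tau \coloneqq (\ev_i^{m_i})^*\PD^{-1}\zeta_{i*}[Z_{i,\tau}^{m_i}]\]
on $\RoM(X,\tau,\beta)$, and analogously $\gamma_i^\sigma$ on $\RoM(X,\sigma,\beta)$.

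The one new input required is the compatibility of these tangency classes with the cut map $c\colon \RoM(X,\tau,\beta)\to\RoM(X,\sigma,\beta)$. Since $\oCpre_\tau$ is obtained from the fiber product $\oCpre_\sigma\times_{\oMpre_\sigma}\oMpre_\tau$ by identifying the sections $s_v$ and $s_w$, and this identification does not affect a Zariski neighborhood of any section $s_i$ for $i\in T_\tau$, we get natural isomorphisms of formal neighborhoods
\[(\oCpre_\tau)_{(s_i^{m_i})}\simeq(\oCpre_\sigma)_{(s_i^{m_i})}\times_{\oMpre_\sigma}\oMpre_\tau.\]
Passing to mapping stacks yields cartesian squares
\[\begin{tikzcd}
X_{i,\tau}^{m_i} \rar \dar & X_{i,\sigma}^{m_i} \dar \\
\oMpre_\tau \rar & \oMpre_\sigma
\end{tikzcd}\qquad\text{and}\qquad
\begin{tikzcd}
Z_{i,\tau}^{m_i} \rar \dar & Z_{i,\sigma}^{m_i} \dar \\
\oMpre_\tau \rar & \oMpre_\sigma,
\end{tikzcd}\]
compatible with $\zeta_i$. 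Combined with the evident factorization of $\ev_i^{m_i,\tau}$ through $\ev_i^{m_i,\sigma}\circ c$, the base-change for $\zeta_{i*}$ (\cref{prop:Gysin_pullback_base_change}) applied to the right-hand square gives $c^*\gamma_i^\sigma=\gamma_i^\tau$.

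With this established, the proof then runs exactly as in \cref{prop:cutting_edges_GW}. Starting from the definition of $d_*I^X_{\tau,\beta,\mathbf m}(\bZ)$, I would insert the trivial factor $\ev_e^*1_X$ into the cup product, then apply the identity $\st_\sigma\circ c=d\circ\st_\tau$ together with \cref{thm:cutting_edges_vfc} to rewrite $[\RoM(X,\tau,\beta)]=c^![\RoM(X,\sigma,\beta)]$. Next, use the pullback-cap compatibility \cref{prop:properties_of_products}(\ref{prop:properties_of_products:pullback_cap}) to pull $c^*(\cup_i\gamma_i^\sigma)$ through $c^!$, apply the projection formula \cref{prop:properties_of_products}(\ref{prop:properties_of_products:projection_formula_cap_Gysin}) to convert $c_*(\ev_e^*1_X\cap c^!(-))$ into $c_!(\ev_e^*1_X)\cap(-)$, and finally use \cref{prop:Gysin_pushforward_base_change} applied to the pullback square of \cref{sec:cutting_edges_vfc} to obtain $c_!(\ev_e^*1_X)=(\ev_v\times\ev_w)^*\Delta_!(1_X)$, which is identified with $(\ev_v\times\ev_w)^*\PD^{-1}\Delta_*[X]$ via Poincaré duality. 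Reassembling, the cup of the $\gamma_i^\sigma$ together with $(\ev_v\times\ev_w)^*\Delta_*[X]$ is capped against $[\RoM(X,\sigma,\beta)]$, yielding the right-hand side.

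The only step requiring genuine care is the jet compatibility described in the second paragraph; the rest is formal and parallel to the tangency-free case. I expect the main obstacle to lie in checking that the identification of the formal neighborhoods $(\oCpre_\tau)_{(s_i^{m_i})}$ as pullbacks from $\oCpre_\sigma$ is sufficiently natural at the derived level, so that the resulting maps of mapping stacks really do fit into \emph{strictly} cartesian squares; once this is in place the desired identity $c^*\gamma_i^\sigma=\gamma_i^\tau$ is a direct application of base change.
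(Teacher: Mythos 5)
Your proof is correct and is exactly what the paper implicitly intends: the paper omits proofs in \cref{sec:tangencies} with the remark that they are ``exactly parallel to the previous section,'' and your argument is precisely \cref{prop:cutting_edges_GW} with the ordinary evaluation pullbacks replaced by the tangency classes $\gamma_i$.

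Two small corrections. First, the citation for the jet-compatibility step should be \cref{prop:Gysin_pushforward_base_change}, not \cref{prop:Gysin_pullback_base_change}: writing $\PD^{-1}\zeta_{i*}[Z_{i,\tau}^{m_i}]=\zeta_{i!}(1)$, the identity you need is $q^*\zeta^\sigma_{i!}(1)=\zeta^\tau_{i!}(1)$ for the cartesian square of $Z$'s over $X$'s, which is base change between Gysin pushforward and star pullback. (\cref{prop:Gysin_pullback_base_change} relates star pushforward with Gysin \emph{pullback}, which is what you correctly invoke at the end for $c_!\ev_e^*1_X=(\ev_v\times\ev_w)^*\Delta_!1_X$; you should not invoke it in the middle as well.) Second, the ``jet compatibility'' step is actually simpler than your phrasing suggests: when $\sigma$ is obtained from $\tau$ by cutting an edge, the vertex data and hence the prestable moduli are literally the same, $\oMpre_\tau\simeq\oMpre_\sigma$, and the gluing which produces $\oCpre_\tau$ from $\oCpre_\sigma$ happens away from the remaining tail sections $s_i$, $i\in T_\tau$. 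So $X_{i,\tau}^{m_i}\simeq X_{i,\sigma}^{m_i}$, $Z_{i,\tau}^{m_i}\simeq Z_{i,\sigma}^{m_i}$, $\zeta_i^\tau\simeq\zeta_i^\sigma$, and $\ev_i^{m_i,\tau}=\ev_i^{m_i,\sigma}\circ c$ on the nose; the cartesian squares you set up are squares of isomorphisms, and the identity $c^*\gamma_i^\sigma=\gamma_i^\tau$ is immediate without needing any base-change proposition at all. The derived-level naturality worry you flag at the end therefore does not arise.
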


\subsection{Forgetting tails}

\begin{proposition}
	In the setting of \cref{sec:forgetting_tails_vfc}, for any $\mathbf m=(m_i)_{i\in T_\sigma}$ and $\bZ=(Z_i)_{i\in T_\sigma}$ as in \cref{def:GW_with_tangency}, and any positive integer $m_t$, let
	\[\mathbf m'\coloneqq\mathbf m\sqcup(m_t).\]
	We have	
	\[I^X_{\tau,\beta,\mathbf m'}(\bZ\sqcup(X))=\Phi^*I^X_{\sigma,\beta,\mathbf m}(\bZ).\]
\end{proposition}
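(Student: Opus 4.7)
The plan is to adapt the computation in the proof of \cref{prop:forgetting_tails_GW}, verifying along the way that the new tangency classes interact correctly with the forgetful map. The first step is to observe that the extra tangency condition imposed on the forgotten tail $t$, with $Z_t = X$, is trivial: $\zeta_t \colon Z_{t,\tau}^{m_t} \to X_{t,\tau}^{m_t}$ is the identity, and $X_{t,\tau}^{m_t}$ is smooth over $\oMpre_\tau$ which is smooth over $S = \Spec k$, so the virtual fundamental class $[X_{t,\tau}^{m_t}]$ equals the ordinary fundamental class. Poincaré duality therefore sends $\PD^{-1}\zeta_{t*}[X_{t,\tau}^{m_t}]$ to $1 \in \rH^0(X_{t,\tau}^{m_t}, \bbQ)$, whose pullback by $\ev_t^{m_t}$ is the unit; the cup factor for $t$ drops out of $I^X_{\tau,\beta,\mathbf m'}(\bZ \sqcup (X))$.

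The second step is to compare the tangency classes at the remaining tails $i \in T_\sigma$ between $\tau$ and $\sigma$. The forgetful map $\oMpre_\tau \to \oMpre_\sigma$ coincides with the universal curve $\oCpre_w \to \oMpre_\sigma$, so it pulls back the ideal sheaf $\cI_i$ cutting out the $i$-th section and yields cartesian squares
\[\begin{tikzcd}
Z_{i,\tau}^{m_i} \dar \rar & Z_{i,\sigma}^{m_i} \dar \\
X_{i,\tau}^{m_i} \rar{\varpi_i} \dar & X_{i,\sigma}^{m_i} \dar \\
\oMpre_\tau \rar & \oMpre_\sigma ,
\end{tikzcd}\]
together with the compatibility $\varpi_i \circ \ev_{i,\tau}^{m_i} = \ev_{i,\sigma}^{m_i} \circ \eta \circ \Psi$. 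Base change of virtual fundamental classes (\cref{prop:vfc_base_change}) combined with smooth base change of Poincaré duality then gives
\[(\ev_{i,\tau}^{m_i})^* \PD^{-1}\zeta_{i*}[Z_{i,\tau}^{m_i}] = (\eta \circ \Psi)^* (\ev_{i,\sigma}^{m_i})^* \PD^{-1}\zeta_{i*}[Z_{i,\sigma}^{m_i}].\]

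With these preparations, $I^X_{\tau,\beta,\mathbf m'}(\bZ \sqcup (X))$ takes the form $\PD^{-1}\st_{\tau*}\big((\eta\circ\Psi)^* \alpha \cap [\RoM(X,\tau,\beta)]\big)$ with $\alpha \coloneqq \bigcup_{i \in T_\sigma}(\ev_{i,\sigma}^{m_i})^* \PD^{-1}\zeta_{i*}[Z_{i,\sigma}^{m_i}]$, and the remaining chain of identities in the proof of \cref{prop:forgetting_tails_GW} applies verbatim: the projection formula \cref{prop:properties_of_products}(\ref{prop:properties_of_products:projection_formula_cap}) moves $\Psi_*$ outside the cap, \cref{thm:forgetting_tails_vfc} together with \cref{rem:Gysin} rewrites $\Psi_*[\RoM(X,\tau,\beta)]$ as $\eta^![\RoM(X,\sigma,\beta)]$, the compatibility \cref{prop:properties_of_products}(\ref{prop:properties_of_products:pullback_cap}) commutes the cap past $\eta^!$, and the base change \cref{prop:Gysin_pullback_base_change} delivers $\Phi^* I^X_{\sigma,\beta,\mathbf m}(\bZ)$. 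The main obstacle is the second step: identifying $X_{i,\tau}^{m_i}$ and $Z_{i,\tau}^{m_i}$ as derived base changes of their $\sigma$-counterparts, which reduces to the fact that forgetting a tail preserves the underlying prestable curves and thus the formal neighborhoods of the surviving sections; this is geometrically clear but deserves a careful verification using the explicit construction of the derived moduli stacks.
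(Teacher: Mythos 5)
The paper states this proposition (and the other propositions in \cref{sec:tangencies}) without a written proof, indicating only that the arguments are ``exactly parallel'' to those of \cref{sec:numerical_GW}, so there is no paper proof to compare against; I evaluate your argument on its own merits.

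Your overall strategy is correct and captures exactly what needs to be added on top of the proof of \cref{prop:forgetting_tails_GW}. Step 1 (trivialization of the cup factor at the forgotten tail $t$) is right: with $Z_t = X$, the map $\zeta_t$ is the identity, $X_{t,\tau}^{m_t}$ is smooth, the virtual fundamental class coincides with the ordinary one, and its Poincaré dual is $1$, so that factor drops out of the cup product. Step 2 correctly identifies the crux: one needs $(\ev_{i,\tau}^{m_i})^\ast\PD^{-1}\zeta_{i,\tau\ast}[Z_{i,\tau}^{m_i}] = (\eta\circ\Psi)^\ast(\ev_{i,\sigma}^{m_i})^\ast\PD^{-1}\zeta_{i,\sigma\ast}[Z_{i,\sigma}^{m_i}]$ in order to re-express the $\tau$-integrand as a pullback of the $\sigma$-integrand, after which the chain of equalities from \cref{prop:forgetting_tails_GW} really does apply verbatim. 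The geometric input — that forgetting a tail attached to $w$ leaves the universal curve and the surviving sections unchanged, so that $X_{i,\tau}^{m_i}$ and $Z_{i,\tau}^{m_i}$ are smooth base changes of $X_{i,\sigma}^{m_i}$ and $Z_{i,\sigma}^{m_i}$ along $\oMpre_\tau\to\oMpre_\sigma$, with $\varpi_i\circ\ev_{i,\tau}^{m_i}=\ev_{i,\sigma}^{m_i}\circ\eta\circ\Psi$ — is correct; since the forgetful map $\oMpre_\tau\to\oMpre_\sigma$ is the open complement of the sections and nodes inside $\oCpre_w$, it is smooth, so $\varpi_i$ is smooth.

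The one place where the write-up should be tightened is the attribution of the key identity in Step 2. You cite \cref{prop:vfc_base_change} ``combined with smooth base change of Poincaré duality,'' but the clean derivation goes through three of the paper's tools rather than one: writing $\alpha = \PD^{-1}\zeta_{i,\sigma\ast}[Z_{i,\sigma}^{m_i}]$, one has $\varpi_i^\ast(\alpha)\cap\varpi_i^!([X_{i,\sigma}^{m_i}]) = \varpi_i^!\big(\alpha\cap[X_{i,\sigma}^{m_i}]\big)$ by \cref{prop:properties_of_products}(\ref{prop:properties_of_products:pullback_cap}); then $\varpi_i^!\zeta_{i,\sigma\ast}[Z_{i,\sigma}^{m_i}] = \zeta_{i,\tau\ast}\varpi_Z^![Z_{i,\sigma}^{m_i}]$ by \cref{prop:Gysin_pullback_base_change}; and both $\varpi_i^![X_{i,\sigma}^{m_i}]=[X_{i,\tau}^{m_i}]$ and $\varpi_Z^![Z_{i,\sigma}^{m_i}]=[Z_{i,\tau}^{m_i}]$ follow from \cref{prop:vfc_functoriality} (these are absolute virtual fundamental classes and $\varpi_i$, $\varpi_Z$ are smooth), not from \cref{prop:vfc_base_change}, which concerns the base-change morphism for \emph{relative} classes. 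With that adjustment — and with the cartesian-square claims made explicit, which you already flag as deserving verification — the argument is complete and correct.
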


\subsection{Contracting edges}

\begin{proposition}
	In the setting of \cref{sec:contracting_edges_vfc}, for any $\mathbf m=(m_v)_{v\in T_{\sigma}}$ and $\bZ=(Z_v)_{v\in T_{\sigma}}$ as in \cref{def:GW_with_tangency}, we have
	\[\sum_j I^X_{\tau,\beta_j,\mathbf m}(\bZ)=\Phi^* I^X_{\sigma,\beta,\mathbf m}(\bZ).\]
\end{proposition}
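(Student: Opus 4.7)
The plan is to follow the template of the proof of \cref{prop:contracting_edges_GW}, incorporating the tangency classes. I would set up the same commutative diagram with $\Psi$, $\lambda$, $\eta$, $\mu$, $\xi$, $\rho$, $\kappa$, $\theta$ as in that proof, where all squares are derived pullbacks. Since contracting an edge does not affect the set of tails, we have $T_\tau = T_\sigma$, and the tangency data $\mathbf m$, $\bZ$ on the two sides coincide. For each tail $i$, the evaluation map $\ev_i^{m_i}$ on each $\RoM(X,\tau,\beta_j)$ should factor through the corresponding one on $\RoM(X,\sigma,\beta)$ via $\eta\circ\Psi$, modulo the natural identification between the $m_i$-th order neighborhoods of the $i$-th section of $\oCpre_\tau$ and of $\oCpre_\sigma$.

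With this compatibility established, the tangency cohomology class $(\ev_i^{m_i,\tau})^*\PD^{-1}\zeta_{i,\tau*}[Z_{i,\tau}^{m_i}]$ is identified with the pullback under $\eta\circ\Psi$ of the analogous class on $\RoM(X,\sigma,\beta)$. The rest of the argument then mirrors \cref{prop:contracting_edges_GW} step by step: apply the projection formula \cref{prop:properties_of_products}(\ref{prop:properties_of_products:projection_formula_cap}) to move the cup of pulled-back classes past $\Psi_*$; invoke \cref{thm:contracting_edges_vfc} to rewrite $\sum_j \Psi_*[\RoM(X,\tau,\beta_j)]$ as $\Phi^![\RoM(X,\sigma,\beta)]$; identify $\Phi^!$ with $\eta^!$ via \cref{rem:Gysin}; pull $\eta^!$ inside the cap product using \cref{prop:properties_of_products}(\ref{prop:properties_of_products:pullback_cap}); exchange $\mu_*\eta^!$ with $\Phi^!\kappa_*$ via \cref{prop:Gysin_pullback_base_change}; and finally convert $\PD^{-1}\Phi^!$ into $\Phi^*\PD^{-1}$ using the compatibility between Poincaré duality and the Gysin pullback along the smooth morphism $\Phi$.

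The main obstacle is establishing the compatibility of the tangency classes with the contracting-edges pullback diagram, i.e.\ showing that the formation of $X_{i,\tau}^{m_i}$ and of the virtual fundamental class $[Z_{i,\tau}^{m_i}]$ commute with the base change along $\Phi\colon \oM_\tau\to\oM_\sigma$. This reduces to the elementary observation that contracting the edge $e$ does not modify the universal curve near the $i$-th section for any tail $i$, so the $m_i$-th order thickening $(\oCpre_\tau)_{(s_i^{m_i})}$ is canonically the pullback of $(\oCpre_\sigma)_{(s_i^{m_i})}$. Consequently the lci morphism $\zeta_{i,\tau}$ is the base change of $\zeta_{i,\sigma}$, and by \cref{prop:vfc_base_change} the virtual fundamental class $[Z_{i,\tau}^{m_i}]$ is the pullback of $[Z_{i,\sigma}^{m_i}]$. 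Once this compatibility is granted, the remaining bookkeeping is identical to the tangency-free case in \cref{prop:contracting_edges_GW}, completing the proof.
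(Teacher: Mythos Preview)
Your proposal is correct and matches the paper's own approach: the paper omits the proof entirely, stating only that the relations in \cref{sec:tangencies} are ``exactly parallel to the previous section,'' and your outline is precisely that parallel---rerun the argument of \cref{prop:contracting_edges_GW} with the tangency classes in place of the ordinary evaluation classes. Your identification of the one genuinely new ingredient, namely the base-change compatibility of $X_{i,\tau}^{m_i}$, $Z_{i,\tau}^{m_i}$, and $[Z_{i,\tau}^{m_i}]$ along $\oMpre_\tau\to\oMpre_\sigma$ (via the fact that contracting $e$ leaves the neighborhoods of the tail sections untouched, together with \cref{prop:vfc_base_change}), is exactly what is needed and is not spelled out in the paper.
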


\bibliographystyle{plain}
\bibliography{dahema}

\end{document}